\theoremstyle{plain}
  \declaretheorem[numberwithin=section]{theorem}
  \declaretheorem[numberlike=theorem]{corollary}
  \declaretheorem[numberlike=theorem]{proposition}
  \declaretheorem[numberlike=theorem]{lemma}
  \declaretheorem[numberlike=theorem]{question}
\theoremstyle{definition}
  \declaretheorem[numberlike=theorem]{definition}
  \declaretheorem[numberlike=theorem]{example}
  \declaretheorem[numberlike=theorem]{remark}
\newenvironment{acknowledgements}{\bigskip\textbf{Acknowledgements.}}{}
\newcommand{\md}{\mathrm{d}}
\begin{document}

\title{Gauss congruences for rational functions in several variables}

\author[1]{Frits Beukers}
\author[1]{Marc Houben}
\author[2]{Armin Straub}
\affil[1]{Utrecht University}
\affil[2]{University of South Alabama}

\date{October 1, 2017}

\maketitle

\begin{abstract}
  We investigate necessary as well as sufficient conditions under which the
  Laurent series coefficients $f_{\boldsymbol{n}}$ associated to a multivariate
  rational function satisfy Gauss congruences, that is $f_{\boldsymbol{m}p^r}
  \equiv f_{\boldsymbol{m}p^{r - 1}}$ modulo $p^r$. For instance, we show that
  these congruences hold for certain determinants of logarithmic derivatives.
  As an application, we completely classify rational functions $P / Q$
  satisfying the Gauss congruences in the case that $Q$ is linear in each
  variable.
\end{abstract}

\section{Introduction}

We say that a sequence $(a_k)_{k \geq 0}$ of rational numbers satisfies
the {\emph{Gauss congruences}} for the prime $p$, if $a_k \in \mathbb{Z}_p$
(that is, the $a_k$ are $p$-adically integral) and
\begin{equation}
  a_{m p^r} \equiv a_{m p^{r - 1}} \quad (\operatorname{mod} p^r) \label{eq:gauss:1}
\end{equation}
for all integers $m \geq 0$ and $r \geq 1$. These congruences hold
for all primes if and only if
\begin{equation*}
  \sum_{d|m} \mu \left(\frac{m}{d} \right) a_d \equiv 0 \quad (\operatorname{mod}
   m),
\end{equation*}
where $\mu$ is the M{\"o}bius function, and they are named after the classical
congruences that hold in the case $a_k = \alpha^k$, with $\alpha \in
\mathbb{Z}$. We refer to \cite{zarelua-tr} and \cite{minton-cong} for a
survey of these and related congruences. Well-known examples of sequences
satisfying the Gauss congruences for all primes include the Lucas numbers
$L_n$ defined by $L_{n + 1} = L_n + L_{n - 1}$, with $L_0 = 2$, $L_1 = 1$, and
the Ap\'ery numbers
\begin{equation}
  A_n = \sum_{k = 0}^n \binom{n}{k}^2 \binom{n + k}{k}^2, \label{eq:apery}
\end{equation}
which featured in Ap\'ery's proof \cite{apery} of the irrationality of
$\zeta (3)$. In fact, as shown in \cite{beukers-apery85},
\cite{coster-sc}, the Ap\'ery numbers have the remarkable (and rare)
property of satisfying \eqref{eq:gauss:1} modulo $p^{3 r}$ if $p \geq 5$
(often referred to as a supercongruence).

In this paper, we consider the case of multivariate sequences
$(a_{\boldsymbol{k}})_{\boldsymbol{k} \in \mathbb{Z}^n}$. As in the univariate
case, these are said to satisfy the {\emph{Gauss congruences}} for the prime
$p$, if $a_{\boldsymbol{k}} \in \mathbb{Z}_p$ and $a_{\boldsymbol{m}p^r} \equiv
a_{\boldsymbol{m}p^{r - 1}} \pmod{p^r}$ for all $\boldsymbol{m}
\in \mathbb{Z}^n$ and all $r \geq 1$. Our particular focus is on the
case when the $a_{\boldsymbol{k}}$ are the coefficients of a Laurent series of a
rational function. As reviewed in Section~\ref{sec:laurent}, a rational
function $f = P / Q$ has Laurent series associated with each vertex of the
Newton polytope $N (Q)$ of $Q$. We show that Gauss congruences hold for one of
these Laurent series (for all but finitely many primes) if and only if they
hold for all Laurent series (Proposition~\ref{prop:PQv}), in which case we say
that $f$ has the {\emph{Gauss property}}.

As observed in \cite{s-apery}, the rational function
\begin{equation}
  \frac{1}{(1 - x_1 - x_2) (1 - x_3 - x_4) - x_1 x_2 x_3 x_4} =
  \sum_{\boldsymbol{k} \in \mathbb{Z}_{\geq 0}^4} A_{\boldsymbol{k}}
  \boldsymbol{x}^{\boldsymbol{k}}, \label{eq:apery:rat}
\end{equation}
where $\boldsymbol{x}^{\boldsymbol{k}}$ is short for $x_1^{k_1} x_2^{k_2}
x_3^{k_3} x_4^{k_4}$, has the Ap\'ery numbers \eqref{eq:apery} as its
diagonal coefficients, that is, $A_{n, n, n, n} = A_n$. Moreover, it is proved
in \cite{s-apery} that the supercongruences for the Ap\'ery numbers hold
for all coefficients $A_{\boldsymbol{n}}$, meaning that $A_{\boldsymbol{m}p^r}
\equiv A_{\boldsymbol{m}p^{r - 1}} \pmod{p^{3 r}}$ for all
primes $p \geq 5$. In particular, the rational function
\eqref{eq:apery:rat} has the Gauss property.

One of the goals of this paper is to address the question of which rational
functions have the Gauss property. Towards that end, we provide several
results that show that the Gauss property holds for large natural classes of
rational functions in several variables $\boldsymbol{x}= (x_1, x_2, \ldots,
x_n)$. For instance, in Section~\ref{sec:det}, we show that certain
determinants of logarithmic derivatives have the Gauss property. The following
is derived (as Theorem~\ref{thm:det:rat}) from a similar result for Laurent
series (Theorem~\ref{thm:det}). Because of its central character (indicated in
Question~\ref{q:det} below) and for future applications, the results of
Section~\ref{sec:det} are proved over more general rings, namely domains with
a Frobenius lift (Definition~\ref{def:frob}).

\begin{theorem}
  \label{thm:det:intro}Let $m \leq n$ and let $f_1, \ldots, f_m \in
  \mathbb{Q} (\boldsymbol{x})$ be nonzero. Then the rational function
  \begin{equation}
    \frac{x_1 \cdots x_m}{f_1 \cdots f_m} \det \left(\frac{\partial
    f_j}{\partial x_i} \right)_{i, j = 1, \ldots, m} \label{eq:det:intro}
  \end{equation}
  has the Gauss property.
\end{theorem}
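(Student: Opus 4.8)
The plan is to recast the rational function \eqref{eq:det:intro} in logarithmic-derivative form and then reduce, via Proposition~\ref{prop:PQv}, to the Laurent-series statement Theorem~\ref{thm:det}. Writing $\theta_i = x_i\,\partial/\partial x_i$ and absorbing $x_i$ into the $i$-th row and $1/f_j$ into the $j$-th column, multilinearity of the determinant gives
\begin{equation*}
  \frac{x_1\cdots x_m}{f_1\cdots f_m}\det\left(\frac{\partial f_j}{\partial x_i}\right)_{i,j}
  = \det\left(\frac{x_i}{f_j}\frac{\partial f_j}{\partial x_i}\right)_{i,j}
  = \det\left(\theta_i\log f_j\right)_{i,j},
\end{equation*}
so the object of interest is a determinant of logarithmic derivatives. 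By Proposition~\ref{prop:PQv} it suffices to establish the Gauss congruences for the Laurent coefficients at one vertex of the relevant Newton polytope; expanding each $f_j$ there and pulling out monomial factors, I would normalize so that each $f_j$ is a unit power series. Multiplying $f_j$ by a monomial $\boldsymbol{x}^{\boldsymbol c}$ only adds the constant $c_i$ to the entry $\theta_i\log f_j$, so this normalization costs at most constant columns, which are harmless for the Gauss property. This places us in the setting of Theorem~\ref{thm:det} over a domain $R$ with a Frobenius lift (Definition~\ref{def:frob}), and the remaining paragraphs sketch how that Laurent version is proved.

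The structural engine is the interaction between $\theta_i$ and the Frobenius lift. Extending $\phi$ to the Laurent ring by $\Phi(x_i)=x_i^p$, one has the commutation $\theta_i\Phi = p\,\Phi\theta_i$, and Dwork's lemma gives $\Phi(f_j)/f_j^{\,p}\in 1+pR[[\boldsymbol x]]$ for each unit $f_j$. Combining these through $\log\Phi(f_j)=p\log f_j+\log\!\big(\Phi(f_j)/f_j^{\,p}\big)$ yields
\begin{equation*}
  \Phi\!\left(\theta_i\log f_j\right) = \theta_i\log f_j + \theta_i w_j,
  \qquad w_j := \tfrac1p\log\!\big(\Phi(f_j)/f_j^{\,p}\big)\in R[[\boldsymbol x]],
\end{equation*}
and hence, since $\Phi$ is a ring homomorphism and the determinant is polynomial in its entries,
\begin{equation*}
  \Phi\big(\det(\theta_i\log f_j)\big) = \det\big(\theta_i\log f_j + \theta_i w_j\big).
\end{equation*}
This exact identity, expanded by multilinearity in the columns, is what ties the Frobenius action to the determinant, the correction columns all having the gradient form $(\theta_i w_j)_i$.

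To extract the Gauss congruences I would pass, as in Section~\ref{sec:laurent}, to the restriction of the determinant along each primitive ray, where the multivariate congruences reduce to the univariate case, and then run a Zarelua/Witt-type induction on $r$. Along a ray the coefficients can be expressed through power sums, i.e.\ traces, of the reciprocal roots of the $f_j$, which the Frobenius lift permutes; the trace-type congruences then propagate the relation $V^r\equiv V^{r-1}\pmod{p^r}$ (with $V$ the contraction $\boldsymbol{x}^{\boldsymbol n}\mapsto\boldsymbol{x}^{\boldsymbol n/p}$) through the identity above, the gradient form of the correction columns supplying the extra divisibility via $V\theta_i = p\,\theta_i V$.

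The main obstacle is precisely that the Gauss property does not hold term by term: the individual permutation products $\prod_i\theta_i\log f_{\sigma(i)}$ need not satisfy the congruences — already the square $(\theta_1\log(1-x_1))^2 = x_1^2/(1-x_1)^2$, with coefficients $n-1$, fails them — so the argument cannot be reduced to handling each summand separately. The cancellation among the $m!$ terms, encoded by the antisymmetry of the determinant and made explicit through the identity $\Phi(\det)=\det(\,\cdots+\theta_i w_j\,)$, is therefore doing essential work; getting the multilinear correction terms to assemble into the sharp congruence modulo $p^r$, rather than the merely integral bound that the commutation relations give entrywise, is the crux. The preliminary normalization to unit power series and the choice of vertex are routine but must be carried out so as not to disturb the determinant's Gauss property.
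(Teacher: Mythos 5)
Your outer reduction --- rewriting \eqref{eq:det:intro} as $\det(\theta_i\log f_j)$ and normalizing each $f_j$ at a vertex so as to invoke Theorem~\ref{thm:det} --- matches the paper's proof of Theorem~\ref{thm:det:rat}, up to two repairable omissions: for rational $f_j$ you first need the logarithmic property of the determinant in each argument to reduce to Laurent polynomials, and the vertices must be chosen coherently (the paper uses a linear form with $\mathbb{Q}$-linearly independent coefficients) so that all the normalized $f_j$ are supported on a \emph{single} proper cone.

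The genuine gap is in your treatment of Theorem~\ref{thm:det} itself. The identity $\Phi(\det(\theta_i\log f_j))=\det(\theta_i\log f_j+\theta_i w_j)$ is correct, but you never extract the congruence from it: the proposed ``restriction along each primitive ray'' reducing to a univariate trace computation does not go through, since the ray-restriction of a determinant of multivariate logarithmic derivatives is a diagonal-type extraction whose coefficients are not power sums of reciprocal roots of univariate polynomials; and your closing paragraph concedes that assembling the correction columns into the sharp congruence modulo $p^r$ is ``the crux'' without resolving it. The paper's resolution is a different decomposition that your sketch is missing: by Proposition~\ref{prop:product} each $f_j$ factors as an infinite product $\prod_{\boldsymbol{k}}(1-a^{(j)}_{\boldsymbol{k}}\boldsymbol{x}^{\boldsymbol{k}})$, and expanding $\frac{\md f_1}{f_1}\wedge\cdots\wedge\frac{\md f_n}{f_n}$ multilinearly over these factors writes $F$ as a sum indexed by $n$-tuples of exponent vectors $(\boldsymbol{k}^{(1)},\ldots,\boldsymbol{k}^{(n)})$ --- not by permutations --- and each such term \emph{does} individually satisfy the Gauss congruences: it equals $(-1)^n\det(K)\prod_j a_j\boldsymbol{x}^{\boldsymbol{k}^{(j)}}/(1-a_j\boldsymbol{x}^{\boldsymbol{k}^{(j)}})$ with $K=(k_i^{(j)})$, so its coefficients are $c_{K\boldsymbol{r}}=(-1)^n\det(K)\boldsymbol{a}^{\boldsymbol{r}}$, and the inequality $\nu_p(K\boldsymbol{r})\leq\nu_p(\det K)+\nu_p(\boldsymbol{r})$ combined with Lemma~\ref{lem:powers:frob} supplies exactly the $p$-adic gain you are looking for. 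Thus your (correct) observation that the permutation products fail the congruences term by term does not preclude a term-by-term argument; it is the determinant of the exponent matrix, not cancellation among the $m!$ permutation summands, that furnishes the extra divisibility.
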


It would be of considerable interest to fully characterize multivariate
rational functions with the Gauss property. Towards that end, one might be
tempted to ask the following question.

\begin{question}
  \label{q:det}Suppose that the rational function $f \in \mathbb{Q}
  (\boldsymbol{x})$ has the Gauss property. Can it be written as a
  $\mathbb{Q}$-linear combination of functions of the form
  \eqref{eq:det:intro}?
\end{question}

A recent result of Minton \cite{minton-cong} answers Question~\ref{q:det}
affirmatively when $n = 1$, the case of a single variable. For the benefit of
the reader and in order to be self-contained, we reprove this result, cast in
our present language, in Section~\ref{sec:minton}.

\begin{theorem}[Minton, 2014]
  A rational function $f \in \mathbb{Q} (x)$ has the Gauss property if and
  only if $f$ is a $\mathbb{Q}$-linear combination of functions of the form
  $x u' (x) / u (x)$, with $u \in \mathbb{Z} [x]$.
\end{theorem}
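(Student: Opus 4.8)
The plan is to prove both directions of Minton's characterization for single-variable rational functions.

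For the direction that functions of the form $x u'(x)/u(x)$ have the Gauss property, I would appeal directly to Theorem~\ref{thm:det:intro} in the case $n = m = 1$: taking $f_1 = u \in \mathbb{Z}[x]$, the expression \eqref{eq:det:intro} becomes exactly $x u'(x)/u(x)$. Since the Gauss property is preserved under $\mathbb{Q}$-linear combinations (the congruences \eqref{eq:gauss:1} are linear in the sequence, once one clears denominators appropriately and works prime by prime), any such linear combination also has the Gauss property.

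The substantive content is the converse: every $f \in \mathbb{Q}(x)$ with the Gauss property is a $\mathbb{Q}$-linear combination of logarithmic derivatives $x u'(x)/u(x)$. First I would reduce to understanding which building blocks are available: by partial fractions over $\overline{\mathbb{Q}}$, a rational function is a $\mathbb{Q}$-linear combination of terms $x u'(x)/u(x)$ precisely when it lies in a certain explicit subspace, and I would identify this subspace concretely. Writing $u(x) = \prod_j (1 - \alpha_j x)$, one computes $x u'(x)/u(x) = -\sum_j \alpha_j x/(1 - \alpha_j x) = \sum_j \sum_{k \geq 1} \alpha_j^k x^k$, so the coefficient sequence is $a_k = \sum_j \alpha_j^k$, a sum of powers (a \emph{power sum}). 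Thus the span of the logarithmic derivatives corresponds exactly to sequences that are $\mathbb{Q}$-linear combinations of power sums $\alpha \mapsto \alpha^k$ with $\alpha$ an algebraic integer, with no polynomial (i.e. eventually supported) or non-integral pieces. The goal therefore becomes: \emph{if the Laurent coefficients $a_k$ of $f$ satisfy the Gauss congruences for almost all primes, then $a_k$ is such a combination of integer-power-sums.}

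To establish this, I would analyze the general form of the coefficient sequence of a rational function: $a_k = \sum_i P_i(k)\, \beta_i^k$ for finitely many distinct $\beta_i \in \overline{\mathbb{Q}}^\times$ and polynomials $P_i$, valid for large $k$ (plus finitely many exceptional initial terms). The Gauss congruences, read through the M\"obius criterion $\sum_{d \mid m} \mu(m/d) a_d \equiv 0 \pmod{m}$, impose strong arithmetic constraints; the key step is to show these force every $P_i$ to be constant and every $\beta_i$ to be an algebraic integer (indeed, with the constants matching so that the whole thing is a rational power sum), while also ruling out any polynomial-in-$k$ contribution except one that can itself be absorbed. Here I expect the \textbf{main obstacle}: disentangling the congruence constraints on a mixed sum of the form $P_i(k)\beta_i^k$ to show no genuine polynomial factors $P_i$ of positive degree can survive (these would correspond to higher-order poles, which are not of logarithmic-derivative type) and that all $\beta_i$ must be integral. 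The natural tool is a Galois-theoretic / $p$-adic argument: fixing a prime $p$ and embedding into $\mathbb{C}_p$, the congruence $a_{mp^r} \equiv a_{mp^{r-1}} \pmod{p^r}$ for growing $r$ controls the $p$-adic behavior of $\sum_i P_i(mp^r)\beta_i^{mp^r}$, and one extracts integrality of the $\beta_i$ and vanishing of the non-constant $P_i$ by comparing Frobenius orbits and using that the congruences hold for infinitely many primes. Once the coefficient sequence is pinned down as a $\mathbb{Q}$-combination of power sums $k \mapsto \sum_j \alpha_j^k$ with the $\alpha_j$ algebraic integers, reassembling these into logarithmic derivatives $x u'(x)/u(x)$ with $u \in \mathbb{Z}[x]$ (choosing $u$ to be the relevant minimal polynomials, cleared of denominators) completes the proof.
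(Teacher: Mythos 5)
Your forward direction is fine and coincides with the paper's: it is the case $m=1$ of Theorem~\ref{thm:det:rat}, together with the observation that the congruences are linear. For the converse, however, what you have written is a plan rather than a proof. You correctly reduce to the exponential-polynomial form $a_k=\sum_i P_i(k)\beta_i^k$ of the coefficients (equivalently, to partial fractions), but the two steps that carry all of the content --- ruling out nonconstant $P_i$, i.e.\ higher-order poles, and pinning down the coefficients of the surviving geometric terms --- are explicitly labelled in your write-up as ``the main obstacle'' and described only as ``a Galois-theoretic / $p$-adic argument'' to be found. That is exactly where the work lies. The paper's proof of Theorem~\ref{thm:minton} supplies it as follows: for all sufficiently large $p$ one computes $U_p\bigl(1/(1-\alpha x)^j\bigr)\equiv 1/(1-\alpha^p x)\pmod{p}$ using $\binom{pk+j-1}{j-1}\equiv 1\pmod{p}$ for $j\le p$, so $U_p(P/Q)$ has only simple poles modulo $p$, and comparison with $U_p(P/Q)\equiv P/Q\pmod{p}$ eliminates multiple poles; then, writing $f_k=\sum_i A_i\alpha_i^k$ and using $f_k\equiv f_k^p\pmod{p}$ together with uniqueness of this representation modulo $p$, one gets $A_i^p\equiv A_i\pmod{p}$ for almost all $p$, whence $A_i\in\mathbb{Q}$ by Frobenius density; finally Galois invariance of $P/Q$ forces the $A_i$ to be constant on Galois orbits, and each orbit sum is a logarithmic derivative. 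None of these three arguments appears in your proposal, so the converse is not established.

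There is also a concrete error in your identification of the target subspace. You assert that the span of the functions $xu'(x)/u(x)$ corresponds to $\mathbb{Q}$-combinations of power sums $\sum_j\alpha_j^k$ with the $\alpha_j$ \emph{algebraic integers}, and accordingly you set as a goal to extract integrality of the $\beta_i$ from the congruences. This is false and would derail the argument: for $u=2-x\in\mathbb{Z}[x]$ one has $xu'(x)/u(x)=-x/(2-x)$ with coefficients $-2^{-k}$, which satisfies the Gauss congruences for every odd prime (by Fermat and its prime-power refinement, Lemma~\ref{lem:powers:frob}), yet $1/2$ is not an algebraic integer. The conditions actually forced by the Gauss property are only that the poles be simple and that the residues be rational and constant on Galois orbits; the resulting $u$ lies a priori in $\mathbb{Q}[x]$ and can be rescaled into $\mathbb{Z}[x]$ without changing $xu'/u$. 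Integrality of the reciprocal roots is neither needed nor true, so you should drop that step and instead supply the missing arguments above.
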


Although characterizing multivariate rational functions with the Gauss
property remains an open challenge, we obtain, in Section~\ref{sec:linear},
the following concise classification in the case of rational functions $P /
Q$, for which the Newton polytope $N (Q)$ of the denominator is contained in
$\{ 0, 1 \}^n$. Note that, in that case, the vertices of $N (Q)$ equal the
support of $Q$.

\begin{theorem}
  \label{thm:linear:intro}Let $P, Q \in \mathbb{Z} [\boldsymbol{x}]$ and
  suppose that $Q$ is linear in each variable. Then $P / Q$ has the Gauss
  property if and only if $N (P) \subseteq N (Q)$.
\end{theorem}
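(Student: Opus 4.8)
The plan is to prove both implications by reducing everything to the monomials $\boldsymbol{x}^{\boldsymbol{a}}$ with $\boldsymbol{a}\in\operatorname{supp}(Q)$, using that the Gauss property is preserved under $\mathbb{Q}$-linear combinations. Write $\theta_i=x_i\,\partial/\partial x_i$ and, for $S\subseteq\{1,\dots,n\}$, $\theta_S=\prod_{i\in S}\theta_i$. Since $Q$ is linear in each variable, $N(Q)\subseteq\{0,1\}^n$, so the lattice points of $N(Q)$ are exactly its vertices, which equal $\operatorname{supp}(Q)$; hence the condition $N(P)\subseteq N(Q)$ is equivalent to $\operatorname{supp}(P)\subseteq\operatorname{supp}(Q)$, and it suffices to treat $P=\boldsymbol{x}^{\boldsymbol{a}}$ for $\boldsymbol{a}\in\operatorname{supp}(Q)$.

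For sufficiency, the key observation is that $\theta_S Q/Q$ has the Gauss property for every $S$. Writing $Q=\sum_T c_T\boldsymbol{x}^T$ (sum over $T\subseteq\{1,\dots,n\}$) and using multilinearity, one has $\theta_i\boldsymbol{x}^T=[i\in T]\,\boldsymbol{x}^T$, so $\theta_S Q=\sum_{T\supseteq S}c_T\boldsymbol{x}^T$. To see each such ratio is Gaussian, I would apply Theorem~\ref{thm:det:intro} after permuting variables so that $S=\{1,\dots,k\}$, taking $m=k$ and $f_j=\partial_1\cdots\partial_{j-1}Q$ (so $f_1=Q$); multilinearity forces $\partial_a f_j=0$ whenever $a<j$, so the Jacobian $(\partial_a f_j)$ is lower triangular and its entries telescope against $f_1\cdots f_k$, yielding precisely $x_1\cdots x_k\,\partial_1\cdots\partial_k Q/Q=\theta_S Q/Q$ (and when $\theta_S Q=0$ the ratio is $0$, trivially Gaussian; nonvanishing of $\theta_S Q$ guarantees all intermediate $f_j\neq 0$). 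With these in hand, Möbius inversion over the Boolean lattice gives
\[
  c_S\,\frac{\boldsymbol{x}^S}{Q}=\sum_{T\supseteq S}(-1)^{|T|-|S|}\,\frac{\theta_T Q}{Q},
\]
a $\mathbb{Q}$-linear combination of functions with the Gauss property; dividing by $c_S\neq 0$ for $S\in\operatorname{supp}(Q)$ shows $\boldsymbol{x}^S/Q$, and therefore $P/Q$, has the Gauss property.

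For necessity, suppose $\operatorname{supp}(P)\not\subseteq\operatorname{supp}(Q)$, so some monomial of $P$ lies outside $N(Q)$. By a separating hyperplane together with a small perturbation, choose $\boldsymbol{w}\in\mathbb{Z}^n$ whose form $\langle\boldsymbol{w},\cdot\rangle$ is maximized over $N(Q)$ at a unique vertex $\boldsymbol{v}$ while $\max_{\boldsymbol{b}'\in\operatorname{supp}(P)}\langle\boldsymbol{w},\boldsymbol{b}'\rangle>\langle\boldsymbol{w},\boldsymbol{v}\rangle$. By Proposition~\ref{prop:PQv} I may work with the Laurent expansion at $\boldsymbol{v}$: writing $Q=c_{\boldsymbol{v}}\boldsymbol{x}^{\boldsymbol{v}}(1-\tilde{Q})$, every exponent occurring in $\tilde{Q}$ has strictly negative $\boldsymbol{w}$-weight, so $P/Q=c_{\boldsymbol{v}}^{-1}\boldsymbol{x}^{-\boldsymbol{v}}P\sum_{r\geq0}\tilde{Q}^r$. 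Letting $\boldsymbol{b}$ be a $\boldsymbol{w}$-maximal monomial of $P$, a weight count shows that the coefficient at the exponent $\boldsymbol{b}-\boldsymbol{v}$ is isolated and equals $c_{\boldsymbol{v}}^{-1}$ times the (nonzero) coefficient of $\boldsymbol{x}^{\boldsymbol{b}}$ in $P$, whereas the coefficient at $p(\boldsymbol{b}-\boldsymbol{v})$ vanishes, since its $\boldsymbol{w}$-weight strictly exceeds that of every exponent occurring in the series. The Gauss congruence with $r=1$ and $\boldsymbol{m}=\boldsymbol{b}-\boldsymbol{v}$ then forces this fixed nonzero rational number to be divisible by $p$ for all but finitely many primes $p$, a contradiction.

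I expect the main obstacle to be the necessity direction: isolating a single extremal Laurent coefficient that produces a genuine congruence violation requires the careful polyhedral choice of $\boldsymbol{w}$ and the weight bookkeeping at the vertex $\boldsymbol{v}$. On the sufficiency side the delicate point is recognizing that the triangular specialization $f_j=\partial_1\cdots\partial_{j-1}Q$ collapses Theorem~\ref{thm:det:intro} to $\theta_S Q/Q$, after which Möbius inversion is routine.
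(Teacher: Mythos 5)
Your proof is correct and follows essentially the same route as the paper: the triangular specialization $f_j=\partial_1\cdots\partial_{j-1}Q$ combined with Boolean M\"obius inversion is exactly the content of Proposition~\ref{prop:Qdet} (in the degenerate case $r=0$ with no $\boldsymbol{y}$-variables), and your extremal-weight argument for necessity is the proof of Proposition~\ref{prop:NPNQ}. The only difference is that you inline these specializations directly rather than passing through Theorem~\ref{thm:mostlylinear} and Minton's theorem, which trivialize when there is no extra variable $z$.
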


As illustrated by \eqref{eq:apery:rat}, such results for (multivariate)
rational functions allow us to establish congruences for numbers, such as the
Ap\'ery numbers, whose generating function is much more complicated than a
rational function. Indeed, observe that Theorem~\ref{thm:linear:intro}
immediately implies that the rational function \eqref{eq:apery:rat} has the
Gauss property. In particular, it follows that the Ap\'ery numbers satisfy
the Gauss congruences. In a similar spirit, recent results of Rowland and
Yassawi \cite{ry-diag13} show that the series coefficients of certain
rational functions satisfy Lucas congruences. Their approach using Cartier
operators can also be applied to provide an alternative proof (at least in
parts) of the ``if'' portion of Theorem~\ref{thm:linear:intro} (due to the
technicalities involved, we do not pursue this path here).

We obtain Theorem~\ref{thm:linear:intro} as an immediate consequence of the
following more general result, which we prove in Section~\ref{sec:linear} as
an application of Theorem~\ref{thm:det:intro}.

\begin{theorem}
  \label{thm:mostlylinear:intro}Let $P, Q \in \mathbb{Z} [z, \boldsymbol{x}]$
  such that $Q$ is linear in the variables $x_1, \ldots, x_n$. Write $P =
  \sum_{\boldsymbol{k}} p_{\boldsymbol{k}} (z) \boldsymbol{x}^{\boldsymbol{k}}$ and $Q
  = \sum_{\boldsymbol{k}} q_{\boldsymbol{k}} (z) \boldsymbol{x}^{\boldsymbol{k}}$ with
  $p_{\boldsymbol{k}}, q_{\boldsymbol{k}} \in \mathbb{Z} [z]$. Then $P / Q$ has
  the Gauss property if and only if $p_{\boldsymbol{k}} \neq 0$ implies
  $q_{\boldsymbol{k}} \neq 0$ and $p_{\boldsymbol{k}} / q_{\boldsymbol{k}}$ has the
  Gauss property for all $\boldsymbol{k}$ with $q_{\boldsymbol{k}} \neq 0$.
\end{theorem}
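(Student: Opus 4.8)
The plan is to prove both implications by reducing to Theorem~\ref{thm:det:intro} and to the univariate theorem of Minton stated above, after fixing notation that exposes the multilinear structure of $Q$. Throughout I treat $z = x_0$ as one more variable, so that Theorem~\ref{thm:det:intro} is available for the full tuple $(z, x_1, \dots, x_n)$; I abbreviate $\delta_i = x_i\,\partial/\partial x_i$ (so $\delta_z = z\,\partial/\partial z$) and write $\delta_i \log f := (\delta_i f)/f$, so that the function \eqref{eq:det:intro} is exactly $\det(\delta_i \log f_j)$. Since $Q$ is linear in each $x_i$, I index the $\boldsymbol{x}$-monomials by subsets $S \subseteq \{1,\dots,n\}$, writing $q_S = q_{\mathbf{1}_S}$ and $p_S = p_{\mathbf{1}_S}$.

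For the ``only if'' direction, assume $P/Q$ has the Gauss property. The condition $p_{\boldsymbol{k}} \neq 0 \Rightarrow q_{\boldsymbol{k}} \neq 0$ I would obtain from the necessary condition $N(P) \subseteq N(Q)$ for the Gauss property (reviewed in Section~\ref{sec:laurent}): since $N(Q) \subseteq \{0,1\}^n$, a one-line convexity argument shows that a $0/1$-point is a convex combination of $0/1$-points only trivially, so the lattice points of $N(Q)$ in $\{0,1\}^n$ are exactly its vertices, i.e.\ the support of $Q$; hence any $\boldsymbol{k}$ with $p_{\boldsymbol{k}} \neq 0$ already lies in the support of $Q$. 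To obtain the univariate congruences, I fix $S$ with $q_S \neq 0$ and expand $P/Q$ into the Laurent series attached to the vertex $\mathbf{1}_S$ of $N(Q)$ (by Proposition~\ref{prop:PQv} this series again has the Gauss property). Because $\mathbf{1}_S$ is a vertex, its supporting cone is pointed, and a direct inspection shows that the coefficient of $\boldsymbol{x}^{\boldsymbol{0}}$ (the part independent of $x_1, \dots, x_n$) is exactly $p_S(z)/q_S(z)$. Restricting the multivariate Gauss congruences to the $\boldsymbol{x}$-index $\boldsymbol{0}$ then yields precisely the univariate Gauss congruences for $p_S/q_S$, finishing this direction.

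For the ``if'' direction I would prove the stronger statement that $P/Q$ is a $\mathbb{Q}$-linear combination of functions of the form \eqref{eq:det:intro} in $(z, x_1, \dots, x_n)$; since each such function has the Gauss property by Theorem~\ref{thm:det:intro} and the Gauss property is preserved under $\mathbb{Q}$-linear combinations, this suffices. I argue by induction on $n$. The base case $n = 0$ is exactly Minton's theorem, which writes the univariate Gauss function $p_{\emptyset}/q_{\emptyset}$ as a combination of terms $z\,u'/u = \delta_z \log u$. For the inductive step I split off the last variable: writing $Q = A + x_n B$ and $P = C + x_n D$ with $A, B, C, D$ free of $x_n$ (and reducing the degenerate cases $A = 0$ or $B = 0$ to smaller $n$ by cancelling $x_n$), one has
\[
  \frac{P}{Q} = \frac{C}{A} + \Bigl(\frac{D}{B} - \frac{C}{A}\Bigr)\,\delta_n \log Q,
\]
which follows from $\delta_n \log Q = x_n B / Q$ and $A/Q = 1 - \delta_n \log Q$. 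The coefficient quotients of $C/A$ and of $D/B$ are among the $p_S/q_S$, so both satisfy the hypotheses of the theorem in the $n-1$ variables $x_1, \dots, x_{n-1}$; by the induction hypothesis each is a $\mathbb{Q}$-linear combination of functions \eqref{eq:det:intro}, hence so is $g := D/B - C/A$.

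It remains to treat the product $g \cdot \delta_n \log Q$, which is the crux, since the Gauss property is not multiplicative in general. Here I would use a bordering observation: if $g_\alpha = \det(\delta_{y_i} \log f_j)_{i,j \le m}$ with $y_i \in \{z, x_1, \dots, x_{n-1}\}$ and all $f_j$ free of $x_n$, then adjoining the row indexed by $x_n$ and the column indexed by $Q$ yields a matrix whose last row is $(0, \dots, 0, \delta_n \log Q)$, so its determinant equals $g_\alpha \cdot \delta_n \log Q$ and is again of the form \eqref{eq:det:intro}, now in $m + 1 \le n + 1$ of the variables $(z, x_1, \dots, x_n)$. Applying this term by term to the expansion of $g$ shows that $g \cdot \delta_n \log Q$, and therefore $P/Q$, is a $\mathbb{Q}$-linear combination of functions \eqref{eq:det:intro}, closing the induction. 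The main obstacle is exactly this multiplicativity issue; carrying the strengthened induction hypothesis (a representation by determinants, not merely the Gauss property) is what makes the bordering trick applicable and is the one place where the argument could otherwise break down.
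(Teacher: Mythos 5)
Your proof is correct, and while the ``only if'' half is essentially the paper's argument --- Proposition~\ref{prop:NPNQ} plus the observation that every $0/1$-point of $N(Q)$ is a vertex gives $p_{\boldsymbol{k}}\neq 0\Rightarrow q_{\boldsymbol{k}}\neq 0$, and your extraction of the $\boldsymbol{x}^{\boldsymbol{0}}$-part of the expansion at the vertex over $\mathbf{1}_S$ is exactly the content of Proposition~\ref{prop:PQF} applied to the face lying over $\mathbf{1}_S$ --- the ``if'' half takes a genuinely different route. The paper fixes one monomial $\boldsymbol{k}$ at a time, writes $p_{\boldsymbol{k}}/q_{\boldsymbol{k}}=\sum_j c_j z u_j'/u_j$ by Theorem~\ref{thm:minton}, and invokes Proposition~\ref{prop:Qdet}, whose proof rests on the identities $q_{\boldsymbol{k}}\boldsymbol{x}^{\boldsymbol{k}}=[\prod_j\theta_j^{k_j}(1-\theta_j)^{1-k_j}]Q$ and $\theta_1\cdots\theta_\ell Q/Q=\prod_j(\theta_j g_j)/g_j$ with $g_j=\partial_{j-1}\cdots\partial_1 Q$; Theorem~\ref{thm:det} then applies to $g_1,\ldots,g_\ell,f_1,\ldots,f_r$ because each $g_j$ is free of $x_1,\ldots,x_{j-1}$. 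You instead induct on $n$ via the identity $P/Q=C/A+(D/B-C/A)\,(\theta_n Q)/Q$ and multiply determinant representations by $(\theta_n Q)/Q$ through bordering. The underlying mechanism is the same --- linearity of $Q$ in a variable forces a zero row and a block-triangular determinant, and the paper's use of Theorem~\ref{thm:det} inside Proposition~\ref{prop:Qdet} is precisely your bordering trick performed for all $\ell$ rows at once --- but the decompositions are organized differently. Your version makes the affirmative answer to Question~\ref{q:det} for such $Q$ completely explicit (the paper only remarks that its proof yields this), whereas the paper's Proposition~\ref{prop:Qdet} is a statement over rings with a Frobenius lift, for $Q$ linear in only some of the variables, which is reusable elsewhere. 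Two points to tidy up: your base case needs constants (the empty, $m=0$ determinant) to count among the functions \eqref{eq:det:intro} in order to absorb the term $f(0)$ from Theorem~\ref{thm:minton}; and the final passage from ``$\mathbb{Q}$-linear combination of determinants'' to ``Gauss property'' requires expanding all summands with respect to a common proper cone so that the Laurent series can be added, exactly as arranged in the proof of Theorem~\ref{thm:det:rat}.
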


Our proof of Theorem~\ref{thm:mostlylinear:intro} answers Question~\ref{q:det}
affirmatively for rational functions $f = P / Q$, in the case that $Q$ is
linear in all but one variable. We further show in Example~\ref{eg:q:degree2},
that the answer to Question~\ref{q:det} is affirmative in the case that $Q$ is
a function of two variables and total degree $2$.

Although, in general, Question~\ref{q:det} remains far from being answered, we
can give a number of necessary conditions for the Gauss property to hold. A
simple such condition, proved in Proposition~\ref{prop:NPNQ}, is that the
Newton polytope $N (P)$ of $P$ must be contained in $N (Q)$. As another
example, made precise in Proposition~\ref{prop:PQF}, consider a face $F$ of $N
(Q)$ and let $P_F, Q_F$ be the restrictions of $P, Q$ consisting of those
monomials supported on $F$. If $P / Q$ has the Gauss property, then the same
holds for $P_F / Q_F$. In Proposition~\ref{prop:toroidal}, we prove the
straightforward observation that toroidal substitutions preserve the Gauss
property. As a consequence, illustrated in Example~\ref{eg:toroidal:rat}, the
rational function $P_F / Q_F$ can be reduced to a rational function in
essentially fewer than $n$ variables.

Finally, let us indicate a useful consequence concerning arbitrary
substitutions of an affirmative answer to Question~\ref{q:det}. Suppose that
$f \in \mathbb{Q} (\boldsymbol{x})$ is a $\mathbb{Q}$-linear combination of
functions of the form \eqref{eq:det:intro}, and let $g_1, \ldots, g_n \in
\mathbb{Q} (\boldsymbol{x})$ be nonzero. Then, by the multivariate chain rule,
\begin{equation}
  \frac{x_1 \cdots x_n}{g_1 \cdots g_n} \det \left(\frac{\partial
  g_j}{\partial x_i} \right)_{i, j = 1, \ldots, n} f (g_1 (\boldsymbol{x}),
  \ldots, g_n (\boldsymbol{x})) \label{eq:msubst:rat:intro}
\end{equation}
also is a $\mathbb{Q}$-linear combination of functions of the form
\eqref{eq:det:intro}. In particular, by Theorem~\ref{thm:det:intro}, the
rational function \eqref{eq:msubst:rat:intro} has the Gauss property. Hence,
if Question~\ref{q:det} has an affirmative answer, then it follows that, for
any rational function $f \in \mathbb{Q} (\boldsymbol{x})$ with the Gauss
property, the rational function \eqref{eq:msubst:rat:intro} has the Gauss
property as well.

Since Question~\ref{q:det} remains open, we give a direct and independent
proof of the following univariate version in Section~\ref{sec:subst}.

\begin{theorem}
  \label{thm:subst:rat:intro}Let $g_j \in \mathbb{Q} (x)$ be nonzero. If the
  rational function $f \in \mathbb{Q} (\boldsymbol{x})$ has the Gauss property,
  then so does the rational function
  \begin{equation}
    \left(\prod_{j = 1}^n \frac{x_j g'_j (x_j)}{g_j (x_j)} \right) f (g_1
    (x_1), \ldots, g_n (x_n)) . \label{eq:subst:rat:intro}
  \end{equation}
\end{theorem}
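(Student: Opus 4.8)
The plan is to peel off one variable at a time. Writing $\boldsymbol x=(x_1,\boldsymbol x')$ with $\boldsymbol x'=(x_2,\dots,x_n)$, it suffices to prove the one-variable statement: if $F\in\mathbb Q(\boldsymbol x)$ has the Gauss property and $g\in\mathbb Q(x_1)$ is nonzero, then
\[
  G(\boldsymbol x) := \frac{x_1\,g'(x_1)}{g(x_1)}\,F\big(g(x_1),\boldsymbol x'\big)
\]
has the Gauss property; applying this successively for $x_1,\dots,x_n$ and accumulating the prefactors then yields \eqref{eq:subst:rat:intro}, since the factor introduced at stage $j$ involves only $x_j$ and is untouched by the later substitutions. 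After fixing a vertex of the relevant Newton polytope (Section~\ref{sec:laurent}) and discarding the finitely many bad primes, I may assume that all series in question are $p$-integral and, using Proposition~\ref{prop:toroidal}, that $g$ is expanded so that the composition $F(g(x_1),\boldsymbol x')$ is a well-defined Laurent series; by Proposition~\ref{prop:PQv} it is enough to treat one such expansion.

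For the core estimate, let $U$ denote the coefficient map $(UH)_{\boldsymbol m}=H_{p\boldsymbol m}$ and $V$ the Frobenius $H(\boldsymbol x)\mapsto H(\boldsymbol x^p)$, so that the Gauss property reads $U^rH\equiv U^{r-1}H\pmod{p^r}$ for all $r\ge1$. Expanding $F(\boldsymbol x)=\sum_k x_1^k F_k(\boldsymbol x')$ gives $G=\sum_k F_k(\boldsymbol x')\,x_1 g'(x_1)g(x_1)^{k-1}$. The two identities I would use are the exactness relation $U\theta=p\,\theta U$ for $\theta=x_1\,\partial/\partial x_1$ (so $U\theta\equiv0\pmod p$) and the exact projection formula $U(\alpha\cdot V\beta)=(U\alpha)\,\beta$. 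Since $x_1g'g^{k-1}=\tfrac1k\theta(g^k)$ when $p\nmid k$, such terms satisfy $U(x_1g'g^{k-1})\equiv0\pmod p$; while for $k=pk_1$ I instead write $x_1g'g^{pk_1-1}=\tfrac{x_1g'}{g}\,(g^p)^{k_1}$ and use $g^p\equiv g(x_1^p)=V(g)\pmod p$ together with the projection formula to obtain $U(x_1g'g^{pk_1-1})\equiv U(\tfrac{x_1g'}{g})\,g^{k_1}\pmod p$. Summing over $k$, invoking the first Gauss congruence of $f$ to replace $F_{pk_1}$ by $F_{k_1}$ modulo $p$, and invoking the Gauss property of $\tfrac{x_1g'}{g}$ (which holds by Theorem~\ref{thm:det:intro} with $m=1$, giving $U(\tfrac{x_1g'}{g})\equiv\tfrac{x_1g'}{g}\pmod p$) collapses the sum to $UG\equiv\tfrac{x_1g'}{g}\,F(g(x_1),\boldsymbol x')=G\pmod p$. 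This settles the case $r=1$.

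For general $r$ I would run an induction that upgrades each of the three mod-$p$ inputs to the appropriate power of $p$: the congruence $g^{p^s}\equiv g(x_1^{p^s})\pmod{p^s}$ supplied by the Frobenius lift (Definition~\ref{def:frob}), the full Gauss property $F_{\boldsymbol m p^s}\equiv F_{\boldsymbol m p^{s-1}}\pmod{p^s}$ of $f$, and the full Gauss property of $\tfrac{x_1g'}{g}$. Feeding these into the decomposition of $U^rG$ and telescoping via $U^rG-U^{r-1}G=U^{r-1}(U-1)G$, the error terms should be governed by products of $p$-adic valuations summing to at least $r$. I expect the main obstacle to be exactly this bookkeeping: tracking, uniformly in the summation index $k$, how the corrections from $g^{p^s}-g(x_1^{p^s})$, from the $V$-twisted powers $(g^p)^{k_1}$, and from the Gauss defects of $f$ and of $\tfrac{x_1g'}{g}$ combine under iterated application of $U$, so as to guarantee $U^rG\equiv U^{r-1}G\pmod{p^r}$. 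Once the one-variable substitution is established, the iteration described in the first paragraph completes the proof of Theorem~\ref{thm:subst:rat:intro}.
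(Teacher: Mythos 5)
Your reduction to a single substitution $x_1\mapsto g(x_1)$ and the iteration over the variables is sound and matches the paper's Corollary~\ref{cor:subst:rat}, and your mod-$p$ computation with $U\theta=p\,\theta U$ and $U(\alpha\cdot V\beta)=(U\alpha)\beta$ is a correct (and rather clean) proof of the case $r=1$. But there are two genuine gaps. First, and most importantly, the case $r\geq 2$ is the actual content of the theorem, and you leave it as ``bookkeeping'' that you expect to work out. The induction on $r$ you sketch is not innocuous: after one application of $U$ you have replaced $G$ by a sum of terms involving $U_1(x_1g'/g)$, the twisted powers $(g^p)^{k_1}$, and error terms divisible only by various powers of $p$ depending on $\nu_p(k)$ and $\nu_p(\boldsymbol{k}')$, and applying $U^{r-1}$ to these does not obviously reproduce the same structure with improved valuations. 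The paper avoids any induction on $r$: in Theorem~\ref{thm:subst} it splits $F=F_1+F_2$ according to the difference $f_{\ell,\boldsymbol{k}}-f_{\ell/p,\boldsymbol{k}/p}$, shows that $F_2$ telescopes to $F(z^p,\boldsymbol{x}^p)$ plus an error whose $z^L$-coefficient is $L\,h_L$ (hence divisible by $p^{\nu_p(L)}$), and controls $F_1$ by the single inequality
\begin{equation*}
  \nu_p(C_L)\;\geq\;\min(\nu_p(\ell),\nu_p(\boldsymbol{k}))-\nu_p(\ell)+\max(\nu_p(\ell),\nu_p(L))\;\geq\;\min(\nu_p(L),\nu_p(\boldsymbol{k})),
\end{equation*}
which yields all Gauss congruences (every $r$) at once because the coefficient of $z^L\boldsymbol{x}^{\boldsymbol{k}}$ in $F-F(z^p,\boldsymbol{x}^p)$ is divisible by $p^{\min(\nu_p(L),\nu_p(\boldsymbol{k}))}$. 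To complete your argument you would need either to reproduce this valuation estimate or to make the inductive bookkeeping precise; as written, the heart of the proof is missing.

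Second, your appeal to Proposition~\ref{prop:toroidal} to make $F(g(x_1),\boldsymbol{x}')$ a well-defined Laurent series does not cover the case $g(0)=c\neq 0$: a monomial substitution can move a zero or pole of $g$ at the origin, but if $g(0)=c\neq 0$ then substituting $g$ into a Laurent series supported on a cone produces infinitely many contributions to a single coefficient, and no toroidal change of variables fixes this. The paper handles it by reducing to $g(z)=z+c$ and writing $g=g_1\circ g_2\circ g_1$ with $g_1(z)=1/z$ and $g_2(z)=z/(1+cz)$, so that each factor falls into a case already treated ($g(0)=0$ or a pole at $0$). Some device of this kind is needed in your write-up as well.
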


\section{Preliminaries and Laurent series expansions}\label{sec:laurent}

Throughout, $p$ is a prime. The {\emph{$p$-adic valuation}} $\nu_p (a)$ of a
rational number $a \in \mathbb{Q}^{\times}$ is the largest $r \in
\mathbb{Z}$ such that $a / p^r \in \mathbb{Z}_p$, with the understanding
that $\nu_p (0) = \infty$. If $\boldsymbol{a}= (a_1, \ldots, a_n) \in
\mathbb{Q}^n$ is a vector, then $\nu_p (\boldsymbol{a}) = \min \{ \nu_p (a_1),
\ldots, \nu_p (a_n) \}$. Similarly, we say that $p$ divides a vector
$\boldsymbol{a}$ if and only if $\nu_p (\boldsymbol{a}) \geq 1$, that is, $p$
divides each component of $\boldsymbol{a}$.

When working with several variables, we typically use the vector notation
$\boldsymbol{x}= (x_1, \ldots, x_n)$ and write, for instance, $\mathbb{Q}
(\boldsymbol{x}) =\mathbb{Q} (x_1, \ldots, x_n)$ for the ring of rational
functions, or $\mathbb{Q} [\boldsymbol{x}^{\pm 1}] =\mathbb{Q} [x_1^{\pm 1},
\ldots, x_n^{\pm 1}]$ for the ring of Laurent polynomials in several
variables. Similarly, we write $\boldsymbol{x}^{\boldsymbol{k}} = x_1^{k_1} \cdots
x_n^{k_n}$ for monomials and refer to $\boldsymbol{k}= (k_1, \ldots, k_n)$ as
its exponent vector. The {\emph{support}} of a Laurent polynomial $P \in
\mathbb{Q} [\boldsymbol{x}^{\pm 1}]$, denoted $\operatorname{supp} (P) \subseteq
\mathbb{Z}^n$, is the set of exponent vectors of the (non-zero) monomials of
$P$. The {\emph{Newton polytope}} $N (P)$ of $P$ is the convex closure of
$\operatorname{supp} (P)$. The {\emph{cone}} generated by vectors $\boldsymbol{v}_1,
\boldsymbol{v}_2, \ldots \in \mathbb{R}^n$ is the $\mathbb{R}_{\geq
0}$-span of these vectors. We say such a cone $C$ is {\emph{proper}} if there
exists a linear form $\alpha$ such that $\alpha (\boldsymbol{w}) > 0$ for all
nonzero $\boldsymbol{w} \in C$. The cones of importance to us are proper. For
instance, suppose that $\boldsymbol{v}$ is a vertex of the Newton polytope $N
(P)$ of a Laurent polynomial $P$. Then the cone generated by $N
(P\boldsymbol{x}^{-\boldsymbol{v}})$ is a proper cone. Note that our cones are
based on the vertex $\boldsymbol{0}$.

In this paper, we frequently discuss rational functions $F = P / Q \in
\mathbb{Q} (\boldsymbol{x})$. In principle, we could choose $P$ and $Q$ to be
polynomials. However, for certain purposes, it turns out to be natural to
allow $P$ and $Q$ to be Laurent polynomials, that is, $P, Q \in \mathbb{Z}
[\boldsymbol{x}^{\pm 1}]$. We have a Laurent series expansion of $F$ associated
to each vertex $\boldsymbol{v}$ of $N (Q)$ as follows \cite[p.~195]{gkz-det}.
Writing $Q = \sum_{\boldsymbol{k}} q_{\boldsymbol{k}} \boldsymbol{x}^{\boldsymbol{k}}$
with $q_{\boldsymbol{k}} \in \mathbb{Z}$, note that
$\boldsymbol{x}^{\boldsymbol{v}} / Q$ can be expanded as
\begin{equation}
  \frac{\boldsymbol{x}^{\boldsymbol{v}}}{Q} = \frac{1}{q_{\boldsymbol{v}} +
  \sum_{\boldsymbol{k} \neq \boldsymbol{v}} q_{\boldsymbol{k}}
  \boldsymbol{x}^{\boldsymbol{k}-\boldsymbol{v}}} = \frac{1}{q_{\boldsymbol{v}}}
  \sum_{m = 0}^{\infty} \left(- \sum_{\boldsymbol{k} \neq \boldsymbol{v}}
  \frac{q_{\boldsymbol{k}}}{q_{\boldsymbol{v}}}
  \boldsymbol{x}^{\boldsymbol{k}-\boldsymbol{v}} \right)^m = \sum_{\boldsymbol{k} \in
  C} g_{\boldsymbol{k}} \boldsymbol{x}^{\boldsymbol{k}}, \label{eq:laurent:Q}
\end{equation}
where $C$ is the proper cone generated by the vectors $N (Q
/\boldsymbol{x}^{\boldsymbol{v}})$. To see that the $g_{\boldsymbol{k}}$ in
\eqref{eq:laurent:Q} are finite, so that the series is well-defined, let
$\alpha$ be a linear form such that $\alpha (\boldsymbol{w}) > 0$ for all
nonzero $\boldsymbol{w} \in C$, and observe that $\alpha
(\boldsymbol{k}-\boldsymbol{v}) > 0$ for all $\boldsymbol{k} \in \operatorname{supp} (Q)$
with $\boldsymbol{k} \neq \boldsymbol{v}$. Clearly, $g_{\boldsymbol{k}} \in
\mathbb{Z} [q_{\boldsymbol{v}}^{- 1}]$. Multiplying the series
\eqref{eq:laurent:Q} with $P /\boldsymbol{x}^{\boldsymbol{v}}$, we obtain a
Laurent series $f = \sum_{\boldsymbol{k}} f_{\boldsymbol{k}}
\boldsymbol{x}^{\boldsymbol{k}}$ for $F = P / Q$. Its coefficients again satisfy
$f_{\boldsymbol{k}} \in \mathbb{Z} [q_{\boldsymbol{v}}^{- 1}]$. We refer to $f
\in \mathbb{Q} [[\boldsymbol{x}^{\pm 1}]]$ as the {\emph{Laurent series
expansion}} of $F$ with respect to $\boldsymbol{v}$. The support of $f$, denoted
by $\operatorname{supp} (f)$, is the set of all $\boldsymbol{k} \in \mathbb{Z}^n$ such
that $f_{\boldsymbol{k}} \neq 0$. Observe that, if $p$ is a prime such that
$q_{\boldsymbol{v}} \in \mathbb{Z}_p^{\times}$, then $f \in \mathbb{Z}_p
[[\boldsymbol{x}^{\pm 1}]]$. In particular, for all but finitely many primes
$p$, all Laurent series expansions of $F$ have coefficients that are $p$-adic
integers.

For any Laurent series $f = \sum_{\boldsymbol{k}} f_{\boldsymbol{k}}
\boldsymbol{x}^{\boldsymbol{k}} \in \mathbb{Q} [[\boldsymbol{x}^{\pm 1}]]$, we
refer to $f_{\boldsymbol{0}}$ as its constant term.

Finally, especially in Section~\ref{sec:det}, it will be convenient to use the
Euler operator $\theta_y = y \frac{\partial}{\partial y}$. If there is no
possibility for confusion, we simply write $\theta_i = \theta_{x_i}$.

\section{Gauss congruences}

\begin{definition}
  \label{def:gauss:laurent}We say that a Laurent series
  \begin{equation*}
    f = \sum_{\boldsymbol{k} \in \mathbb{Z}^n} f_{\boldsymbol{k}}
     \boldsymbol{x}^{\boldsymbol{k}} \in \mathbb{Q} [[\boldsymbol{x}^{\pm 1}]]
  \end{equation*}
  satisfies the {\emph{Gauss congruences}} for the prime $p$ if $f \in
  \mathbb{Z}_p [[\boldsymbol{x}^{\pm 1}]]$ and
  \begin{equation*}
    f_{\boldsymbol{m}p^r} \equiv f_{\boldsymbol{m}p^{r - 1}} \quad (\operatorname{mod}
     p^r)
  \end{equation*}
  for all $\boldsymbol{m} \in \mathbb{Z}^n$ and all $r \geq 1$. We say
  that $f$ has the {\emph{Gauss property}} if it satisfies the Gauss
  congruences for all but finitely many primes.
\end{definition}

Let $U_p$ be the operator on Laurent series defined by
\begin{equation}
  U_p \left(\sum_{\boldsymbol{k} \in \mathbb{Z}^n} c_{\boldsymbol{k}}
  \boldsymbol{x}^{\boldsymbol{k}} \right) = \sum_{\boldsymbol{k} \in \mathbb{Z}^n}
  c_{p\boldsymbol{k}} \boldsymbol{x}^{\boldsymbol{k}} . \label{eq:U}
\end{equation}
Note that a Laurent series $f$ has the Gauss property if and only if, for all
$r \geq 1$ and all but finitely many primes $p$,
\begin{equation*}
  U_p^r (f) \equiv U_p^{r - 1} (f) \quad (\operatorname{mod} p^r) .
\end{equation*}
The following observation is more or less straightforward.

\begin{proposition}
  \label{prop:Up}Let $\zeta = e^{2 \pi i / p}$. If $f$ is the Laurent series
  of the rational function $F \in \mathbb{Q} (\boldsymbol{x})$ with respect to
  the vertex $\boldsymbol{v}$, then $U_p (f)$ is the Laurent series of the
  rational function
  \begin{equation*}
    F^{(p)} = \frac{1}{p^n} \sum_{r_1, \ldots, r_n = 0}^{p - 1} F
     (\zeta^{r_1} x_1^{1 / p}, \ldots, \zeta^{r_n} x_n^{1 / p})
  \end{equation*}
  with respect to the vertex $p^{n - 1} \boldsymbol{v}$.
\end{proposition}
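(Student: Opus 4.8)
The plan is to reduce the claim to the classical root-of-unity multisection applied in each coordinate, and then to locate the base vertex of the resulting expansion by a Newton polytope computation. I would start from the orthogonality relation $\frac1p\sum_{r=0}^{p-1}\zeta^{rj}$, which equals $1$ if $p\mid j$ and $0$ otherwise; taking the product over the $n$ coordinates gives $\frac{1}{p^n}\sum_{\boldsymbol r}\zeta^{\boldsymbol r\cdot\boldsymbol k}=1$ when $p\mid\boldsymbol k$ (that is, $p$ divides every entry of $\boldsymbol k$) and $0$ otherwise, the sum being over $\boldsymbol r\in\{0,\ldots,p-1\}^n$.

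Writing $f=\sum_{\boldsymbol k}f_{\boldsymbol k}\boldsymbol x^{\boldsymbol k}$ for the Laurent expansion of $F$ with respect to $\boldsymbol v$, I would substitute $x_i\mapsto\zeta^{r_i}x_i^{1/p}$ term by term, so that $\boldsymbol x^{\boldsymbol k}\mapsto\zeta^{\boldsymbol r\cdot\boldsymbol k}\boldsymbol x^{\boldsymbol k/p}$, average over $\boldsymbol r$, interchange the cone-supported outer sum with the finite inner sum, and apply the orthogonality relation. This collapses to $\frac{1}{p^n}\sum_{\boldsymbol r}f(\zeta^{r_1}x_1^{1/p},\ldots,\zeta^{r_n}x_n^{1/p})=\sum_{p\mid\boldsymbol k}f_{\boldsymbol k}\boldsymbol x^{\boldsymbol k/p}=\sum_{\boldsymbol j}f_{p\boldsymbol j}\boldsymbol x^{\boldsymbol j}=U_p(f)$; note that only exponents with integer entries survive, so no fractional powers remain. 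Because $x_i\mapsto\zeta^{r_i}x_i^{1/p}$ is an invertible monomial change of variables (toroidal up to scaling by a root of unity), it commutes with passing from a rational function to its Laurent expansion; hence $f(\zeta^{r_1}x_1^{1/p},\ldots)$ is the Laurent expansion of $F(\zeta^{r_1}x_1^{1/p},\ldots)$, and, since all these series share the support $\tfrac1p\operatorname{supp}(f)$ and lie in a single completed Laurent ring, their average $U_p(f)$ is a Laurent expansion of $F^{(p)}$. That $F^{(p)}\in\mathbb{Q}(\boldsymbol x)$ then follows since $U_p(f)$ has coefficients in $\mathbb{Q}$.

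It remains to identify the base vertex, which I expect to be the main obstacle. Here I would take as a common denominator for $F^{(p)}$ the norm $Q^{(p)}=\prod_{\boldsymbol r}Q(\zeta^{r_1}x_1^{1/p},\ldots)$; each factor has support $\tfrac1p\operatorname{supp}(Q)$ and hence Newton polytope $\tfrac1p N(Q)$, and since the Newton polytope of a product is the Minkowski sum of those of the factors, $N(Q^{(p)})=p^n\cdot\tfrac1p N(Q)=p^{n-1}N(Q)$. Therefore the vertices of $N(Q^{(p)})$ are precisely $p^{n-1}$ times the vertices of $N(Q)$, and the local cone of $N(Q^{(p)})$ at $p^{n-1}\boldsymbol v$ is $p^{n-1}$ times the local cone of $N(Q)$ at $\boldsymbol v$, which generates the same proper cone $C$. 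As $U_p(f)$ is supported in $\tfrac1p\operatorname{supp}(f)$, a translate of $C$, it is exactly the expansion of $F^{(p)}$ associated with the vertex $p^{n-1}\boldsymbol v$, as claimed. The coordinatewise multisection is routine; the delicate points are justifying that monomial substitution commutes with Laurent expansion, so that the formal identity reflects an identity of genuine Laurent expansions, and the Minkowski-sum argument that yields the factor $p^{n-1}$ and matches the cone of $F^{(p)}$ at $p^{n-1}\boldsymbol v$ to the original cone at $\boldsymbol v$.
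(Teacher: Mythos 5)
Your argument is correct and is exactly the route the paper intends: the paper gives no formal proof (it calls the observation ``more or less straightforward''), but the remarks immediately following the proposition --- introducing the norm $Q^{(p)}=\prod_{r_1,\ldots,r_n}Q(\zeta^{r_1}x_1^{1/p},\ldots,\zeta^{r_n}x_n^{1/p})$ and noting $N(Q^{(p)})=p^{n-1}N(Q)$ with $p^{n-1}\boldsymbol{v}$ as the corresponding vertex --- are precisely the multisection-plus-Minkowski-sum computation you carry out. Your proof simply supplies the details the paper omits, so there is nothing to change.
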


In fact, let us note that, with $F = P / Q$ for $P, Q \in \mathbb{Z}
[\boldsymbol{x}^{\pm 1}]$, we can write $F^{(p)} = P^{(p)} / Q^{(p)}$ with
denominator
\begin{equation*}
  Q^{(p)} = \prod_{r_1, \ldots, r_n = 0}^{p - 1} Q (\zeta^{r_1} x_1^{1 / p},
   \ldots, \zeta^{r_n} x_n^{1 / p}),
\end{equation*}
which is in $\mathbb{Z} [\boldsymbol{x}^{\pm 1}]$ with Newton polytope $N
(Q^{(p)}) = p^{n - 1} N (Q)$. If $\boldsymbol{v}$ is a vertex of $N (Q)$ with
coefficient $q_{\boldsymbol{v}} \in \mathbb{Z}_p^{\times}$, then $p^{n - 1}
\boldsymbol{v}$ is a vertex of $N (Q^{(p)})$ with coefficient
$q_{\boldsymbol{v}}^{p^n} \in \mathbb{Z}_p^{\times}$.

\begin{definition}
  A rational function $F = P / Q \in \mathbb{Q} (\boldsymbol{x})$ has the
  {\emph{Gauss property}} if, for every vertex $\boldsymbol{v}$ of $N (Q)$, the
  Laurent series expansion of $F$ with respect to $\boldsymbol{v}$ has the Gauss
  property.
\end{definition}

By the next result, it suffices to consider a single vertex $\boldsymbol{v}$ in
that definition. The proof relies on the following simple but important
observation. If $P, Q \in \mathbb{Z} [\boldsymbol{x}^{\pm 1}]$ and
$\boldsymbol{v}$ is a vertex of $N (Q)$, then, for all but finitely many primes
$p$, the Laurent expansion $f$ of $P / Q$ with respect to $\boldsymbol{v}$ has
$p$-adic integers as coefficients. In such a case, $f \equiv 0 \pmod{p^r}$ if and only if $P \equiv 0 \pmod{p^r}$.

\begin{proposition}
  \label{prop:PQv}Let $P, Q \in \mathbb{Z} [\boldsymbol{x}^{\pm 1}]$. Let
  $\boldsymbol{v}, \boldsymbol{w}$ be vertices of $N (Q)$. Then the Laurent series
  expansion of $F = P / Q$ with respect to $\boldsymbol{v}$ has the Gauss
  property if and only if the Laurent series expansion of $F$ with respect to
  $\boldsymbol{w}$ has the Gauss property.
\end{proposition}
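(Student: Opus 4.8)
The plan is to reduce Proposition~\ref{prop:PQv} to a statement about the relationship between the two Laurent expansions $f_{\boldsymbol{v}}$ and $f_{\boldsymbol{w}}$ of $F = P/Q$, and to show that the Gauss property for one forces it for the other. By symmetry it suffices to prove one implication: assume $f_{\boldsymbol{v}}$ has the Gauss property and deduce that $f_{\boldsymbol{w}}$ does as well. Throughout I would discard the finitely many primes $p$ for which either $q_{\boldsymbol{v}}$ or $q_{\boldsymbol{w}}$ fails to lie in $\mathbb{Z}_p^{\times}$; for all remaining primes, both expansions have coefficients in $\mathbb{Z}_p$, and by the boxed observation preceding the proposition, a congruence $g \equiv 0 \pmod{p^r}$ for the relevant Laurent series is equivalent to the numerator congruence $P \equiv 0 \pmod{p^r}$.

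The key idea is that the Gauss property is really a property of the numerator $P$ relative to the $U_p$-action, not of the particular cone in which we expand. Concretely, write $f_{\boldsymbol{v}} = \sum_{\boldsymbol{k}} f^{(\boldsymbol{v})}_{\boldsymbol{k}} \boldsymbol{x}^{\boldsymbol{k}}$ and similarly for $\boldsymbol{w}$. The Gauss congruences for $f_{\boldsymbol{v}}$ say that $U_p^r(f_{\boldsymbol{v}}) - U_p^{r-1}(f_{\boldsymbol{v}}) \equiv 0 \pmod{p^r}$. By Proposition~\ref{prop:Up}, $U_p^r(f_{\boldsymbol{v}})$ is the Laurent expansion of the rational function $F^{(p^r)}$ (the $r$-fold iterate of the $F \mapsto F^{(p)}$ construction) with respect to the vertex $p^{r(n-1)}\boldsymbol{v}$ of $N(Q^{(p^r)})$, and $U_p^{r-1}(f_{\boldsymbol{v}})$ is the expansion of $F^{(p^{r-1})}$ with respect to $p^{(r-1)(n-1)}\boldsymbol{v}$. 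The crucial point is that this iterated construction depends only on $F$ and $p$, not on the chosen vertex: the rational functions $F^{(p^r)}$ are intrinsic. Thus the difference $U_p^r(f_{\boldsymbol{v}}) - U_p^{r-1}(f_{\boldsymbol{v}})$ and the corresponding difference for $\boldsymbol{w}$ are the two Laurent expansions of one and the same rational function, namely $F^{(p^r)} - F^{(p^{r-1})}$ (after clearing the common shift, this is a single element of $\mathbb{Q}(\boldsymbol{x})$), taken with respect to two different vertices of a common denominator.

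The plan is therefore to show that vanishing modulo $p^r$ of a Laurent expansion of a fixed rational function $G \in \mathbb{Q}(\boldsymbol{x})$ is independent of the vertex used. Writing $G = A/B$ with $A, B \in \mathbb{Z}[\boldsymbol{x}^{\pm 1}]$ and $B = p^r B_0$ corresponding to the common denominator, the expansion of $G$ at a vertex $\boldsymbol{u}$ of $N(B)$ is $\equiv 0 \pmod{p^r}$ if and only if $A \equiv 0 \pmod{p^r}$ as a Laurent polynomial (by the boxed observation, applied with $q_{\boldsymbol{u}} \in \mathbb{Z}_p^{\times}$). Since the condition $A \equiv 0 \pmod{p^r}$ makes no reference to $\boldsymbol{u}$, the congruence holds at the vertex $\boldsymbol{v}$ if and only if it holds at $\boldsymbol{w}$. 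Applying this to $G = F^{(p^r)} - F^{(p^{r-1})}$ for every $r \geq 1$ yields that $f_{\boldsymbol{v}}$ satisfies the Gauss congruences mod $p^r$ exactly when $f_{\boldsymbol{w}}$ does, completing the argument.

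The main obstacle I anticipate is bookkeeping rather than conceptual: one must verify that the iterated operator $F \mapsto F^{(p^r)}$ indeed produces a rational function with a common denominator $Q^{(p^r)} \in \mathbb{Z}[\boldsymbol{x}^{\pm 1}]$ whose Newton polytope has both $p^{r(n-1)}\boldsymbol{v}$ and $p^{r(n-1)}\boldsymbol{w}$ as vertices with unit coefficients (for $p$ outside the excluded finite set), so that Proposition~\ref{prop:Up} and the boxed observation genuinely apply simultaneously at both vertices. This amounts to checking that the vertex-tracking in Proposition~\ref{prop:Up} is compatible with iteration and that the numerator of the difference $F^{(p^r)} - F^{(p^{r-1})}$ is a single Laurent polynomial to which the integrality criterion can be applied uniformly. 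Once these compatibilities are in place, the equivalence of the two Gauss properties is immediate.
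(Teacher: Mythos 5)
Your proposal is correct and follows essentially the same route as the paper: both reduce the Gauss congruences at a given vertex to divisibility by $p^r$ of the numerator of the vertex-independent rational function $F^{(p^r)} - F^{(p^{r-1})}$ (the paper's $R_{p,r}$, with denominator $Q^{(p^r)}Q^{(p^{r-1})}$), using Proposition~\ref{prop:Up} and the observation that, for the excluded-primes-removed expansions, a Laurent series vanishes mod $p^r$ iff its numerator does. The only blemish is the stray notation ``$B = p^r B_0$,'' which should simply read that $B$ is the common denominator $Q^{(p^r)}Q^{(p^{r-1})}$; the bookkeeping you flag at the end is exactly what the paper carries out.
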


\begin{proof}
  Suppose that $f = \sum_{\boldsymbol{k} \in \mathbb{Z}^n} f_{\boldsymbol{k}}
  \boldsymbol{x}^{\boldsymbol{k}}$ is the Laurent series expansion of $F$ with
  respect to $\boldsymbol{v}$. By Proposition~\ref{prop:Up}, $U_p^r (f) \in
  \mathbb{Z}_p [[\boldsymbol{x}^{\pm 1}]]$ is the Laurent expansion of a
  rational function $P^{(p^r)} / Q^{(p^r)}$ with respect to $p^{r (n - 1)}
  \boldsymbol{v}$. Consequently, $U_p^r (f) - U_p^{r - 1} (f)$ is the Laurent
  expansion of a rational function $R_{p, r}$ with denominator $Q^{(p^r)}
  Q^{(p^{r - 1})} \in \mathbb{Z} [\boldsymbol{x}^{\pm 1}]$. This expansion is
  with respect to the vertex $p^{(2 r - 1) (n - 1)} \boldsymbol{v}$. Note that
  the rational function $R_{p, r}$ is independent of the choice of
  $\boldsymbol{v}$.
  
  Let $p$ be a prime such that $f \in \mathbb{Z}_p [[\boldsymbol{x}^{\pm 1}]]$.
  Then, the corresponding Laurent expansion $U_p^r (f) - U_p^{r - 1} (f)$ of
  $R_{p, r}$ has $p$-adic integers as coefficients as well. Consequently, the
  congruence $U_p^r (f) - U_p^{r - 1} (f) \equiv 0 \pmod{p^r}$ holds if and only if the numerator of $R_{p, r}$ is divisible by
  $p^r$.
  
  Hence, $f$ has the Gauss property if and only if, for all but finitely many
  primes $p$, the numerator of $R_{p, r}$ is divisible, for all $r \geq
  1$, by $p^r$. Since this latter statement is independent of the choice of
  vertex $\boldsymbol{v}$, the claim follows.
\end{proof}

The proof of the next observation actually only makes use of the fact that $P
/ Q$ satisfies the Gauss congruences for a single suitable prime $p$.

\begin{proposition}
  \label{prop:NPNQ}Let $P, Q \in \mathbb{Z} [\boldsymbol{x}^{\pm 1}]$. If $P /
  Q$ has the Gauss property, then $N (P) \subseteq N (Q)$.
\end{proposition}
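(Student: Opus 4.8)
The plan is to argue the contrapositive: assuming $N(P) \not\subseteq N(Q)$, I will exhibit a single ``suitable'' prime $p$ for which the Gauss congruences fail, which by Definition~\ref{def:gauss:laurent} shows that $P/Q$ does not have the Gauss property. Throughout I fix $p$ large enough that, for the vertex $\boldsymbol{v}$ chosen below, the leading coefficient $q_{\boldsymbol{v}}$ lies in $\mathbb{Z}_p^{\times}$ (so that, as reviewed in Section~\ref{sec:laurent}, the Laurent expansion $f$ of $P/Q$ with respect to $\boldsymbol{v}$ has $p$-adic integer coefficients) and that $p$ does not divide a certain nonzero integer coefficient of $P$ identified below; only finitely many primes are excluded, so this is harmless, and I may also assume $p$ is among the primes for which the Gauss congruences hold.

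First I would set up the geometry. Since $N(P) \not\subseteq N(Q)$, there is a vertex $\boldsymbol{a}$ of $N(P)$ lying outside $N(Q)$, hence a (rational, then integral after clearing denominators) linear functional $\boldsymbol{u}$ with $\langle \boldsymbol{u}, \boldsymbol{a}\rangle$ strictly exceeding $M_Q := \max_{\boldsymbol{k}\in N(Q)}\langle\boldsymbol{u},\boldsymbol{k}\rangle$. By perturbing $\boldsymbol{u}$ within the open separating region I may assume that this maximum is attained at a unique vertex $\boldsymbol{v}$ of $N(Q)$; this is the vertex with respect to which I expand. Setting $M_P := \max_{\boldsymbol{b}\in\operatorname{supp}(P)}\langle\boldsymbol{u},\boldsymbol{b}\rangle$ and $\delta := M_P - M_Q$, the point $\boldsymbol{a}$ guarantees $M_P \geq \langle\boldsymbol{u},\boldsymbol{a}\rangle > M_Q$, so $\delta > 0$.

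Next I would analyze the expansion $f = (P/\boldsymbol{x}^{\boldsymbol{v}})(\boldsymbol{x}^{\boldsymbol{v}}/Q)$. By the construction in \eqref{eq:laurent:Q}, the series $\boldsymbol{x}^{\boldsymbol{v}}/Q$ is supported on the proper cone $C$ generated by $\operatorname{supp}(Q)-\boldsymbol{v}$, on which $\langle\boldsymbol{u},\cdot\rangle \leq 0$; by the uniqueness of the maximizer $\boldsymbol{v}$ one in fact has $\langle\boldsymbol{u},\cdot\rangle < 0$ on $C\setminus\{\boldsymbol{0}\}$, and the constant term is $1/q_{\boldsymbol{v}}$. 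Consequently every exponent in $\operatorname{supp}(f)$ has $\boldsymbol{u}$-degree at most $\delta$, and for any $\boldsymbol{b}_0 \in \operatorname{supp}(P)$ attaining $M_P$, the coefficient at $\boldsymbol{k}_0 := \boldsymbol{b}_0 - \boldsymbol{v}$ receives a contribution from a pair $(\boldsymbol{b},\boldsymbol{c})$ with $\boldsymbol{b}-\boldsymbol{v}+\boldsymbol{c}=\boldsymbol{k}_0$ only when $\langle\boldsymbol{u},\boldsymbol{b}\rangle=M_P$ and $\boldsymbol{c}=\boldsymbol{0}$, forcing $\boldsymbol{b}=\boldsymbol{b}_0$. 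Hence $f_{\boldsymbol{k}_0} = p_{\boldsymbol{b}_0}/q_{\boldsymbol{v}}$, where $p_{\boldsymbol{b}_0}$ is a nonzero integer and $q_{\boldsymbol{v}}\in\mathbb{Z}_p^{\times}$, so $f_{\boldsymbol{k}_0}\in\mathbb{Z}_p^{\times}$ for the chosen $p$.

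Finally I would apply the Gauss congruence at $\boldsymbol{m}=\boldsymbol{k}_0$, $r=1$, namely $f_{p\boldsymbol{k}_0} \equiv f_{\boldsymbol{k}_0} \pmod{p}$. Since $\langle\boldsymbol{u},p\boldsymbol{k}_0\rangle = p\delta > \delta$ exceeds the maximal $\boldsymbol{u}$-degree on $\operatorname{supp}(f)$, we get $f_{p\boldsymbol{k}_0}=0$, whence $f_{\boldsymbol{k}_0}\equiv 0 \pmod{p}$, contradicting $f_{\boldsymbol{k}_0}\in\mathbb{Z}_p^{\times}$. This proves the contrapositive. I expect the main obstacle to be the geometric bookkeeping of the first two steps: choosing $\boldsymbol{u}$ so that it simultaneously separates $\boldsymbol{a}$ from $N(Q)$ and singles out a unique maximizing vertex $\boldsymbol{v}$, and then verifying that the extremal coefficient $f_{\boldsymbol{k}_0}$ survives without cancellation. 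Everything after that is the short congruence argument above, which uses the Gauss property only for the single prime $p$.
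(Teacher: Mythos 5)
Your proof is correct and takes essentially the same route as the paper's: both exploit the mod-$p$ Gauss congruence at a single well-chosen prime by locating an extremal exponent of the Laurent expansion outside the cone generated by $N(Q/\boldsymbol{x}^{\boldsymbol{v}})$, whose coefficient is a $p$-adic unit while the coefficient at its $p$-fold dilate vanishes. Your separating linear functional $\boldsymbol{u}$ is just a more explicit rendering of the paper's choice of a vertex of the convex hull of the support, and it cleanly justifies both the non-cancellation of $f_{\boldsymbol{k}_0}$ and the vanishing of $f_{p\boldsymbol{k}_0}$.
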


\begin{proof}
  To prove this claim it is sufficient to show that, for every vertex
  $\boldsymbol{v} \in N (Q)$, the Newton polytope $N (P
  /\boldsymbol{x}^{\boldsymbol{v}})$ is contained in the cone $C$ generated by the
  vectors $N (Q /\boldsymbol{x}^{\boldsymbol{v}})$. Let $p$ be a prime such that
  the Gauss congruences for $P / Q$ hold for $p$ and such that the
  coefficients of $P$ corresponding to vertices of $N (P)$ are $p$-adic units.
  
  Let $\boldsymbol{v} \in N (Q)$, and let $\sum_{\boldsymbol{k}} f_{\boldsymbol{k}}
  \boldsymbol{x}^{\boldsymbol{k}}$ be the Laurent series for $P / Q$ with respect
  to $\boldsymbol{v}$. For contradiction, suppose that there is a vertex
  $\boldsymbol{w}$ of $N (P /\boldsymbol{x}^{\boldsymbol{v}})$ such that
  $\boldsymbol{w} \not\in C$. It follows from \eqref{eq:laurent:Q} that
  $\boldsymbol{w}$ is a vertex of the convex hull of the support of the Laurent
  series of $P / Q$ with respect to $\boldsymbol{v}$. By our choice of $p$,
  $f_{\boldsymbol{w}}$ is a $p$-adic unit. On the other hand, the point
  $p\boldsymbol{w}$ is not in the support of the Laurent series, so that
  $f_{p\boldsymbol{w}} = 0$. This contradicts the congruence $f_{p\boldsymbol{w}}
  \equiv f_{\boldsymbol{w}} \pmod{p}$.
\end{proof}

\begin{example}
  The rational function
  \begin{equation*}
    \frac{P}{Q} = \frac{1 + 2 x - x^2}{1 - x^2} = \frac{1}{1 - x} +
     \frac{x}{1 + x}
  \end{equation*}
  obviously satisfies the Gauss congruences for all primes. As predicted by
  Proposition~\ref{prop:NPNQ}, $N (P) \subseteq N (Q)$. However, note that
  $\operatorname{supp} (P) \nsubseteq \operatorname{supp} (Q)$.
\end{example}

\begin{corollary}
  \label{cor:NQ:cone}Suppose $F = P / Q$ with $P, Q \in \mathbb{Z}
  [\boldsymbol{x}^{\pm 1}]$ has the Gauss property. Then the Laurent series
  expansion of $F$ with respect to any vertex $\boldsymbol{v}$ of $N (Q)$ is
  supported on a proper cone, namely the cone generated by $N (Q
  /\boldsymbol{x}^{\boldsymbol{v}})$.
\end{corollary}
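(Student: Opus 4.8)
The plan is to read off the result directly from the explicit construction of the Laurent series expansion in Section~\ref{sec:laurent}, combined with the inclusion already established inside the proof of Proposition~\ref{prop:NPNQ}. Recall that the Laurent series of $F = P / Q$ with respect to the vertex $\boldsymbol{v}$ of $N (Q)$ is obtained by multiplying the expansion \eqref{eq:laurent:Q} of $\boldsymbol{x}^{\boldsymbol{v}} / Q$ by the Laurent polynomial $P / \boldsymbol{x}^{\boldsymbol{v}}$. By \eqref{eq:laurent:Q}, the expansion of $\boldsymbol{x}^{\boldsymbol{v}} / Q$ is supported on the cone $C$ generated by $N (Q / \boldsymbol{x}^{\boldsymbol{v}})$, which is proper by the remarks following its definition. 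Consequently, writing $A + B$ for the set of sums $\{\boldsymbol{a} + \boldsymbol{b} : \boldsymbol{a} \in A, \, \boldsymbol{b} \in B\}$, the support of the Laurent series of $F$ is contained in $\operatorname{supp} (P / \boldsymbol{x}^{\boldsymbol{v}}) + C$.

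The key input is that $\operatorname{supp} (P / \boldsymbol{x}^{\boldsymbol{v}}) \subseteq C$. This is exactly what the proof of Proposition~\ref{prop:NPNQ} establishes: under the hypothesis that $P / Q$ has the Gauss property, it shows, for every vertex $\boldsymbol{v}$ of $N (Q)$, the vertex-wise inclusion $N (P / \boldsymbol{x}^{\boldsymbol{v}}) \subseteq C$, and since $\operatorname{supp} (P / \boldsymbol{x}^{\boldsymbol{v}}) \subseteq N (P / \boldsymbol{x}^{\boldsymbol{v}})$ the claimed containment follows. To conclude, I would use that $C$ is a convex cone containing $\boldsymbol{0}$ (as $\boldsymbol{v}$ is a vertex, $\boldsymbol{0} \in N (Q / \boldsymbol{x}^{\boldsymbol{v}})$), so that $C$ is closed under addition and $C + C = C$. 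Therefore $\operatorname{supp} (P / \boldsymbol{x}^{\boldsymbol{v}}) + C \subseteq C + C = C$, which shows the support of the Laurent series lies in the proper cone $C$, as desired.

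There is no genuinely hard step here; the substantive content is already packaged in Proposition~\ref{prop:NPNQ}. The only point requiring care is to invoke not merely the global conclusion $N (P) \subseteq N (Q)$ of that proposition, but the stronger vertex-wise statement $N (P / \boldsymbol{x}^{\boldsymbol{v}}) \subseteq C$ that its proof actually produces, since it is this refined inclusion that feeds directly into the Minkowski-sum argument above.
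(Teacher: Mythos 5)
Your argument is correct and essentially identical to the paper's: both expand $\boldsymbol{x}^{\boldsymbol{v}}/Q$ on the proper cone $C$ via \eqref{eq:laurent:Q}, multiply by the Laurent polynomial $P/\boldsymbol{x}^{\boldsymbol{v}}$, and use that $C$ is closed under addition. One small correction to your closing remark: the vertex-wise inclusion $N(P/\boldsymbol{x}^{\boldsymbol{v}})\subseteq C$ already follows from the stated conclusion $N(P)\subseteq N(Q)$ of Proposition~\ref{prop:NPNQ} (translate by $-\boldsymbol{v}$ and note $N(Q/\boldsymbol{x}^{\boldsymbol{v}})\subseteq C$), so there is no need to reach inside that proposition's proof.
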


\begin{proof}
  Recall from \eqref{eq:laurent:Q} that $\boldsymbol{x}^{\boldsymbol{v}} / Q$ has
  a Laurent series supported on the proper cone $C$ generated by the vectors
  $N (Q /\boldsymbol{x}^{\boldsymbol{v}})$. By Proposition~\ref{prop:NPNQ}, $N (P)
  \subseteq N (Q)$, so that the Laurent polynomial $P
  /\boldsymbol{x}^{\boldsymbol{v}}$ is supported on $N (P
  /\boldsymbol{x}^{\boldsymbol{v}}) \subset C$. Hence, multiplying the Laurent
  series for $\boldsymbol{x}^{\boldsymbol{v}} / Q$ with $P
  /\boldsymbol{x}^{\boldsymbol{v}}$, results in a Laurent series for $P / Q$
  supported on $C$.
\end{proof}

A face of $N (Q)$ is a nonempty set $F \subseteq N (Q)$, which is the
intersection of $N (Q)$ and $h (\boldsymbol{w}) = d$, where $h$ is a linear form
such that $N (Q)$ is contained in the half-space $h (\boldsymbol{w}) \geq
d$. Let $F$ be a face of $N (Q)$. We denote with $Q_F$ the Laurent polynomial,
which is the sum of monomials of $Q$ with support in $F$. $P_F$ is likewise
obtained from $P$.

\begin{proposition}
  \label{prop:PQF}Let $P, Q \in \mathbb{Z} [\boldsymbol{x}^{\pm 1}]$. If $P /
  Q$ satisfies the Gauss congruences for the prime $p$, then so does $P_F /
  Q_F$ for every face $F$ of $N (Q)$.
\end{proposition}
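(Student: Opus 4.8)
The plan is to fix a single vertex of $N(Q)$ lying on the face $F$, expand $P/Q$ there, and isolate $F$ by grading this Laurent series according to the linear form that cuts out $F$. Concretely, let $h$ be a linear form with $N(Q)\subseteq\{h\ge d\}$ and $F=N(Q)\cap\{h=d\}$, and fix a vertex $\boldsymbol w$ of $F$. Since every vertex of a face of $N(Q)$ is itself a vertex of $N(Q)$, the hypothesis guarantees that the Laurent series $f=\sum_{\boldsymbol k}f_{\boldsymbol k}\boldsymbol x^{\boldsymbol k}$ of $P/Q$ with respect to $\boldsymbol w$ lies in $\mathbb Z_p[[\boldsymbol x^{\pm1}]]$ and satisfies the Gauss congruences for $p$. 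It then suffices to show that the Laurent expansion of $P_F/Q_F$ with respect to $\boldsymbol w$ inherits these properties, and afterwards to let $\boldsymbol w$ range over all vertices of $F$.

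The core step is to identify this expansion with the $h$-minimal slice of $f$. First I would record that $N(P)\subseteq N(Q)$ (via Proposition~\ref{prop:NPNQ}), so that, by Corollary~\ref{cor:NQ:cone}, $f$ is supported on the proper cone $C$ generated by $N(Q/\boldsymbol x^{\boldsymbol w})$; on this cone one has $h\ge 0$, so $f$ has no components of negative $h$-degree. Decompose $f=\sum_{j\ge0}f^{[j]}$ into its $h$-homogeneous parts $f^{[j]}=\sum_{h(\boldsymbol k)=j}f_{\boldsymbol k}\boldsymbol x^{\boldsymbol k}$, and likewise note that $\boldsymbol x^{-\boldsymbol w}Q$ and $\boldsymbol x^{-\boldsymbol w}P$ are supported in $\{h\ge0\}$ with $h$-degree-zero parts $\boldsymbol x^{-\boldsymbol w}Q_F$ and $\boldsymbol x^{-\boldsymbol w}P_F$, respectively. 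On the subring of Laurent series supported in $\{h\ge0\}$, the projection onto the $h$-degree-zero part is a ring homomorphism, since a product of two such series can only contribute to degree $0$ through the product of their degree-$0$ parts. Applying this homomorphism to the identity $(\boldsymbol x^{-\boldsymbol w}Q)\,f=\boldsymbol x^{-\boldsymbol w}P$ gives $Q_F\,f^{[0]}=P_F$; because $f^{[0]}$ is supported on $C\cap\{h=0\}$, which is exactly the cone generated by $N(Q_F/\boldsymbol x^{\boldsymbol w})$, it is precisely the Laurent series $g$ of $P_F/Q_F$ with respect to $\boldsymbol w$. In other words, $g_{\boldsymbol k}=f_{\boldsymbol k}$ whenever $h(\boldsymbol k)=0$ and $g_{\boldsymbol k}=0$ otherwise.

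The arithmetic conclusion is then immediate, exploiting that the condition $h(\boldsymbol m)=0$ is invariant under the scaling $\boldsymbol m\mapsto p^r\boldsymbol m$. Integrality of $g$ follows from $f\in\mathbb Z_p[[\boldsymbol x^{\pm1}]]$, since each $g_{\boldsymbol k}$ is either $f_{\boldsymbol k}$ or $0$. For the congruences, fix $\boldsymbol m$ and $r\ge1$. If $h(\boldsymbol m)\ne0$, then $h(p^r\boldsymbol m)=p^rh(\boldsymbol m)\ne0$ and similarly $h(p^{r-1}\boldsymbol m)\ne0$, so $g_{\boldsymbol m p^r}=0=g_{\boldsymbol m p^{r-1}}$; if $h(\boldsymbol m)=0$, then both indices lie on $\{h=0\}$, so $g_{\boldsymbol m p^r}=f_{\boldsymbol m p^r}$ and $g_{\boldsymbol m p^{r-1}}=f_{\boldsymbol m p^{r-1}}$, and the Gauss congruence for $f$ transfers verbatim to $g$. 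Ranging over the vertices $\boldsymbol w$ of $F$ completes the argument.

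The step I expect to be the main obstacle is the support control underlying the slicing, namely the containment $N(P)\subseteq N(Q)$: without it, $f$ may acquire components of negative $h$-degree, and the $h$-degree-zero slice of $f$ no longer equals $P_F/Q_F$ (a one-variable example such as $x^{-1}/(1-x)$ shows the identification genuinely fails). The clean justification is Proposition~\ref{prop:NPNQ}, and one must be slightly careful that this is available for the prime at hand; in the setting where $P/Q$ has the Gauss property this causes no difficulty, since $N(P)\subseteq N(Q)$ holds once for all, and we may assume it throughout. Everything else — the grading, the ring-homomorphism property of the minimal-slice projection, and the two-case congruence check — is routine once this containment is in place.
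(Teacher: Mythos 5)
Your proof is correct and follows essentially the same route as the paper's: both fix a vertex of $F$, identify the Laurent expansion of $P_F/Q_F$ at that vertex with the $h=0$ slice of the expansion of $P/Q$, and transfer the congruences using the linearity of $h$ (with the two-case check $h(\boldsymbol{m})=0$ versus $h(\boldsymbol{m})\neq 0$). The only difference is that you justify the slice identification via the degree-zero projection applied to $Qf=P$, which makes explicit the reliance on $N(P)\subseteq N(Q)$ via Proposition~\ref{prop:NPNQ} --- a support issue the paper's proof leaves implicit by reading the identification off directly from the geometric-series expansion \eqref{eq:laurent:Q} of $1/Q$.
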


\begin{proof}
  Choose a vertex $\boldsymbol{v}$ of $N (Q)$ contained in $F$. After
  multiplication of $P, Q$ with $\boldsymbol{x}^{-\boldsymbol{v}}$, we may as well
  assume that $\boldsymbol{v}=\boldsymbol{0}$. Let $h$ be the linear form such
  that $F$ is given as the intersection of $h (\boldsymbol{w}) = 0$ and $N (Q)$.
  Let $C$ be the cone spanned by the vectors of $N (Q)$.
  
  Let $\sum_{\boldsymbol{k}} f_{\boldsymbol{k}} \boldsymbol{x}^{\boldsymbol{k}}$ be
  the Laurent series for $P / Q$ with respect to $\boldsymbol{0}$. We observe
  from \eqref{eq:laurent:Q} that the Laurent expansion of $1 / Q_F$ with
  respect to $\boldsymbol{0}$ is obtained from the corresponding expansion of $1
  / Q$ by selecting only those terms corresponding to $\boldsymbol{k}$ such that
  $h (\boldsymbol{k}) = 0$. Consequently, the Laurent expansion of $P_F / Q_F$
  with respect to $\boldsymbol{0}$ is similarly given by
  \begin{equation*}
    \sum_{\boldsymbol{k} \in \mathbb{Z}^n} g_{\boldsymbol{k}}
     \boldsymbol{x}^{\boldsymbol{k}} = \sum_{\boldsymbol{k} \in \mathbb{Z}^n, h
     (\boldsymbol{k}) = 0} f_{\boldsymbol{k}} \boldsymbol{x}^{\boldsymbol{k}} .
  \end{equation*}
  Since $h$ is linear, the Gauss congruences $g_{\boldsymbol{m}p^r} \equiv
  g_{\boldsymbol{m}p^{r - 1}} \pmod{p^r}$, for $\boldsymbol{m}$
  such that $h (\boldsymbol{m}) = 0$, translate into $f_{\boldsymbol{m}p^r} \equiv
  f_{\boldsymbol{m}p^{r - 1}} \pmod{p^r}$ which holds by
  assumption. On the other hand, we trivially have $g_{\boldsymbol{m}p^r} \equiv
  g_{\boldsymbol{m}p^{r - 1}} \pmod{p^r}$ if $h
  (\boldsymbol{m}) \neq 0$ because then both $g_{\boldsymbol{m}p^r}$ and
  $g_{\boldsymbol{m}p^{r - 1}}$ are zero.
\end{proof}

\section{Determinants of logarithmic derivatives}\label{sec:det}

This section is concerned with a proof of Theorem~\ref{thm:det:intro}. As
indicated in Question~\ref{q:det} and the comments following it,
Theorem~\ref{thm:det:intro} appears to play a central role in the quest of
classifying rational functions with the Gauss property. In this section, we
therefore work over more general rings before again specializing to
$\mathbb{Z}$. Though possible, for other, less central, results in this paper
we do not pursue this level of generality.

\begin{definition}
  \label{def:frob}An integral domain $R$ with characteristic $0$ has a
  {\emph{$p$-Frobenius lift $\phi$}} if
  \begin{enumerate}
    \item $(p)$ is a prime ideal in $R$, and
    
    \item there is a ring homomorphism $\phi : R \rightarrow R$ such that
    $\phi (a) \equiv a^p \pmod{p}$ for every $a \in R$.
  \end{enumerate}
  In the sequel, we often write $a^{\phi}$ instead of $\phi (a)$.
\end{definition}

The most common example we shall look at is $\mathbb{Z}$, in which case the
identity map $\phi$ is a $p$-Frobenius lift for all primes $p$. More
generally, if $N \in \mathbb{Z}$, we can consider the ring $\mathbb{Z} [1 /
N]$, in which case the identity map $\phi$ is a $p$-Frobenius lift for all
primes $p$ not dividing $N$. A nontrivial example is the polynomial ring $R
=\mathbb{Z} [x]$ with $\phi (Q (x)) = Q (x^p)$. The following lemma suggests
a generalization of the Gauss congruences to $R$.

\begin{lemma}
  \label{lem:powers:frob}Let $R$ be a domain with $p$-Frobenius lift $\phi$.
  Then, for any $a \in R$ and any positive integer
  \begin{equation}
    a^{m p^r} \equiv (a^{\phi})^{m p^{r - 1}} \quad (\operatorname{mod} p^r)
    \label{eq:gauss:R}
  \end{equation}
  for all integers $m \geq 0$ and $r \geq 1$.
\end{lemma}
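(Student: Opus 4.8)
The plan is to prove the congruence \eqref{eq:gauss:R} by induction on $r$, isolating the substantive content in a single elementary lemma about $p$-th powers. The base case $r = 1$ is immediate from the defining property of the Frobenius lift: since $a^{\phi} \equiv a^p \pmod{p}$ and congruences modulo $p$ are preserved under taking $m$-th powers, we obtain $(a^{\phi})^m \equiv (a^p)^m = a^{mp} \pmod{p}$, which is exactly \eqref{eq:gauss:R} for $r = 1$.

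The key auxiliary fact I would establish first is the following: if $x \equiv y \pmod{p^s}$ in $R$ with $s \geq 1$, then $x^p \equiv y^p \pmod{p^{s+1}}$. This follows from the factorization $x^p - y^p = (x - y) \sum_{i=0}^{p-1} x^{p-1-i} y^i$. The first factor is divisible by $p^s$ by hypothesis; moreover $x \equiv y \pmod{p}$ (which holds since $s \geq 1$), so each summand satisfies $x^{p-1-i} y^i \equiv y^{p-1} \pmod{p}$, whence the second factor is congruent to $p\,y^{p-1} \equiv 0 \pmod{p}$. Combining the two divisibilities yields $x^p - y^p \equiv 0 \pmod{p^{s+1}}$.

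Granting this, the inductive step is short. Assuming \eqref{eq:gauss:R} with $r$ replaced by $r - 1$ for some $r \geq 2$, that is $a^{mp^{r-1}} \equiv (a^{\phi})^{mp^{r-2}} \pmod{p^{r-1}}$, I would apply the auxiliary fact with $s = r - 1$ to the elements $x = a^{mp^{r-1}}$ and $y = (a^{\phi})^{mp^{r-2}}$. Raising to the $p$-th power increases the precision by one, giving $a^{mp^r} \equiv (a^{\phi})^{mp^{r-1}} \pmod{p^r}$, which completes the induction.

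I do not anticipate a serious obstacle; the only point requiring care is the auxiliary lemma, where one must verify that the inner sum vanishes modulo $p$, and this is where commutativity of the domain $R$ is used. Note that the ring-homomorphism structure of $\phi$ enters only through the defining congruence $a^{\phi} \equiv a^p \pmod{p}$ in the base case, while the inductive step treats $a^{\phi}$ as an arbitrary element of $R$.
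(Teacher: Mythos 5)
Your proof is correct and follows essentially the same route as the paper: induction on $r$, with the base case coming from the defining congruence $a^{\phi}\equiv a^p \pmod{p}$ and the inductive step resting on the fact that raising to the $p$-th power improves a congruence modulo $p^{s}$ to one modulo $p^{s+1}$. The only cosmetic differences are that the paper first reduces to $m=1$ and verifies the key step by binomial expansion of $\bigl((a^{\phi})^{p^{r-1}}+p^r b\bigr)^p$, whereas you keep $m$ general and use the factorization of $x^p-y^p$; both are equally valid.
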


\begin{proof}
  It suffices to prove the statement for $m = 1$. We use induction on $r$. For
  $r = 1$, the statement follows from the definition of $\phi$. Suppose that
  \eqref{eq:gauss:R} is true for some $r \geq 1$. That is, $a^{p^r} =
  (a^{\phi})^{p^{r - 1}} + p^r b$ for some $b \in R$. Raise this equality to
  the $p$th power and consider the result modulo $p^{r + 1}$. Using $p^{k r}
  \equiv 0 \pmod{p^{r + 1}}$ for all $k > 1$, we obtain
  \begin{equation*}
    a^{p^{r + 1}} \equiv (a^{\phi})^{p^r} + p (a^{\phi})^{p^{r - 1} (p - 1)}
     p^r b \equiv (a^{\phi})^{p^r} \quad (\operatorname{mod} p^{r + 1}) .
  \end{equation*}
  This completes the induction step.
\end{proof}

Extending Definition~\ref{def:gauss:laurent}, we therefore say that a Laurent
series $f = \sum_{\boldsymbol{k} \in \mathbb{Z}^n} f_{\boldsymbol{k}}
\boldsymbol{x}^{\boldsymbol{k}} \in R [[\boldsymbol{x}^{\pm 1}]]$ satisfies the
{\emph{Gauss congruences}} for the prime $p$ if $R$ has a $p$-Frobenius lift
$\phi$ and
\begin{equation*}
  f_{\boldsymbol{m}p^r} \equiv f_{\boldsymbol{m}p^{r - 1}}^{\phi} \quad
   (\operatorname{mod} p^r)
\end{equation*}
for all $\boldsymbol{m} \in \mathbb{Z}^n$ and all $r \geq 1$.

\begin{proposition}
  \label{prop:product}Let $R$ be an integral domain and let $f \in R
  [[\boldsymbol{x}^{\pm 1}]]$ with constant term $1$. If $\operatorname{supp} (f)
  \subseteq C$ for a proper cone $C$, then there exist $a_{\boldsymbol{k}} \in
  R$ such that
  \begin{equation}
    f = \prod_{\boldsymbol{k} \in C, \boldsymbol{k} \neq \boldsymbol{0}} (1 -
    a_{\boldsymbol{k}} \boldsymbol{x}^{\boldsymbol{k}}) . \label{eq:product:f}
  \end{equation}
\end{proposition}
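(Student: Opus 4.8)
The plan is to build the factors $1 - a_{\boldsymbol{k}}\boldsymbol{x}^{\boldsymbol{k}}$ one at a time, peeling them off $f$ in order of increasing ``degree'' as measured by a linear form witnessing the properness of $C$. Because the construction must work over an arbitrary integral domain $R$, I will deliberately avoid any device (such as a formal logarithm) that divides by integers, and instead determine the coefficients by a direct matching recursion.

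First I would fix a linear form $\alpha$ with $\alpha(\boldsymbol{w}) > 0$ for all nonzero $\boldsymbol{w} \in C$, and record two finiteness facts that follow from properness: the values $\alpha(\boldsymbol{k})$, as $\boldsymbol{k}$ ranges over the lattice points of $C$, form a set $0 = d_0 < d_1 < d_2 < \cdots$ with no finite accumulation point, and each slice $\{ \boldsymbol{k} \in C \cap \mathbb{Z}^n : \alpha(\boldsymbol{k}) \le t \}$ is finite. Both come from the boundedness of $C \cap \{ \alpha \le t \}$; this is precisely the grading already used implicitly for the Laurent expansions in Section~\ref{sec:laurent}. In particular, any product $\prod_{\boldsymbol{k}} (1 - a_{\boldsymbol{k}} \boldsymbol{x}^{\boldsymbol{k}})$ over nonzero $\boldsymbol{k} \in C$ is a well-defined element of $R[[\boldsymbol{x}^{\pm 1}]]$, since a monomial $\boldsymbol{x}^{\boldsymbol{n}}$ receives contributions only from factors with $\alpha(\boldsymbol{k}) \le \alpha(\boldsymbol{n})$, of which there are finitely many.

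Then I would construct the $a_{\boldsymbol{k}}$ by induction on the level $d_i$, maintaining the finite partial products $P_i = \prod_{\boldsymbol{k} \in C,\, 0 < \alpha(\boldsymbol{k}) \le d_i} (1 - a_{\boldsymbol{k}} \boldsymbol{x}^{\boldsymbol{k}})$ with the property that $P_i$ and $f$ agree on every monomial $\boldsymbol{x}^{\boldsymbol{n}}$ with $\alpha(\boldsymbol{n}) \le d_i$. The base case $P_0 = 1$ matches $f$ at $\boldsymbol{x}^{\boldsymbol{0}}$ because $f$ has constant term $1$ and $\boldsymbol{0}$ is the only lattice point of $C$ with $\alpha = 0$. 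For the step, I would observe that multiplying $P_{i-1}$ by the new factors $(1 - a_{\boldsymbol{m}} \boldsymbol{x}^{\boldsymbol{m}})$ with $\alpha(\boldsymbol{m}) = d_i$ cannot change any coefficient at a level below $d_i$, and alters the coefficient at a level-$d_i$ monomial $\boldsymbol{x}^{\boldsymbol{n}}$ only through the single term $-a_{\boldsymbol{n}}$ (every other contribution pairs a new factor of level $d_i$ with a term of $P_{i-1}$ of positive level, hence lands above $d_i$, using that $\boldsymbol{0}$ is the unique degree-zero point of $C$). Setting $a_{\boldsymbol{n}} := [\boldsymbol{x}^{\boldsymbol{n}}] P_{i-1} - [\boldsymbol{x}^{\boldsymbol{n}}] f \in R$ for each $\boldsymbol{n} \in C$ with $\alpha(\boldsymbol{n}) = d_i$ then forces $P_i$ to agree with $f$ at level $d_i$ as well. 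Since both $f$ and every $P_{i-1}$ are supported on $C$, the correction $a_{\boldsymbol{n}}$ vanishes whenever $\boldsymbol{n} \notin C$, so only factors indexed by $C$ are introduced. Passing to the limit, the infinite product of the previous paragraph agrees with $f$ on every monomial, hence equals $f$.

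The coefficient bookkeeping in the inductive step is routine once the framework is in place; the one point deserving care is the grading setup, namely that $\alpha$ yields a well-ordered, level-finite decomposition of $\operatorname{supp}(f)$ and of all the partial products. This is exactly what guarantees both that the recursion proceeds level by level and that the resulting infinite product converges in $R[[\boldsymbol{x}^{\pm 1}]]$, and it is where the hypothesis that $C$ is \emph{proper} is essential.
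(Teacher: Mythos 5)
Your proof is correct and follows essentially the same route as the paper: both fix a linear form $\alpha$ positive on the proper cone, grade the lattice points of $C$ by their $\alpha$-values, and construct the $a_{\boldsymbol{k}}$ level by level so that the partial product matches $f$ up to the current level. The only cosmetic difference is that the paper reads off the new coefficients from $g = f \cdot (\text{partial product})^{-1} \equiv 1$ modulo the ideal of higher-level terms, whereas you compare coefficients of the partial product with those of $f$ directly; the underlying induction is identical.
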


\begin{proof}
  Choose $\alpha$ to be a linear form such that $\alpha (\boldsymbol{w}) > 0$
  for all nonzero $\boldsymbol{w} \in C$, with the additional property that
  $\alpha (\boldsymbol{w}) \in \mathbb{Z}$ for $\boldsymbol{w} \in
  \mathbb{Z}^n$. For $r \in \mathbb{Z}_{> 0}$, let $I_r$ be the ideal
  consisting of Laurent series
  \begin{equation*}
    \sum_{\boldsymbol{k} \in C, \alpha (\boldsymbol{k}) \geq r}
     g_{\boldsymbol{k}} \boldsymbol{x}^{\boldsymbol{k}} .
  \end{equation*}
  We now construct the exponents $a_{\boldsymbol{k}}$ using induction on $\alpha
  (\boldsymbol{k})$. Initialization follows from the observation that $f \equiv
  1 \pmod{I_1}$. Suppose we have constructed
  $a_{\boldsymbol{k}} \in R$, for all $\boldsymbol{k} \in C$ with $\alpha
  (\boldsymbol{k}) < r$, such that
  \begin{equation}
    f \equiv \prod_{\boldsymbol{k} \in C, \boldsymbol{k} \neq \boldsymbol{0}, \alpha
    (\boldsymbol{k}) < r} (1 - a_{\boldsymbol{k}} \boldsymbol{x}^{\boldsymbol{k}})
    \quad (\operatorname{mod} I_r) . \label{eq:product:I}
  \end{equation}
  Consider the Laurent series
  \begin{equation*}
    g = f \prod_{\boldsymbol{k} \in C, \boldsymbol{k} \neq \boldsymbol{0}, \alpha
     (\boldsymbol{k}) < r} (1 - a_{\boldsymbol{k}} \boldsymbol{x}^{\boldsymbol{k}})^{-
     1} = \sum_{\boldsymbol{k} \in C} g_{\boldsymbol{k}}
     \boldsymbol{x}^{\boldsymbol{k}},
  \end{equation*}
  and note that $g \equiv 1 \pmod{I_r}$. For $\boldsymbol{k}
  \in C$ with $\alpha (\boldsymbol{k}) = r$, we now simply choose
  $a_{\boldsymbol{k}} = g_{\boldsymbol{k}}$. By construction, \eqref{eq:product:I}
  then holds with $r$ replaced by $r + 1$.
\end{proof}

Recall that $\theta_i = x_i  \frac{\partial}{\partial x_i}$ denotes the Euler
operator. Suppose that $f \in R [[\boldsymbol{x}^{\pm 1}]]$ is a Laurent series
satisfying the conditions of Proposition~\ref{prop:product}, so that
\eqref{eq:product:f} holds. Then,
\begin{equation*}
  \frac{\theta_i f}{f} = - \sum_{\boldsymbol{k} \in C, \boldsymbol{k} \neq
   \boldsymbol{0}} \frac{k_i a_{\boldsymbol{k}} \boldsymbol{x}^{\boldsymbol{k}}}{1 -
   a_{\boldsymbol{k}} \boldsymbol{x}^{\boldsymbol{k}}} \in R [[\boldsymbol{x}^{\pm
   1}]] .
\end{equation*}
If $R$ is a domain with $p$-Frobenius lift $\phi$, then a brief argument and
Lemma~\ref{lem:powers:frob} show that $k_i a_{\boldsymbol{k}}
\boldsymbol{x}^{\boldsymbol{k}} / (1 - a_{\boldsymbol{k}}
\boldsymbol{x}^{\boldsymbol{k}})$ satisfies the Gauss congruences for $p$. We
conclude that $\theta_i f / f$, as a sum of such terms, satisfies the Gauss
congruences for $p$. This is the case $m = 1$ of the next result, which
generalizes this observation.

\begin{theorem}
  \label{thm:det}Let $R$ be a domain with $p$-Frobenius lift $\phi$. Let $m
  \leq n$ and let $f_1, \ldots, f_m \in R [[\boldsymbol{x}^{\pm 1}]]$, each
  with constant term equal to $1$ and $\operatorname{supp} (f_j) \subseteq C$ for a
  proper cone $C$. Then the Laurent series
  \begin{equation}
    F = \det \left(\frac{\theta_i f_j}{f_j} \right)_{i, j = 1, \ldots, m}
    \label{eq:det}
  \end{equation}
  satisfies the Gauss congruences for $p$.
\end{theorem}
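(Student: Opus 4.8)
The plan is to pass to the product representation of Proposition~\ref{prop:product} and then expand the determinant by multilinearity. First I would write each factor as $f_j = \prod_{\boldsymbol{k}\in C,\,\boldsymbol{k}\neq\boldsymbol{0}}(1 - a^{(j)}_{\boldsymbol{k}}\boldsymbol{x}^{\boldsymbol{k}})$ with $a^{(j)}_{\boldsymbol{k}}\in R$, so that
\[ \frac{\theta_i f_j}{f_j} = -\sum_{\boldsymbol{k}\in C,\,\boldsymbol{k}\neq\boldsymbol{0}} k_i\,\frac{a^{(j)}_{\boldsymbol{k}}\boldsymbol{x}^{\boldsymbol{k}}}{1-a^{(j)}_{\boldsymbol{k}}\boldsymbol{x}^{\boldsymbol{k}}}. \]
Expanding \eqref{eq:det} by multilinearity in its $m$ columns, and collecting the sign-weighted sum over permutations into the determinant of the exponent vectors, gives
\[ F = (-1)^m \sum_{\boldsymbol{k}^{(1)},\ldots,\boldsymbol{k}^{(m)}} \det\big(k^{(j)}_i\big)_{i,j=1}^m \prod_{j=1}^m \frac{a^{(j)}_{\boldsymbol{k}^{(j)}}\boldsymbol{x}^{\boldsymbol{k}^{(j)}}}{1-a^{(j)}_{\boldsymbol{k}^{(j)}}\boldsymbol{x}^{\boldsymbol{k}^{(j)}}}, \]
the sum running over $\boldsymbol{k}^{(j)}\in C\setminus\{\boldsymbol{0}\}$. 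Writing $\Delta=\det(k^{(j)}_i)_{i,j=1}^m$ and expanding each geometric factor, the coefficient of $\boldsymbol{x}^{\boldsymbol{n}}$ becomes a finite sum (finiteness from the properness of $C$) over configurations $(\boldsymbol{k}^{(j)},t_j)_{j=1}^m$ with $t_j\geq 1$ and $\sum_j t_j\boldsymbol{k}^{(j)}=\boldsymbol{n}$, of the terms $\Delta\prod_j (a^{(j)}_{\boldsymbol{k}^{(j)}})^{t_j}$.

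To verify $F_{\boldsymbol{m}p^r}\equiv F_{\boldsymbol{m}p^{r-1}}^{\phi}\pmod{p^r}$, I would pair the configurations for $\boldsymbol{n}=\boldsymbol{m}p^r$ with those for $\boldsymbol{m}p^{r-1}$ through the scaling $t_j\mapsto p t_j$, which leaves the directions $\boldsymbol{k}^{(j)}$, and hence $\Delta$, unchanged. This identifies the configurations for $\boldsymbol{m}p^r$ in which every $t_j$ is divisible by $p$ bijectively with all configurations for $\boldsymbol{m}p^{r-1}$. Consequently $F_{\boldsymbol{m}p^r}-F_{\boldsymbol{m}p^{r-1}}^{\phi}$ splits into a sum over level-$(r-1)$ configurations of $\Delta\big[\prod_j (a^{(j)}_{\boldsymbol{k}^{(j)}})^{p s_j}-\prod_j ((a^{(j)}_{\boldsymbol{k}^{(j)}})^{\phi})^{s_j}\big]$, plus a sum over those level-$r$ configurations in which some $t_j$ is coprime to $p$. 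For the first sum, a telescoping of the product combined with Lemma~\ref{lem:powers:frob} shows that each bracket is divisible by $p^{1+\min_j \nu_p(s_j)}$; for the second, each term must instead be controlled directly through $\nu_p(\Delta)$.

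The crux, which I expect to be the main obstacle, is a $p$-adic Cramer estimate tying the determinant weight to the exponents. Reading off the first $m$ coordinates of $\sum_j t_j\boldsymbol{k}^{(j)}=\boldsymbol{m}p^r$ yields a linear system $K\boldsymbol{t}=p^r\boldsymbol{m}'$, where $K=(k^{(j)}_i)_{i,j=1}^m$ has determinant $\Delta$ and $\boldsymbol{m}'$ is integral; when $\Delta\neq 0$, Cramer's rule gives $\Delta\, t_j\in p^r\mathbb{Z}$, so $\nu_p(\Delta)+\nu_p(t_j)\geq r$ for every $j$, while the same computation at level $r-1$ gives $\nu_p(\Delta)+\nu_p(s_j)\geq r-1$ (configurations with $\Delta=0$ contribute nothing). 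For the second sum this forces $\nu_p(\Delta)\geq r$ as soon as some $t_j$ is coprime to $p$, so that term vanishes modulo $p^r$; for the first sum it gives $\nu_p(\Delta)+1+\min_j\nu_p(s_j)\geq r$, which together with the telescoping bound makes each term vanish modulo $p^r$. As all of these are divisibilities by powers of $p$ in $\mathbb{Z}$ and in $R$, which combine multiplicatively, summing over configurations yields the desired congruence. This estimate is precisely what lets the determinant—rather than a bare product of logarithmic derivatives—satisfy the Gauss congruences, generalizing the role of the single factor $k_i$ when $m=1$.
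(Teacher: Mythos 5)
Your proposal is correct and follows essentially the same route as the paper: Proposition~\ref{prop:product} to factor each $f_j$, expansion into terms weighted by the determinant of exponent vectors, Lemma~\ref{lem:powers:frob} for the Frobenius congruence on the monomial coefficients, and the Cramer-type estimate $\nu_p(\Delta)+\nu_p(t_j)\geq\nu_p(K\boldsymbol{t})$ as the crux. The only differences are organizational (you avoid the paper's padding to $m=n$ by reading off just the first $m$ coordinates, and you merge the paper's binomial special case with the general expansion), and these do not change the substance of the argument.
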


\begin{proof}
  Suppose that $m < n$. If we define $f_i = x_i$ for $i = m + 1, m + 2,
  \ldots, n$, then the original $m \times m$ determinant \eqref{eq:det}
  obtained from $f_1, \ldots, f_m$ is the same as the $n \times n$ determinant
  obtained from $f_1, \ldots, f_n$. In the sequel, we may therefore assume
  that $m = n$.
  
  We start with the special case $f_j = 1 - a_j
  \boldsymbol{x}^{\boldsymbol{k}^{(j)}}$ with $\boldsymbol{k}^{(j)} = (k^{(j)}_1,
  \ldots, k^{(j)}_n) \in C$ and $a_j \in R$. Let $K$ be the $n \times n$
  matrix with entries $k_i^{(j)}$ with $i, j = 1, 2, \ldots, n$. Then,
  \begin{equation*}
    F = \det \left(\frac{- k^{(j)}_i a_j
     \boldsymbol{x}^{\boldsymbol{k}^{(j)}}}{1 - a_j
     \boldsymbol{x}^{\boldsymbol{k}^{(j)}}} \right)_{i, j = 1, \ldots, n} = (-
     1)^n \det (K) \prod_{j = 1}^n \frac{a_j
     \boldsymbol{x}^{\boldsymbol{k}^{(j)}}}{1 - a_j
     \boldsymbol{x}^{\boldsymbol{k}^{(j)}}},
  \end{equation*}
  which can be expanded as
  \begin{equation*}
    F = (- 1)^n \det (K) \sum_{r_1, \ldots, r_n \geq 1}^n \left(\prod_{j = 1}^n a_j^{r_j} \right) \boldsymbol{x}^{r_1 \boldsymbol{k}^{(1)} +
     \cdots + r_n \boldsymbol{k}^{(n)}} = \sum_{\boldsymbol{k} \in C}
     c_{\boldsymbol{k}} \boldsymbol{x}^{\boldsymbol{k}} .
  \end{equation*}
  If $\det (K) = 0$, then all terms are zero and the claim is trivially true.
  So, let us assume $\det (K) \neq 0$. Then the exponents $r_1
  \boldsymbol{k}^{(1)} + \cdots + r_n \boldsymbol{k}^{(n)} = K\boldsymbol{r}$ are in
  one-to-one correspondence with the $n$-tuples $\boldsymbol{r}= (r_1, \ldots,
  r_n)$.
  
  First, suppose that $\boldsymbol{r}$ is such that $p$ divides $\boldsymbol{r}$.
  Then, by Lemma~\ref{lem:powers:frob},
  \begin{equation*}
    \boldsymbol{a}^{\boldsymbol{r}} = \prod_{j = 1}^n a_j^{r_j} \equiv \prod_{j =
     1}^n (a_j^{\phi})^{r_j / p} = (\boldsymbol{a}^{\phi})^{\boldsymbol{r}/ p}
     \pmod{p^{\nu_p (\boldsymbol{r}}}),
  \end{equation*}
  so that
  \begin{eqnarray*}
    c_{K\boldsymbol{r}} & = & (- 1)^n \det (K) \boldsymbol{a}^{\boldsymbol{r}}\\
    & \equiv & (- 1)^n \det (K) (\boldsymbol{a}^{\phi})^{\boldsymbol{r}/ p}\\
    & = & ((- 1)^n \det (K) \boldsymbol{a}^{\boldsymbol{r}/ p})^{\phi} =
    c_{K\boldsymbol{r}/ p}^{\phi} \quad (\operatorname{mod} p^{\nu_p (\boldsymbol{r}) +
    \nu_p (\det (K))}) .
  \end{eqnarray*}
  Some linear algebra shows that $\nu_p (K\boldsymbol{r}) \leq \nu_p (\det
  (K)) + \nu_p (\boldsymbol{r})$. Therefore, $c_{K\boldsymbol{r}} \equiv
  c_{K\boldsymbol{r}/ p}^{\phi} \pmod{p^{\nu_p
  (K\boldsymbol{r}}})$. Next, suppose that $p$ does not divide $\boldsymbol{r}$.
  Then $\nu_p (K\boldsymbol{r}) \leq \nu_p (\det (K))$ and, thus,
  $c_{K\boldsymbol{r}} = (- 1)^n \det (K) \boldsymbol{a}^{\boldsymbol{r}}$ is
  divisible by $p^{\nu_p (K\boldsymbol{r})}$, whereas $c_{K\boldsymbol{r}/ p} =
  0$. Hence, the congruence $c_{K\boldsymbol{r}} \equiv c_{K\boldsymbol{r}/
  p}^{\phi} \pmod{p^{\nu_p (K\boldsymbol{r}}})$ holds again.
  We conclude that $F$ satisfies the Gauss congruences for the prime $p$.
  
  In the case of general Laurent series $f_j$, we consider the differential
  form
  \begin{equation*}
    \Omega = \frac{\md f_1}{f_1} \wedge \cdots \wedge \frac{\md
     f_n}{f_n}
  \end{equation*}
  and observe that the coefficient of $\md x_1 \wedge \cdots \wedge \md
  x_n / (x_1 \cdots x_n)$ is given by $F$. On the other hand, it follows from
  Proposition~\ref{prop:product} that
  \begin{equation*}
    f_j = \prod_{\boldsymbol{k} \in C, \boldsymbol{k} \neq \boldsymbol{0}} (1 -
     a^{(j)}_{\boldsymbol{k}} \boldsymbol{x}^{\boldsymbol{k}}),
  \end{equation*}
  for $a^{(j)}_{\boldsymbol{k}} \in R$, so that
  \begin{equation*}
    \Omega = (- 1)^n \sum_{\substack{
       \boldsymbol{k}^{(1)}, \ldots, \boldsymbol{k}^{(n)} \in C\\
       \boldsymbol{k}^{(1)}, \ldots, \boldsymbol{k}^{(n)} \neq \boldsymbol{0}
     }} \left(\prod_{j = 1}^n a^{(j)}_{\boldsymbol{k}} \right)
     \frac{\md \boldsymbol{x}^{\boldsymbol{k}^{(1)}}}{1 -
     a^{(1)}_{\boldsymbol{k}} \boldsymbol{x}^{\boldsymbol{k}^{(1)}}} \wedge \cdots
     \wedge \frac{\md \boldsymbol{x}^{\boldsymbol{k}^{(n)}}}{1 -
     a^{(n)}_{\boldsymbol{k}} \boldsymbol{x}^{\boldsymbol{k}^{(n)}}} .
  \end{equation*}
  From our initial special case, we see that the coefficient of $\md x_1
  \wedge \cdots \wedge \md x_n / (x_1 \cdots x_n)$ in each term satisfies
  the Gauss congruences for $p$. Hence, the same holds for their sum, which
  equals $F$.
\end{proof}

\begin{theorem}
  \label{thm:det:rat}Let $m \leq n$ and let $f_1, \ldots, f_m \in
  \mathbb{Q} (\boldsymbol{x})$ be nonzero. Then the rational function
  \begin{equation}
    \det \left(\frac{\theta_i f_j}{f_j} \right)_{i, j = 1, \ldots, m}
    \label{eq:det:rat}
  \end{equation}
  has the Gauss property.
\end{theorem}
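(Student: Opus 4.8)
The plan is to derive this rational-function statement from its Laurent-series counterpart, Theorem~\ref{thm:det}, by expanding each $f_j$ into a Laurent series and then reducing the determinant, through multilinearity, to building blocks that Theorem~\ref{thm:det} already controls. The elementary fact I will use repeatedly is that the logarithmic derivative $f\mapsto \theta_i f/f$ is additive on products, so that $\theta_i(fg)/(fg)=\theta_i f/f+\theta_i g/g$ and $\theta_i(\boldsymbol{x}^{\boldsymbol{v}})/\boldsymbol{x}^{\boldsymbol{v}}=v_i$; in particular it annihilates nonzero scalars and converts a monomial factor into a constant column.

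First I would fix a single linear form $\alpha$ with positive and generic (rationally independent) coefficients. Expansion of a rational function with respect to $\alpha$ is an embedding of $\mathbb{Q}(\boldsymbol{x})$ into a field of Laurent series that commutes with every $\theta_i$, hence with the determinant; so the Laurent expansion of $F=\det(\theta_i f_j/f_j)$ is the determinant of the expansions of its entries. For each $j$, let $\boldsymbol{v}_j\in\mathbb{Z}^n$ be the exponent of the $\alpha$-leading monomial of the expansion of $f_j$, and write that expansion as $c_j\boldsymbol{x}^{\boldsymbol{v}_j}\hat f_j$, where $\hat f_j\in\mathbb{Q}[[\boldsymbol{x}^{\pm 1}]]$ has constant term $1$ and support in a finitely generated proper cone. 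Because $\alpha$ is positive and generic, a single finitely generated proper cone $C$ contains the supports of all the $\hat f_j$ together with the one-point supports of the variables $x_1,\ldots,x_n$. With this normalization, $\theta_i f_j/f_j$ expands as $(\boldsymbol{v}_j)_i+\theta_i\hat f_j/\hat f_j$.

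Next I would expand $F$ by multilinearity in the columns of the matrix. The $j$-th column equals $\boldsymbol{v}_j+(\theta_i\hat f_j/\hat f_j)_i$, and writing $\boldsymbol{v}_j=\sum_{i'}(v_j)_{i'}\boldsymbol{e}_{i'}$ with $\boldsymbol{e}_{i'}=(\theta_i x_{i'}/x_{i'})_i$ exhibits the constant part as a $\mathbb{Z}$-combination of the columns produced by the variables $x_{i'}$. Multilinearity then writes $F$ as a finite $\mathbb{Z}$-linear combination of determinants $\det(\theta_i\chi_j/\chi_j)_{i,j=1,\ldots,m}$, in which each $\chi_j$ is either a variable $x_{i'}$ or one of the constant-term-$1$ series $\hat f_j$, all supported on $C$. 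In each such determinant the columns coming from variables are constant unit vectors, so Laplace expansion along them reduces it to a determinant $\det(\theta_i\hat f_j/\hat f_j)$ involving a sub-collection of the $\hat f_j$ and an equally sized sub-collection of the Euler operators; after permuting variables this is exactly the situation of Theorem~\ref{thm:det} (over $R=\mathbb{Z}_p$, or $\mathbb{Z}[1/N]$, with the identity Frobenius lift), so it satisfies the Gauss congruences for all but finitely many $p$. Since the Gauss congruences survive $\mathbb{Q}$-linear combinations away from the finitely many primes dividing the relevant denominators, $F$ satisfies them as well; by Proposition~\ref{prop:PQv} this conclusion is independent of the expansion, so $F$ has the Gauss property.

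The main obstacle is the constant shift $\boldsymbol{v}_j$. The logarithmic derivative discards the scalar and monomial factors of $f_j$, whereas the determinant does not, so one cannot merely replace $f_j$ by $\hat f_j$ and quote Theorem~\ref{thm:det}. Re-encoding each $\boldsymbol{v}_j$ through the log-derivatives of the variables $x_{i'}$, and then clearing those variable columns by Laplace expansion, is the device that makes the reduction go through; its correctness hinges on the fact that Theorem~\ref{thm:det} applies to every square sub-matrix built from a subset of the Euler operators, which is the content of the $m<n$ reduction at the start of its proof. A secondary, more routine, point is the cone bookkeeping: one must verify that a single generic $\alpha$ simultaneously places all the $\hat f_j$ and all the variables in one proper cone, so that the entire multilinear expansion lives in one ring of Laurent series and the step identifying the expansion of the determinant with the determinant of the expansions is legitimate.
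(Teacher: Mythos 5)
Your argument is correct and follows essentially the same route as the paper: a generic linear form produces a common proper cone and the $\alpha$-leading monomials, the monomial/constant prefactors are absorbed into columns built from $\theta_i x_{i'}/x_{i'}=\delta_{ii'}$ and cleared by multilinearity and Laplace expansion, and the remaining constant-term-$1$ determinants are handled by Theorem~\ref{thm:det}. The only cosmetic difference is that you expand the rational functions directly into Laurent series rather than first using the logarithmic property to reduce to Laurent polynomials, which does not change the substance of the proof.
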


\begin{proof}
  Let us write $D (f_1, \ldots, f_m)$ for \eqref{eq:det:rat}. Observe that
  this quantity is logarithmic in each of its arguments $f_j$. That is, for
  instance,
  \begin{equation*}
    D (g h, f_2, \ldots, f_m) = D (g, f_2, \ldots, f_m) + D (h, f_2, \ldots,
     f_m) .
  \end{equation*}
  From this logarithmic property it follows that it suffices to restrict to
  Laurent polynomials $f_1, \ldots, f_m \in \mathbb{Z} [\boldsymbol{x}^{\pm
  1}]$ (we could further restrict to polynomials but the argument to follow
  works with Laurent polynomials).
  
  We next determine vertices $\boldsymbol{v}_i$ of $N (f_i)$ such that $f_i
  /\boldsymbol{x}^{\boldsymbol{v}_i}$ are Laurent polynomials with support in the
  same proper cone $C$. To that end, let $\alpha$ be a linear form on
  $\mathbb{R}^n$ whose coefficients are $\mathbb{Q}$-linearly independent.
  For each $i = 1, \ldots, m$, let $c_i$ be the minimum of the set $\{ \alpha
  (\boldsymbol{x}) : \boldsymbol{x} \in N (f_i) \}$. Then $N (f_i)$ is contained
  in the half-space $\alpha (\boldsymbol{x}) \geq c_i$. Moreover, because
  the coefficients of $\alpha$ are $\mathbb{Q}$-linearly independent, the
  hyperplane $\alpha (\boldsymbol{x}) = c_i$ intersects $N (f_i)$ in a unique
  vertex $\boldsymbol{v}_i$. Hence, $N (f_i /\boldsymbol{x}^{\boldsymbol{v}_i})$ is
  contained in the half-space $\alpha (\boldsymbol{x}) \geq 0$, and the
  intersection of $N (f_i /\boldsymbol{x}^{\boldsymbol{v}_i})$ and $\alpha
  (\boldsymbol{x}) = 0$ consists of only the point $\boldsymbol{0}$. Let $C$ be
  the cone spanned by all the $N (f_i /\boldsymbol{x}^{\boldsymbol{v}_i})$. Note
  that $C$ is proper because $\alpha (\boldsymbol{x}) > 0$ for all nonzero
  $\boldsymbol{x} \in C$.
  
  By construction, the Laurent polynomials $g_i = f_i
  /\boldsymbol{x}^{\boldsymbol{v}_i}$ are supported on $C$ and $\boldsymbol{0}$ is a
  vertex of $N (g_i)$. Using the logarithmic property of \eqref{eq:det:rat},
  and the fact that $\partial x_j / \partial x_i = 0$ if $i \neq j$, it
  follows that $D (f_1, \ldots, f_m)$ is a $\mathbb{Z}$-linear combination of
  terms of the form $D (g_{i_1}, \ldots, g_{i_s})$ with $1 \leq i_1 <
  \ldots < i_s \leq m$. On the other hand, it follows from
  Theorem~\ref{thm:det} that $D (g_{i_1}, \ldots, g_{i_s})$ has the Gauss
  property, from which we conclude that $D (f_1, \ldots, f_m)$ has the Gauss
  property as well.
\end{proof}

\section{A classification result}\label{sec:linear}

We begin with a somewhat technical but general result. Observe that the case
$r = 0$ implies that $q_{\boldsymbol{k}} \boldsymbol{x}^{\boldsymbol{k}} / Q$ in
\eqref{eq:Qdet} satisfies Gauss congruences. Likewise, the determinant in
\eqref{eq:Qdet} satisfies Gauss congruences by Theorem~\ref{thm:det}. That is,
both factors of \eqref{eq:Qdet} satisfy Gauss congruences. However, if two
Laurent series satisfy Gauss congruences, then it is not the case, in general,
that their product satisfies Gauss congruences as well.

\begin{proposition}
  \label{prop:Qdet}Let $Q \in \mathbb{Z}_p [[\boldsymbol{x}, \boldsymbol{y}]]$
  with constant term $1$. Suppose that $Q$ is linear in the variables
  $\boldsymbol{x}= x_1, \ldots, x_n$ (but not necessarily in $\boldsymbol{y}= y_1,
  \ldots, y_m$). Further, for $0 \leq r \leq m$, let $f_1, \ldots,
  f_r \in \mathbb{Z}_p [[\boldsymbol{y}]]$ with constant term $1$. Write $Q =
  \sum_{\boldsymbol{k}} q_{\boldsymbol{k}} (\boldsymbol{y})
  \boldsymbol{x}^{\boldsymbol{k}}$. Then, for any $\boldsymbol{k}$,
  \begin{equation}
    \frac{q_{\boldsymbol{k}} \boldsymbol{x}^{\boldsymbol{k}}}{Q} \det \left(\frac{\theta_i f_j}{f_j} \right)_{i, j = 1, \ldots, r} \label{eq:Qdet}
  \end{equation}
  satisfies Gauss congruences for the prime $p$.
\end{proposition}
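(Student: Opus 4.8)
The plan is to build on the two observations recorded in the remark preceding the statement. The determinant factor of \eqref{eq:Qdet} satisfies the Gauss congruences by Theorem~\ref{thm:det}, applied over $R=\mathbb{Z}_p$ with the identity as $p$-Frobenius lift (Fermat gives $a\equiv a^p$ in $\mathbb{Z}_p/p$), the $f_j$ being supported on the proper cone $\mathbb{Z}_{\geq 0}^m$; and the prefactor $q_{\boldsymbol{k}}\boldsymbol{x}^{\boldsymbol{k}}/Q$ is exactly the $r=0$ case of the very statement. Since a product of two series with the Gauss property generally fails to have it, the whole point is to use that $Q$ is linear in each $x_i$ in order to fold both factors of \eqref{eq:Qdet} into determinants of logarithmic derivatives. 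First I would isolate the elementary engine: because $f_1,\ldots,f_r$ involve only $\boldsymbol{y}$, adjoining the differentiation variable $x_i$ and the function $Q$ to $D$ yields a block-triangular matrix, so that
\begin{equation*}
  \frac{\theta_{x_i}Q}{Q}\,D=\det\left(\frac{\theta_{z_a}h_b}{h_b}\right)_{a,b=0}^{r},
\end{equation*}
with $(z_0,\ldots,z_r)=(x_i,y_1,\ldots,y_r)$ and $(h_0,\ldots,h_r)=(Q,f_1,\ldots,f_r)$. All of these functions have constant term $1$ and are supported on the positive orthant, so $\frac{\theta_{x_i}Q}{Q}\,D$ satisfies the Gauss congruences by Theorem~\ref{thm:det}.

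Next I would induct on $n$, the number of variables in which $Q$ is linear. Write $Q=Q_0+x_nQ_1$ with $Q_0=Q|_{x_n=0}$, which is linear in $x_1,\ldots,x_{n-1}$ and still has constant term $1$, and $Q_1=\partial Q/\partial x_n$. The basic identity is $Q_0/Q=1-\theta_{x_n}Q/Q$. For $\boldsymbol{k}$ with $k_n=0$ this gives
\begin{equation*}
  \frac{q_{\boldsymbol{k}}\boldsymbol{x}^{\boldsymbol{k}}}{Q}\,D
  =\frac{q_{\boldsymbol{k}}\boldsymbol{x}^{\boldsymbol{k}}}{Q_0}\,D
   -\frac{q_{\boldsymbol{k}}\boldsymbol{x}^{\boldsymbol{k}}}{Q_0}\cdot\frac{\theta_{x_n}Q}{Q}\,D,
\end{equation*}
where the first term is an instance of the statement in one fewer denominator variable and the second has the shape (monomial of $Q_0$)$/Q_0$ times the enlarged determinant of the previous paragraph. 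For $k_n=1$ one peels off a logarithmic-derivative factor via $q_{\boldsymbol{k}}\boldsymbol{x}^{\boldsymbol{k}}/Q=(\theta_{x_n}Q/Q)\cdot(q'_{\boldsymbol{k}'}\boldsymbol{x}'^{\boldsymbol{k}'}/Q_1)$, where $q'_{\boldsymbol{k}'}\boldsymbol{x}'^{\boldsymbol{k}'}$ is the corresponding monomial of $Q_1$. In each case the denominator loses the variable $x_n$, so the recursion is on $n$, and the pieces are recombined using that the Gauss congruences are preserved under $\mathbb{Z}_p$-linear combinations (the lift $\phi$ being a ring homomorphism). Organizing all the sign patterns at once, the reduction amounts to the inclusion–exclusion $q_{\boldsymbol{k}}\boldsymbol{x}^{\boldsymbol{k}}=\prod_{i:k_i=1}\theta_{x_i}\prod_{i:k_i=0}(1-\theta_{x_i})\,Q$, so it suffices to treat each $\frac{1}{Q}\left(\prod_{i\in U}\theta_{x_i}Q\right)D$.

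To make the induction genuinely close, I would run it on a more flexible statement in which $D$ is allowed to be an arbitrary determinant of logarithmic derivatives of constant-term-$1$ series on the common proper cone, with functions permitted to involve the denominator variables. The main obstacle is exactly the bookkeeping this forces. On the one hand, the function $Q$ absorbed into the determinant still depends on the remaining variables $x_1,\ldots,x_{n-1}$, so at the next stage the adjoined row is no longer block triangular and produces cross terms. On the other hand, the partial piece $Q_1$ need not have constant term $1$, and its leading coefficient need not even be a $p$-adic unit, so $q'_{\boldsymbol{k}'}\boldsymbol{x}'^{\boldsymbol{k}'}/Q_1$ cannot be split off as an honest power series. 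Both difficulties I would resolve precisely as in the proof of Theorem~\ref{thm:det:rat}: factor the leading monomial out of each non-normalized function and use the logarithmic (additive) property of determinants of logarithmic derivatives to rewrite every determinant that arises as a $\mathbb{Z}_p$-linear combination of determinants of constant-term-$1$ series supported on a single proper cone, to which Theorem~\ref{thm:det} applies directly. Verifying that this normalization is compatible with the recursion—so that each term generated by the inclusion–exclusion is again of the required form and the auxiliary denominators cancel—is the technical heart of the argument, and is where the linearity of $Q$ in each $x_i$ is used decisively.
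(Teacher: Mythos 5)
You have correctly isolated the two structural ingredients that the paper's own proof also uses: the inclusion--exclusion identity $q_{\boldsymbol{k}} \boldsymbol{x}^{\boldsymbol{k}} = \prod_i \theta_i^{k_i} (1 - \theta_i)^{1 - k_i} Q$, which reduces the claim to showing that $\frac{\theta_{i_1} \cdots \theta_{i_\ell} Q}{Q} D$ satisfies the Gauss congruences, and the device of absorbing the prefactor into an enlarged determinant via block triangularity (your computation of $\frac{\theta_{x_i} Q}{Q} D$ as an $(r + 1) \times (r + 1)$ determinant is exactly the case $\ell = 1$). But your induction on $n$ does not close, for precisely the two reasons you name and then defer. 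In the $k_n = 0$ case, the decomposition $Q = Q_0 + x_n Q_1$ produces the term $\frac{q_{\boldsymbol{k}} \boldsymbol{x}^{\boldsymbol{k}}}{Q_0} \cdot \frac{\theta_{x_n} Q}{Q} D$, which is a product of a monomial-over-$Q_0$ factor with a determinant one of whose arguments is $Q$ itself; since $Q$ still depends on $x_1, \ldots, x_{n - 1}$, the inductive hypothesis (whose determinant must be free of the denominator variables) does not apply, and the block triangularity needed to absorb the next $\theta_{x_i}$ is destroyed. The normalization-plus-logarithmicity trick from the proof of Theorem~\ref{thm:det:rat} rewrites one determinant of logarithmic derivatives as a sum of normalized ones; it does not convert a product of a Gauss-property prefactor with a Gauss-property determinant into a single determinant, which is the actual obstruction. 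As written, the ``technical heart'' you postpone is the entire proof.

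The missing idea --- and the one place where the linearity of $Q$ is used decisively in the paper --- is to differentiate $Q$ cumulatively rather than restrict it. Set $g_j = \frac{\partial}{\partial x_{j - 1}} \cdots \frac{\partial}{\partial x_1} Q$ for $j = 1, \ldots, \ell$, so $g_1 = Q$. Because $Q$ is linear in each $x_i$, the function $g_j$ is independent of $x_1, \ldots, x_{j - 1}$, and $\theta_{x_j} g_j = x_j g_{j + 1}$, so the product $\prod_{j = 1}^{\ell} \frac{\theta_{x_j} g_j}{g_j}$ telescopes to $\frac{\theta_1 \cdots \theta_\ell Q}{Q}$. A single application of Theorem~\ref{thm:det} to the system $g_1, \ldots, g_\ell, f_1, \ldots, f_r$ with differentiation variables $x_1, \ldots, x_\ell, y_1, \ldots, y_r$ then finishes the proof: the matrix is block triangular ($\theta_{x_i} g_j = 0$ for $i < j$ and $\theta_{x_i} f_j = 0$), so its determinant equals $\frac{\theta_1 \cdots \theta_\ell Q}{Q} D$, with no induction and no products of separately-Gauss series. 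Your chain $Q, Q, Q, \ldots$ should be replaced by the chain $Q, \partial_{x_1} Q, \partial_{x_2} \partial_{x_1} Q, \ldots$. Your secondary worry about constant terms is legitimate (the $g_j$ for $j \geq 2$ need not have constant term $1$ either), but once everything sits inside one determinant it is disposed of exactly as in Theorem~\ref{thm:det:rat}, by factoring out leading monomials and using the logarithmic property; it is not the essential difficulty.
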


\begin{proof}
  Let $\boldsymbol{k}= (k_1, \ldots, k_n) \in \{ 0, 1 \}^n$. Notice that
  \begin{equation*}
    \left[ \prod_{j = 1}^n \theta_j^{k_j} (1 - \theta_j)^{1 - k_j} \right] Q
     = q_{\boldsymbol{k}} \boldsymbol{x}^{\boldsymbol{k}} .
  \end{equation*}
  Hence, it suffices to show that \eqref{eq:Qdet} holds with $q_{\boldsymbol{k}}
  \boldsymbol{x}^{\boldsymbol{k}}$ replaced with any product of $\theta_j$ applied
  to $Q$. Without loss of generality, we consider the product $\theta_1 \cdots
  \theta_{\ell} Q$.
  
  For $j = 1, \ldots, \ell$, define
  \begin{equation*}
    g_j = \frac{\partial}{\partial x_{j - 1}} \cdots \frac{\partial}{\partial
     x_2} \frac{\partial}{\partial x_1} Q,
  \end{equation*}
  so that $g_1 = Q$. Note that $g_j$ does not depend on the variables $x_1,
  \ldots, x_{j - 1}$ because $Q$ is linear in the variables $x_1, \ldots,
  x_n$. Therefore, applying Theorem~\ref{thm:det} to $g_1, \ldots, g_{\ell},
  f_1, \ldots, f_r$, we find that
  \begin{equation*}
    \frac{\theta_1 g_1}{g_1} \cdots \frac{\theta_{\ell} g_{\ell}}{g_{\ell}}
     \det \left(\frac{\theta_i f_j}{f_j} \right)_{i, j = 1, \ldots, r}
  \end{equation*}
  satisfies the Gauss congruences for $p$. Observe that, since $\theta_j g_j =
  x_j g_{j + 1}$,
  \begin{equation*}
    \frac{\theta_1 g_1}{g_1} \cdots \frac{\theta_{\ell} g_{\ell}}{g_{\ell}} =
     \frac{\theta_1 \cdots \theta_{\ell} Q}{Q} .
  \end{equation*}
  We have therefore shown that \eqref{eq:Qdet} indeed holds with
  $q_{\boldsymbol{k}} \boldsymbol{x}^{\boldsymbol{k}}$ replaced with any product of
  $\theta_j$ applied to $Q$.
\end{proof}

\begin{example}
  Take $Q (x, y) = \frac{1}{f (y)} - x$ for some
  \begin{equation*}
    f = \sum_{k = 0}^{\infty} a_k y^k \in \mathbb{Z} [[y]]^{\times} .
  \end{equation*}
  Since $Q (x, y)$ is linear in $x$, Proposition~\ref{prop:Qdet} implies that
  \begin{equation*}
    \frac{1 / f (y)}{Q (x, y)} = \frac{1}{1 - f (y) x} = \sum_{k =
     0}^{\infty} f (y)^k x^k
  \end{equation*}
  satisfies the Gauss congruences for all primes. Equivalently, the
  coefficients $c_{k, \ell}$ of $x^k y^{\ell}$, that is
  \begin{equation*}
    c_{k, \ell} = \sum_{\substack{
       j_1, \ldots, j_k \geq 0\\
       j_1 + \ldots + j_k = \ell
     }} a_{j_1} \cdots a_{j_{\ell}},
  \end{equation*}
  satisfy the congruences $c_{k p^r, \ell p^r} \equiv c_{k p^{r - 1}, \ell
  p^{r - 1}} \pmod{p^r}$ for all primes $p$ and all $r
  \geq 1$. Establishing these congruences directly is less
  straightforward.
\end{example}

\begin{example}
  \label{eg:lucas:xy}Take $Q (x, y) = 1 - x - x^2 - y$. Since $Q$ is linear in
  $y$, it follows from Proposition~\ref{prop:Qdet} with
  $\boldsymbol{k}=\boldsymbol{0}$ that, for any $f \in \mathbb{Z}_p [[x]]$ with
  $f (0) = 1$, the product
  \begin{equation*}
    \frac{1 - x - x^2}{1 - x - x^2 - y}  \frac{\theta_x f}{f}
  \end{equation*}
  satisfies the Gauss congruences for $p$. In fact, since $(\theta_x f) / f$
  is logarithmic in $f$ and since $(\theta_x x^k) / x^k = k$, the same is
  obviously true for any $f \in \mathbb{Z}_p [[x]]$. For instance, choosing
  $f = x^2 / (1 - x - x^2)$, we find that
  \begin{equation*}
    \frac{2 - x}{1 - x - x^2 - y}
  \end{equation*}
  satisfies the Gauss congruences for all primes. In this particular case, the
  same conclusion follows from Theorem~\ref{thm:mostlylinear}.
\end{example}

\begin{theorem}
  \label{thm:mostlylinear}Let $P, Q \in \mathbb{Z} [z, \boldsymbol{x}]$ such
  that $Q$ is linear in the variables $\boldsymbol{x}$. Write $P =
  \sum_{\boldsymbol{k}} p_{\boldsymbol{k}} (z) \boldsymbol{x}^{\boldsymbol{k}}$ and $Q
  = \sum_{\boldsymbol{k}} q_{\boldsymbol{k}} (z) \boldsymbol{x}^{\boldsymbol{k}}$ with
  $p_{\boldsymbol{k}}, q_{\boldsymbol{k}} \in \mathbb{Z} [z]$. Then $P / Q$ has
  the Gauss property if and only if $p_{\boldsymbol{k}} \neq 0$ implies
  $q_{\boldsymbol{k}} \neq 0$ and $p_{\boldsymbol{k}} / q_{\boldsymbol{k}}$ has the
  Gauss property for all $\boldsymbol{k}$ with $q_{\boldsymbol{k}} \neq 0$.
\end{theorem}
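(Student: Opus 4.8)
The plan is to prove the two implications separately. Necessity rests on Proposition~\ref{prop:NPNQ} and Proposition~\ref{prop:PQF}, while sufficiency combines Minton's theorem with Proposition~\ref{prop:Qdet}. Throughout I write the $\boldsymbol{x}$-exponents as $\boldsymbol{k}$; since $Q$ is linear in each $x_i$, these lie in $\{0,1\}^n$, and $q_{\boldsymbol{k}} = 0$ for $\boldsymbol{k} \notin \{0,1\}^n$.

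For necessity, assume $P/Q$ has the Gauss property. First, Proposition~\ref{prop:NPNQ} gives $N(P) \subseteq N(Q)$. Each $\boldsymbol{k} \in \{0,1\}^n$ is an extreme point of $[0,1]^n$, so it cannot be a convex combination of the other exponents of $Q$ (which all lie in $\{0,1\}^n$); projecting $N(P) \subseteq N(Q)$ to the $\boldsymbol{x}$-coordinates therefore forces every $\boldsymbol{x}$-exponent of $P$ to occur in $Q$, that is, $p_{\boldsymbol{k}} \neq 0$ implies $q_{\boldsymbol{k}} \neq 0$ (and $p_{\boldsymbol{k}} = 0$ for $\boldsymbol{k} \notin \{0,1\}^n$). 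Next, fix $\boldsymbol{k}$ with $q_{\boldsymbol{k}} \neq 0$ and let $h$ be a linear form depending only on the $\boldsymbol{x}$-coordinates that attains its minimum over $\{0,1\}^n$ uniquely at $\boldsymbol{k}$. The corresponding face $F$ of $N(Q)$ is exactly the fiber over $\boldsymbol{k}$, so $Q_F = q_{\boldsymbol{k}}(z)\boldsymbol{x}^{\boldsymbol{k}}$, and, using $N(P) \subseteq N(Q)$ once more, $P_F = p_{\boldsymbol{k}}(z)\boldsymbol{x}^{\boldsymbol{k}}$. By Proposition~\ref{prop:PQF}, $P_F/Q_F$ satisfies the Gauss congruences whenever $P/Q$ does, hence $P_F/Q_F$ has the Gauss property; as $P_F/Q_F = p_{\boldsymbol{k}}(z)/q_{\boldsymbol{k}}(z)$ is independent of $\boldsymbol{x}$, this is precisely the Gauss property of the univariate function $p_{\boldsymbol{k}}/q_{\boldsymbol{k}}$.

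For sufficiency, I decompose $P/Q = \sum_{\boldsymbol{k}} p_{\boldsymbol{k}}(z)\boldsymbol{x}^{\boldsymbol{k}}/Q$, a finite sum of Laurent series expanded about one common vertex of $N(Q)$; as the Gauss congruences are preserved under addition, it suffices to handle each summand, and those with $p_{\boldsymbol{k}} = 0$ vanish. For the remaining $\boldsymbol{k}$ we have $q_{\boldsymbol{k}} \neq 0$, and Minton's theorem lets me write $p_{\boldsymbol{k}}/q_{\boldsymbol{k}} = \sum_{\ell} c_{\ell}\, \theta_z u_{\ell}/u_{\ell}$ with $c_{\ell} \in \mathbb{Q}$ and $u_{\ell} \in \mathbb{Z}[z]$. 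Consequently
\[
  \frac{p_{\boldsymbol{k}}(z)\boldsymbol{x}^{\boldsymbol{k}}}{Q} = \sum_{\ell} c_{\ell}\, \frac{q_{\boldsymbol{k}}(z)\boldsymbol{x}^{\boldsymbol{k}}}{Q}\, \frac{\theta_z u_{\ell}}{u_{\ell}},
\]
and each summand on the right is exactly the $r = 1$ case of Proposition~\ref{prop:Qdet} with $\boldsymbol{y} = z$ and $f_1 = u_{\ell}$. Thus every summand satisfies the Gauss congruences, and summing with $p$-integral coefficients (valid for all but finitely many $p$) shows that $P/Q$ does as well. This is the crux: although both $q_{\boldsymbol{k}}\boldsymbol{x}^{\boldsymbol{k}}/Q$ and $\theta_z u_{\ell}/u_{\ell}$ satisfy the Gauss congruences individually, the conclusion about their product is genuinely supplied by Proposition~\ref{prop:Qdet} and would not follow from the factorwise statements alone.

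The main obstacle is the normalization required to apply Proposition~\ref{prop:Qdet}, whose hypotheses demand that $Q$ be a power series with constant term $1$, expanded at the origin, and linear in $\boldsymbol{x}$. Using Proposition~\ref{prop:PQv} I may choose the expansion vertex freely; a toroidal substitution in the $\boldsymbol{x}$-variables (Proposition~\ref{prop:toroidal}) moves a vertex of the $\boldsymbol{x}$-support of $Q$ to $\boldsymbol{0}$ while preserving both the linearity in $\boldsymbol{x}$ and the hypotheses on the $p_{\boldsymbol{k}}/q_{\boldsymbol{k}}$, a monomial shift in $z$ brings the chosen vertex to the origin, and division by the vertex coefficient (a $p$-adic unit for all but finitely many $p$) normalizes the constant term. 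A final wrinkle is that $u_{\ell}$ need not have constant term $1$; writing $u_{\ell} = c z^{a} w$ with $w(0) = 1$ and invoking $\theta_z u_{\ell}/u_{\ell} = a + \theta_z w/w$ splits off a constant multiple of $q_{\boldsymbol{k}}\boldsymbol{x}^{\boldsymbol{k}}/Q$ (the $r = 0$ case) and leaves $w$ in the required form for every $p \nmid c$. Assembling these reductions and collecting the finitely many excluded primes completes the argument.
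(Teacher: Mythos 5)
Your proof is correct and follows essentially the same route as the paper: necessity via Proposition~\ref{prop:NPNQ} and the face argument of Proposition~\ref{prop:PQF}, and sufficiency by writing $p_{\boldsymbol{k}}/q_{\boldsymbol{k}}$ as a combination of logarithmic derivatives via Minton's theorem and invoking Proposition~\ref{prop:Qdet} termwise. The only difference is that you spell out the normalizations (vertex choice, toroidal flip, constant terms) needed to meet the hypotheses of Proposition~\ref{prop:Qdet}, which the paper leaves implicit; your handling of these is sound.
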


\begin{proof}
  Suppose $P / Q$ has the Gauss property. Let $\boldsymbol{k}$ be such that
  $p_{\boldsymbol{k}} \neq 0$. Suppose $q_{\boldsymbol{k}} = 0$. Since the points
  $\boldsymbol{k}$ are vertices of the hypercube $[0, 1]^n$, this means that the
  support of $p_{\boldsymbol{k}} \boldsymbol{x}^{\boldsymbol{k}}$ is outside $N
  (Q)$, contradicting Proposition~\ref{prop:NPNQ}. Hence, $q_{\boldsymbol{k}}
  \neq 0$. Further, notice that $p_{\boldsymbol{k}} / q_{\boldsymbol{k}}$ is $P_F
  / Q_F$, where $F$ is the face of $N (Q)$ corresponding to $\boldsymbol{k}$.
  According to Proposition~\ref{prop:PQF}, $p_{\boldsymbol{k}} /
  q_{\boldsymbol{k}}$ has the Gauss property.
  
  Now, fix $\boldsymbol{k}$ and suppose that $q_{\boldsymbol{k}} \neq 0$ and that
  $p_{\boldsymbol{k}} / q_{\boldsymbol{k}}$ has the Gauss property. We shall prove
  that $p_{\boldsymbol{k}} (z) \boldsymbol{x}^{\boldsymbol{k}} / Q$ has the Gauss
  property. The general theorem then follows by summing over all such
  $\boldsymbol{k}$. First, observe that Theorem~\ref{thm:minton} tells us that
  there are polynomials $u_j \in \mathbb{Z} [z]$ such that $p_{\boldsymbol{k}}
  / q_{\boldsymbol{k}} = \sum_j c_j z u_j' / u_j$ for some $c_j \in
  \mathbb{Q}$.
  
  By Proposition~\ref{prop:Qdet},
  \begin{equation*}
    \sum_j \frac{q_{\boldsymbol{k}} (z) \boldsymbol{x}^{\boldsymbol{k}}}{Q} 
     \frac{z u_j'}{u_j} = \frac{q_{\boldsymbol{k}} (z)
     \boldsymbol{x}^{\boldsymbol{k}}}{Q}  \frac{p_{\boldsymbol{k}}
     (z)}{q_{\boldsymbol{k}} (z)} = \frac{p_{\boldsymbol{k}} (z)
     \boldsymbol{x}^{\boldsymbol{k}}}{Q}
  \end{equation*}
  has the Gauss property.
\end{proof}

\begin{example}
  Monthly problem \texttt{\#11757} \cite{monthly-11757}, proposed by
  Gessel, concerns the rational function
  \begin{equation*}
    F (x, y) = \frac{1}{(1 - 3 x) (1 - y - 3 x + 3 x^2)} = \sum_{m, n =
     0}^{\infty} c_{m, n} x^m y^n .
  \end{equation*}
  The problem asks the reader to show that the diagonal Taylor coefficients
  $c_{n, n}$ equal $9^n$. We will not spoil the fun of that challenge but only
  note that, as a consequence, the sequence of diagonal coefficients satisfies
  Gauss congruences for all primes. On the other hand, it is an immediate
  consequence of Theorem~\ref{thm:mostlylinear} and Theorem~\ref{thm:minton},
  applied to
  \begin{equation*}
    \frac{1}{(1 - 3 x) (1 - 3 x + 3 x^2)} = \frac{\theta_x u}{u}, \quad u =
     \frac{x (1 - 3 x + 3 x^2)}{(1 - 3 x)^3},
  \end{equation*}
  that $F (x, y)$ has the Gauss property for all primes. In other words, all
  Taylor coefficients $c_{m, n}$ satisfy Gauss congruences for all primes.
\end{example}

A useful and immediate consequence of Theorem~\ref{thm:mostlylinear} is the
following characterization of rational functions, whose denominator is linear
in each variable (that is, the denominator $Q \in \mathbb{Z} [\boldsymbol{x}]$
has support $\operatorname{supp} (Q) \subseteq \{ 0, 1 \}^n$).

\begin{theorem}
  \label{thm:linear}Let $P, Q \in \mathbb{Z} [\boldsymbol{x}]$ and suppose that
  $Q$ is linear in each variable. Then $P / Q$ has the Gauss property if and
  only if $N (P) \subseteq N (Q)$.
\end{theorem}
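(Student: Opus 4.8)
The plan is to derive Theorem~\ref{thm:linear} as a direct specialization of Theorem~\ref{thm:mostlylinear}. The key observation is that Theorem~\ref{thm:linear} is precisely the case of Theorem~\ref{thm:mostlylinear} in which there are no extra variables, i.e.\ the polynomial ring $\mathbb{Z}[z]$ is replaced by the constant ring $\mathbb{Z}$ (equivalently, $z$ does not actually appear). So first I would set the stage: write $P = \sum_{\boldsymbol{k}} p_{\boldsymbol{k}} \boldsymbol{x}^{\boldsymbol{k}}$ and $Q = \sum_{\boldsymbol{k}} q_{\boldsymbol{k}} \boldsymbol{x}^{\boldsymbol{k}}$ with $p_{\boldsymbol{k}}, q_{\boldsymbol{k}} \in \mathbb{Z}$, where, because $Q$ is linear in each variable, the exponent vectors $\boldsymbol{k}$ range over the hypercube $\{0,1\}^n$ and the vertices of $N(Q)$ coincide with $\operatorname{supp}(Q)$ (as noted just before Theorem~\ref{thm:linear:intro} in the introduction).

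Next I would invoke Theorem~\ref{thm:mostlylinear} with the constant ring, so that each $p_{\boldsymbol{k}}, q_{\boldsymbol{k}}$ is simply an integer. The content of that theorem then becomes: $P/Q$ has the Gauss property if and only if, for each $\boldsymbol{k}$, the implication $p_{\boldsymbol{k}} \neq 0 \Rightarrow q_{\boldsymbol{k}} \neq 0$ holds and each ratio $p_{\boldsymbol{k}}/q_{\boldsymbol{k}}$ (with $q_{\boldsymbol{k}} \neq 0$) has the Gauss property. The crucial simplification is that $p_{\boldsymbol{k}}/q_{\boldsymbol{k}}$ is now a nonzero \emph{rational constant}, and a constant in $\mathbb{Q}$ trivially has the Gauss property: its Laurent series is a single constant term $c$ supported at $\boldsymbol{0}$, for which the congruence $f_{\boldsymbol{m}p^r} \equiv f_{\boldsymbol{m}p^{r-1}} \pmod{p^r}$ reads $c \equiv c$ when $\boldsymbol{m} = \boldsymbol{0}$ and $0 \equiv 0$ otherwise, for all but finitely many $p$ (those for which $c \in \mathbb{Z}_p$). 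Hence the second condition in Theorem~\ref{thm:mostlylinear} is automatically satisfied, and the criterion collapses to the single requirement that $p_{\boldsymbol{k}} \neq 0$ implies $q_{\boldsymbol{k}} \neq 0$ for every $\boldsymbol{k}$.

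Finally I would translate this surviving condition into the geometric statement $N(P) \subseteq N(Q)$. The condition ``$p_{\boldsymbol{k}} \neq 0 \Rightarrow q_{\boldsymbol{k}} \neq 0$'' says exactly that $\operatorname{supp}(P) \subseteq \operatorname{supp}(Q)$. Since $\operatorname{supp}(Q)$ equals the vertex set of $N(Q)$ and $N(Q)$ is their convex hull, this containment of supports is equivalent to $N(P) = \operatorname{conv}(\operatorname{supp}(P)) \subseteq \operatorname{conv}(\operatorname{supp}(Q)) = N(Q)$. One direction is immediate (support containment gives polytope containment by taking convex hulls); for the converse, I would use that every element of $\operatorname{supp}(P)$ is a lattice point, and a lattice point of $N(P) \subseteq N(Q) \subseteq \{0,1\}^n$ must itself lie in $\{0,1\}^n = \operatorname{vertices}$, hence in $\operatorname{supp}(Q)$.

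The main obstacle, and the only point requiring genuine care, is the last equivalence between $\operatorname{supp}(P) \subseteq \operatorname{supp}(Q)$ and $N(P) \subseteq N(Q)$: it relies essentially on the hypothesis $\operatorname{supp}(Q) \subseteq \{0,1\}^n$, which forces every lattice point of $N(Q)$ to be a vertex and hence in $\operatorname{supp}(Q)$. Without linearity in each variable, $N(Q)$ could contain interior lattice points, and then $N(P) \subseteq N(Q)$ would no longer force $\operatorname{supp}(P) \subseteq \operatorname{supp}(Q)$; so I would make sure to state clearly that it is precisely the $\{0,1\}^n$-support of $Q$ that makes the geometric and combinatorial conditions coincide. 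Everything else is a direct, essentially formal, reduction to the already-established Theorem~\ref{thm:mostlylinear}.
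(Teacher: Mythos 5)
Your proposal is correct and is essentially the paper's own argument: the paper presents Theorem~\ref{thm:linear} as an immediate consequence of Theorem~\ref{thm:mostlylinear}, obtained exactly by the specialization you describe (no variable $z$, so the $p_{\boldsymbol{k}}/q_{\boldsymbol{k}}$ are rational constants with the trivial Gauss property, and the support condition translates to $N(P)\subseteq N(Q)$ because every lattice point of $N(Q)\subseteq[0,1]^n$ is an extreme point and hence lies in $\operatorname{supp}(Q)$). The only nitpick is the harmless slip of writing $N(Q)\subseteq\{0,1\}^n$ where you mean $N(Q)\subseteq[0,1]^n$.
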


\begin{example}
  The {\emph{Delannoy numbers}}
  \begin{equation*}
    D_{n_1, n_2} = \sum_{k = 0}^{\min (n_1, n_2)} \binom{n_1}{k} \binom{n_1 +
     n_2 - k}{n_1}
  \end{equation*}
  are the Laurent series coefficients of the rational function
  \begin{equation*}
    \frac{1}{1 - x - y - x y} = \sum_{k_1, k_2 = 0}^{\infty} D_{k_1, k_2}
     x^{k_1} y^{k_2}
  \end{equation*}
  with respect to the vertex $(0, 0)$. By Theorem~\ref{thm:linear}, each of
  the rational functions
  \begin{equation*}
    \frac{1}{1 - x - y - x y}, \quad \frac{x}{1 - x - y - x y}, \quad
     \frac{y}{1 - x - y - x y}, \quad \frac{x y}{1 - x - y - x y}
  \end{equation*}
  has the Gauss property. In fact, they satisfy the Gauss congruences for all
  primes. Consequently, for any prime $p$ and $\boldsymbol{\delta} \in \{ 0, 1
  \}^2$, the Delannoy numbers $D_{\boldsymbol{n}}$ satisfy the (shifted)
  congruences
  \begin{equation*}
    D_{\boldsymbol{m}p^r -\boldsymbol{\delta}} \equiv D_{\boldsymbol{m}p^{r - 1}
     -\boldsymbol{\delta}} \quad (\operatorname{mod} p^r)
  \end{equation*}
  for all $\boldsymbol{m} \in \mathbb{Z}_{> 0}^2$ and all $r \geq 1$.
\end{example}

\section{Toroidal substitutions}

A substitution of the form $x_i =\boldsymbol{y}^{\boldsymbol{a}_i}$, $i = 1,
\ldots, n$, with $\boldsymbol{a}_1, \ldots, \boldsymbol{a}_n \in \mathbb{Q}^m$
linearly independent, is called a {\emph{toroidal substitution}}. Let $A$ be
the $m \times n$ matrix with columns $\boldsymbol{a}_1, \ldots, \boldsymbol{a}_n$.
Note that, for any $\boldsymbol{k} \in \mathbb{Z}^n$,
$\boldsymbol{x}^{\boldsymbol{k}} =\boldsymbol{y}^{A\boldsymbol{k}}$. We therefore
write the toroidal substitution simply as $\boldsymbol{x}=\boldsymbol{y}^A$. The
next result shows that toroidal substitutions preserve the Gauss property.

\begin{proposition}
  \label{prop:toroidal}Let $f = \sum_{\boldsymbol{k} \in \mathbb{Z}^n}
  f_{\boldsymbol{k}} \boldsymbol{x}^{\boldsymbol{k}} \in \mathbb{Z}_p
  [[\boldsymbol{x}^{\pm 1}]]$. Let $A$ be an $m \times n$ matrix with linearly
  independent columns $\boldsymbol{a}_1, \ldots, \boldsymbol{a}_n \in
  \mathbb{Q}^m$, such that $A\boldsymbol{k} \in \mathbb{Z}^m$ for all
  $\boldsymbol{k} \in \operatorname{supp} (f)$. Define the Laurent series $g \in
  \mathbb{Z}_p [[\boldsymbol{y}^{\pm 1}]]$ by
  \begin{equation*}
    g (y_1, \ldots, y_m) = f (\boldsymbol{y}^{\boldsymbol{a}_1}, \ldots,
     \boldsymbol{y}^{\boldsymbol{a}_n}) = \sum_{\boldsymbol{k} \in \mathbb{Z}^n}
     f_{\boldsymbol{k}} \boldsymbol{y}^{A\boldsymbol{k}} .
  \end{equation*}
  Suppose that the prime $p$ is such that $A \in \mathbb{Z}_p^{m \times n}$
  and $A \pmod{p}$ has rank $n$. Then, $g$ satisfies the
  Gauss congruences for $p$ if and only if $f$ does.
\end{proposition}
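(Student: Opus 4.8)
The plan is to show that the toroidal substitution intertwines the operator $U_p$ on Laurent series in $\boldsymbol{x}$ with the operator $U_p$ on Laurent series in $\boldsymbol{y}$, and then to observe that the substitution neither creates nor destroys divisibility by powers of $p$. Write $T$ for the $\mathbb{Z}_p$-linear map sending a Laurent series $\sum_{\boldsymbol{k}} c_{\boldsymbol{k}} \boldsymbol{x}^{\boldsymbol{k}}$ to $\sum_{\boldsymbol{k}} c_{\boldsymbol{k}} \boldsymbol{y}^{A\boldsymbol{k}}$, so that $g = T(f)$. Since the columns of $A$ are linearly independent, the map $\boldsymbol{k} \mapsto A\boldsymbol{k}$ is injective on $\mathbb{Z}^n$; hence $T$ is well-defined, its output has coefficient $f_{\boldsymbol{k}}$ at the exponent $A\boldsymbol{k}$ and $0$ at every exponent outside $A(\mathbb{Z}^n)$, and no two coefficients of the input are ever combined. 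In particular, for any Laurent series $\phi$ and any $r \geq 1$, one has $T(\phi) \equiv 0 \pmod{p^r}$ if and only if $\phi \equiv 0 \pmod{p^r}$, because $T$ merely relabels the coefficients injectively.

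The heart of the matter is the commutation relation $U_p(T\phi) = T(U_p \phi)$. This is where the hypotheses on $p$ enter, through the following elementary observation: if $\boldsymbol{k} \in \mathbb{Z}^n$ and $A\boldsymbol{k} \in p\mathbb{Z}^m$, then $\boldsymbol{k} \in p\mathbb{Z}^n$. Indeed, since $A \in \mathbb{Z}_p^{m \times n}$, reduction modulo $p$ gives $(A \bmod p)(\boldsymbol{k} \bmod p) = 0$ in $\mathbb{F}_p^m$, and because $A \bmod p$ has rank $n$ it is injective on $\mathbb{F}_p^n$, forcing $\boldsymbol{k} \equiv \boldsymbol{0} \pmod{p}$. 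To prove the commutation, I would fix $\boldsymbol{l} \in \mathbb{Z}^m$ and compare the coefficient of $\boldsymbol{y}^{\boldsymbol{l}}$ on both sides. On the left this coefficient is $(T\phi)_{p\boldsymbol{l}}$, which equals $\phi_{\boldsymbol{k}}$ when $p\boldsymbol{l} = A\boldsymbol{k}$ for some (necessarily unique) $\boldsymbol{k} \in \mathbb{Z}^n$, and $0$ otherwise. In the first case the observation gives $\boldsymbol{k} = p\boldsymbol{k}'$ with $A\boldsymbol{k}' = \boldsymbol{l}$, so the coefficient equals $\phi_{p\boldsymbol{k}'} = (U_p\phi)_{\boldsymbol{k}'} = (T(U_p\phi))_{\boldsymbol{l}}$; in the second case the same observation shows $\boldsymbol{l} \notin A(\mathbb{Z}^n)$, so $(T(U_p\phi))_{\boldsymbol{l}} = 0$ as well. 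This matches the right-hand side in both cases.

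Granting these two facts, the proposition follows at once. Iterating the commutation relation yields $U_p^r(g) = T(U_p^r(f))$ for every $r \geq 0$, and since $T$ is linear,
\[
  U_p^r(g) - U_p^{r-1}(g) = T\bigl(U_p^r(f) - U_p^{r-1}(f)\bigr).
\]
By the relabeling remark, the left-hand side is divisible by $p^r$ if and only if $U_p^r(f) - U_p^{r-1}(f)$ is. Recalling that a Laurent series in $\mathbb{Z}_p[[\boldsymbol{x}^{\pm 1}]]$ satisfies the Gauss congruences for $p$ exactly when $U_p^r \equiv U_p^{r-1} \pmod{p^r}$ for all $r \geq 1$ (which is immediate from the definition of $U_p$), we conclude that $g$ satisfies the Gauss congruences for $p$ if and only if $f$ does.

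I expect the main obstacle to be the bookkeeping in the commutation relation $U_p \circ T = T \circ U_p$ — specifically, verifying the ``otherwise'' case, namely that $p\boldsymbol{l} \notin A(\mathbb{Z}^n)$ whenever $\boldsymbol{l} \notin A(\mathbb{Z}^n)$, which is exactly the point at which the rank-$n$-modulo-$p$ hypothesis is indispensable. One should also take modest care that $A \in \mathbb{Z}_p^{m\times n}$ (rather than $\mathbb{Z}^{m\times n}$) is enough for the reductions modulo $p$ to be meaningful, which it is, since $\mathbb{Z} \subseteq \mathbb{Z}_p$ and the reduction map $\mathbb{Z}_p \to \mathbb{F}_p$ is a ring homomorphism.
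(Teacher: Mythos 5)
Your proof is correct and rests on the same two ingredients as the paper's: the injective relabeling of coefficients under $\boldsymbol{k}\mapsto A\boldsymbol{k}$, and the key observation that $A\boldsymbol{k}\equiv\boldsymbol{0}\pmod{p}$ forces $\boldsymbol{k}\equiv\boldsymbol{0}\pmod{p}$ (the paper states this as $\nu_p(A\boldsymbol{k})=\nu_p(\boldsymbol{k})$). Packaging the argument as the commutation relation $U_p\circ T=T\circ U_p$ rather than checking both directions of the congruence coefficientwise is a tidier presentation of the same proof, not a different one.
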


\begin{proof}
  Since $\operatorname{rank} (A) = n$, the map $\mathbb{Z}^n \rightarrow
  \mathbb{Q}^m$, $\boldsymbol{k} \mapsto A\boldsymbol{k}$, is injective. Let us
  write $I$ for the image of this map restricted to $\mathbb{Z}^m$. That is,
  $I$ is the set of $\boldsymbol{m} \in \mathbb{Z}^m$ such that $\boldsymbol{m}=
  A\boldsymbol{k}$ for some $\boldsymbol{k} \in \mathbb{Z}^n$. In the sequel, we
  write $A^{- 1} \boldsymbol{m}=\boldsymbol{k}$ for that unique vector
  $\boldsymbol{k}$.
  
  We claim that $\nu_p (A\boldsymbol{k}) = \nu_p (\boldsymbol{k})$ for all
  $\boldsymbol{k} \in \mathbb{Z}^n$. Since $A \in \mathbb{Z}_p^{m \times n}$,
  we obviously have $\nu_p (A\boldsymbol{k}) \geq \nu_p (\boldsymbol{k})$. On
  the other hand, $A \pmod{p}$ has rank $n$, that is,
  $A\boldsymbol{k} \equiv \boldsymbol{0} \pmod{p}$ implies
  $\boldsymbol{k} \equiv \boldsymbol{0} \pmod{p}$. It follows
  inductively (or from the fact that $A \pmod{p^r}$ has
  rank $n$) that $A\boldsymbol{k} \equiv \boldsymbol{0} \pmod{p^r}$ implies $\boldsymbol{k} \equiv \boldsymbol{0} \pmod{p^r}$. Hence, $\nu_p (A\boldsymbol{k}) \leq \nu_p (\boldsymbol{k})$.
  Observe that, as a consequence, $\boldsymbol{m}p^r \in I$ if and only if
  $\boldsymbol{m} \in I$.
  
  By construction,
  \begin{equation*}
    g = \sum_{\boldsymbol{m} \in \mathbb{Z}^m} g_{\boldsymbol{m}}
     \boldsymbol{y}^{\boldsymbol{m}} = \sum_{\boldsymbol{k} \in \mathbb{Z}^n}
     f_{\boldsymbol{k}} \boldsymbol{y}^{A\boldsymbol{k}} = \sum_{\boldsymbol{m} \in I}
     f_{A^{- 1} \boldsymbol{m}} \boldsymbol{y}^{\boldsymbol{m}},
  \end{equation*}
  so that $g_{\boldsymbol{m}} = f_{A^{- 1} \boldsymbol{m}}$ if $\boldsymbol{m} \in
  I$ and $g_{\boldsymbol{m}} = 0$ otherwise. On the other hand,
  $f_{\boldsymbol{k}} = g_{A\boldsymbol{k}}$ for all $\boldsymbol{k}$ such that
  $A\boldsymbol{k} \in \mathbb{Z}^m$.
  
  Suppose that $g$ satisfies the Gauss congruences for $p$. Let $\boldsymbol{k}
  \in \mathbb{Z}^n$. If $A\boldsymbol{k} \in \mathbb{Z}^m$, then
  \begin{equation*}
    f_{\boldsymbol{k}p^r} \equiv g_{(A\boldsymbol{k}) p^r} \equiv
     g_{(A\boldsymbol{k}) p^{r - 1}} \equiv f_{\boldsymbol{k}p^{r - 1}}
     \pmod{p^r} .
  \end{equation*}
  If $A\boldsymbol{k} \not\in \mathbb{Z}^m$, then it follows from $A \in
  \mathbb{Z}_p^{m \times n}$ that $A\boldsymbol{k}p^r \not\in \mathbb{Z}^m$.
  Hence, $f_{\boldsymbol{k}p^r} = f_{\boldsymbol{k}p^{r - 1}} = 0$. Thus, $f$
  satisfies the Gauss congruences for $p$.
  
  Finally, suppose that $f$ satisfies the Gauss congruences for $p$. Let
  $\boldsymbol{m} \in \mathbb{Z}^m$. If $\boldsymbol{m} \in I$, then
  $\boldsymbol{m}= A\boldsymbol{k}$ for some $\boldsymbol{k} \in \mathbb{Z}^n$, and
  \begin{equation*}
    g_{\boldsymbol{m}p^r} = f_{\boldsymbol{k}p^r} \equiv f_{\boldsymbol{k}p^{r -
     1}} = g_{\boldsymbol{m}p^{r - 1}} \pmod{p^r} .
  \end{equation*}
  If $\boldsymbol{m} \not\in I$, then $\boldsymbol{m}p^r, \boldsymbol{m}p^{r - 1} \not\in
  I$, so that $g_{\boldsymbol{k}p^r} = g_{\boldsymbol{k}p^{r - 1}} = 0$.
  Consequently, $g$ satisfies the Gauss congruences for $p$.
\end{proof}

\begin{example}
  It follows from Proposition~\ref{prop:toroidal} that
  \begin{equation*}
    \frac{2 - x y}{1 - x y - x^2 y^2}
  \end{equation*}
  has the Gauss property if and only if $(2 - x) / (1 - x - x^2)$ does. The
  latter is the generating function for the Lucas numbers. That it has the
  Gauss property follows, for instance, from Theorem~\ref{thm:det:rat}.
\end{example}

\begin{example}
  As observed in Example~\ref{eg:lucas:xy}, the rational function $F (x, y) =
  (2 - x) / (1 - x - x^2 - y)$ has the Gauss property. However, $F (x, 1) = (x
  - 2) / (x + x^2)$ does not have the Gauss property because it violates the
  necessary condition of Proposition~\ref{prop:NPNQ}. This illustrates that
  the condition in Proposition~\ref{prop:toroidal} on the rank of $A$ cannot
  be dropped.
\end{example}

\begin{example}
  \label{eg:toroidal:rat}Consider $Q = 1 + y_1^3 y_2 y_3 + y_1 y_2 y_3^3 + 3
  y_1^2 y_2 y_3^2$. In that case, $N (Q)$ lies in a two-dimensional subspace
  of $\mathbb{R}^3$. Note that $(3, 1, 1)$ and $(1, 1, 3)$ are vertices of $N
  (Q)$, while $(2, 1, 2)$ is not. We can obtain $Q (\boldsymbol{y})$ from
  $\tilde{Q} (\boldsymbol{x}) = 1 + x_1^2 + x_2^2 + 3 x_1 x_2$ via the toroidal
  substitution
  \begin{equation*}
    x_1 = y_1^{3 / 2} y_2^{1 / 2} y_3^{1 / 2}, \quad x_2 = y_1^{1 / 2} y_2^{1
     / 2} y_3^{3 / 2} .
  \end{equation*}
  Let $p > 2$ be a primes. It follows from Proposition~\ref{prop:toroidal}
  that $1 / Q$ satisfies the Gauss congruences for $p$ if and only if $1 /
  \tilde{Q}$ satisfies the Gauss congruences for $p$.
\end{example}

\begin{remark}
  \label{rk:q:toroidal}Let $\boldsymbol{x}=\boldsymbol{y}^A$ be a toroidal
  substitution with invertible matrix $A \in \mathbb{Q}^{n \times n}$.
  Observe that, for any $f_1, \ldots, f_n$,
  \begin{equation*}
    \det \left(\frac{\theta_{y_i} f_j}{f_j} \right)_{i, j = 1, \ldots, n} =
     \det (A) \det \left(\frac{\theta_{x_i} f_j}{f_j} \right)_{i, j = 1,
     \ldots, n} .
  \end{equation*}
  This can be seen, for instance, by recalling that the left-hand side is the
  coefficient of $\md y_1 \wedge \cdots \wedge \md y_n / (y_1 \cdots
  y_n)$ in $\md f_1 \wedge \cdots \wedge \md f_n / (f_1 \cdots f_n)$,
  and realizing that $\md x_1 \wedge \cdots \wedge \md x_n / (x_1 \cdots
  x_n)$ and $\md y_1 \wedge \cdots \wedge \md y_n / (y_1 \cdots y_n)$
  only differ by a factor of $\det (A)$. We conclude that an invertible
  toroidal substitution does not affect the answer to Question~\ref{q:det}.
\end{remark}

\begin{example}
  \label{eg:q:degree2}Suppose that $Q \in \mathbb{Z} [x, y]$ has total degree
  $2$. We will show that, for any rational function $P / Q$ with $P \in
  \mathbb{Z} [x, y]$, the answer to Question~\ref{q:det} is affirmative. That
  is, $P / Q$ has the Gauss property if and only if $P / Q$ can be written as
  a $\mathbb{Q}$-linear combination of functions of the form
  \eqref{eq:det:intro}. By Theorem~\ref{thm:det:rat}, we only need to prove
  the ``only if'' part of that statement.
  
  First, observe that this is a consequence of our proof of
  Theorem~\ref{thm:mostlylinear} in the case that $Q$ is linear in at least
  one of the variables $x, y$. We may therefore assume that $(2, 0)$ and $(0,
  2)$ are vertices of $N (Q)$. Suppose that $(0, 0)$ is not a vertex of $N
  (Q)$. Then $Q = b x + c y + d x^2 + e x y + f y^2$, so that $Q / x^2 = b u +
  c u v + d + e v + f v^2$ in terms of the toroidal substitution $u = 1 / x$,
  $v = y / x$. Note that the latter is linear in $u$, so that, by
  Proposition~\ref{prop:toroidal}, we are reduced to a known case. We may
  therefore assume in the sequel that $N (Q)$ is the triangle with vertices
  $(0, 0), (2, 0), (0, 2)$. The number of lattice points in $N (Q)$ is $6$.
  
  Consider the vector space $V_Q$ of polynomials $P \in \mathbb{Z} [x, y]$
  such that $P / Q$ has the Gauss property. It follows from
  Proposition~\ref{prop:NPNQ} that $V_Q$ consists of polynomials of total
  degree at most $2$. In particular, $\dim V_Q \leq 6$. On the other
  hand, by Theorem~\ref{thm:det:rat}, $1$ as well as $x \frac{\partial
  Q}{\partial x} / Q$ and $y \frac{\partial Q}{\partial y} / Q$ have the Gauss
  property. Hence, $V_Q$ contains $Q$, $x \partial Q / \partial x$ and $y
  \partial Q / \partial y$, implying that $\dim V_Q \geq 3$. We will see
  below that $\dim V_Q$ can indeed take any value in $\{ 3, 4, 5, 6 \}$. Let
  $F$ be one of the three $1$-dimensional faces of $N (Q)$. By
  Proposition~\ref{prop:PQF}, if $P \in V_Q$, then $P_F / Q_F$ has the Gauss
  property.
  
  Note that, possibly after a toroidal substitution, $P_F / Q_F$ is a
  univariate rational function $p_F (x) / q_F (x)$ with $p_F, q_F \in
  \mathbb{Z} [x]$, $\deg q_F = 2$ and $q_F (0) \neq 0$. By the same arguments
  as above, the vector space $V_{q_F}$ of polynomials $p \in \mathbb{Z} [x]$,
  such that $p / q_F$ has the Gauss property, has dimension $2$ or $3$.
  Moreover, it follows from Theorem~\ref{thm:minton} that $\dim V_{q_F} = 3$
  if and only if $q_F (x)$ has two distinct rational roots.
  
  Let $F_x$ be the face with vertices $(0, 0)$ and $(2, 0)$. Likewise, let
  $F_y$ be the face with vertices $(0, 0)$, $(0, 2)$, and $F_{x y}$ the face
  with vertices $(2, 0), (0, 2)$.
  
  Suppose $q{}_{F_{x y}} (x)$ has two distinct rational roots. That is, $Q = a +
  b x + c y + d x^2 + e x y + f y^2$ and $d x^2 + e x y + f y^2 = d (x +
  \alpha y) (x + \beta y)$ for some $\alpha, \beta \in \mathbb{Q}$ with
  $\alpha \neq \beta$. Without loss, $d = 1$. Observe that $Q = \varepsilon +
  (x + \alpha y + \gamma) (x + \beta y + \delta)$, where $\gamma = (\alpha b -
  c) / (\alpha - \beta)$, $\delta = (c - \beta b) / (\alpha - \beta)$ and
  $\varepsilon = a - \gamma \delta$ are all rational.
  Theorem~\ref{thm:det:rat}, applied to $f_1 = x + \alpha y + \gamma$ and $f_2
  = Q$, shows that
  \begin{equation*}
    \frac{\theta_x f_1}{f_1}  \frac{\theta_y f_2}{f_2} - \frac{\theta_y
     f_1}{f_1}  \frac{\theta_x f_2}{f_2} = \frac{(\beta - \alpha) x y}{Q}
  \end{equation*}
  has the Gauss property. In particular, $x y / Q$ has the Gauss property. By
  using a toroidal substitution to translate to this case and applying
  Proposition~\ref{prop:toroidal}, we conclude that, for $m \in \{ x, y, x y
  \}$, if $q{}_{F_m} (x)$ has two distinct rational roots, then $m / Q$ has the
  Gauss property and, by Remark~\ref{rk:q:toroidal}, is a linear combination
  of functions of the form \eqref{eq:det:intro}.
  
  Let $M \subseteq \{ x, y, x y \}$ consist of those $m$ such that $q{}_{F_m}
  (x)$ has two distinct rational roots. For each $m \in \{ x, y, x y \}$ with
  $m \not\in M$, we obtain a linear constraint for $V_Q$ coming from the
  condition that $P{}_{F_m} / Q{}_{F_m}$ has the Gauss property. These $3 - | M |$
  constraints are linearly independent, so that $\dim V_Q \leq 6 - (3 - |
  M |) = 3 + | M |$. On the other hand, $V_Q$ contains $Q$, $x \partial Q /
  \partial x$, $y \partial Q / \partial y$ as well as $m$ for $m \in M$. Since
  these are linearly independent, we conclude that $\dim V_Q = 3 + | M |$ and
  that all $P \in V_Q$ are a linear combination of functions of the form
  \eqref{eq:det:intro}.
\end{example}

\section{Univariate substitutions}\label{sec:subst}

In order to prove Theorem~\ref{thm:subst:rat:intro}, we begin with the
following corresponding result for Laurent series. The statement is
necessarily more technical because conditions are needed to ensure that the
composition of series is well-defined.

\begin{theorem}
  \label{thm:subst}Let $\alpha$ be a linear form on $\mathbb{R}^{n + 1}$ such
  that $\alpha (1, 0, 0, \ldots, 0) > 0$. Let $f (z, \boldsymbol{x}) \in
  \mathbb{Z}_p [[z^{\pm 1}, \boldsymbol{x}^{\pm 1}]]$ such that $\alpha
  (\boldsymbol{w}) > 0$ for all $\boldsymbol{w} \in \operatorname{supp} (f)$. Let $g (z)
  \in z^r \mathbb{Z}_p [[z]]^{\times}$, for $r \in \mathbb{Z}_{> 0}$.
  Suppose $f$ satisfies the Gauss congruences for the prime $p$. Then so does
  \begin{equation*}
    F (z, \boldsymbol{x}) = \frac{z g' (z)}{g (z)} f (g (z), \boldsymbol{x}) .
  \end{equation*}
\end{theorem}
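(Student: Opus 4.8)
The plan is to track the operator $U_p$ through the substitution. Write $f=\sum_{k_0,\boldsymbol{k}}f_{k_0,\boldsymbol{k}}z^{k_0}\boldsymbol{x}^{\boldsymbol{k}}$ and set $\Lambda_g(h)=\frac{\theta_z g}{g}\,h(g(z),\boldsymbol{x})$, so that $F=\Lambda_g(f)=\sum_{k_0,\boldsymbol{k}}f_{k_0,\boldsymbol{k}}\,\eta_{k_0}(z)\,\boldsymbol{x}^{\boldsymbol{k}}$ with $\eta_k:=\frac{\theta_z g}{g}g^k$. Since the factor $\frac{\theta_z g}{g}$ is free of $\boldsymbol{x}$, the $\boldsymbol{x}$-part of $U_p$ merely relabels exponents: extracting the coefficient of $\boldsymbol{x}^{\boldsymbol{m}}$ in $U_p^s F$ yields the $z$-series $\Sigma_s(\boldsymbol{m}):=\sum_{k_0}f_{k_0,p^s\boldsymbol{m}}\,(U_p^{(z)})^s\eta_{k_0}$. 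The Gauss congruences for $F$ are thus equivalent to the one-variable statement $\Sigma_s(\boldsymbol{m})\equiv\Sigma_{s-1}(\boldsymbol{m})\pmod{p^s}$ for every $\boldsymbol{m}$ and $s\geq1$, and it is this that I would establish. Well-definedness of every series in sight (in particular convergence of the $k_0$-sums and of $f(g(z),\boldsymbol{x})$) is guaranteed by the hypothesis that $\operatorname{supp}(f)$ lies in the half-space $\alpha>0$ with $\alpha(1,\boldsymbol{0})>0$, together with $g\in z^r\mathbb{Z}_p[[z]]^{\times}$.

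Two computations drive everything. First, coefficientwise one has $U_p^{(z)}\theta_z=p\,\theta_z U_p^{(z)}$, so for $k\neq0$, $(U_p^{(z)})^s\eta_k=\tfrac1k (U_p^{(z)})^s\theta_z(g^k)=\tfrac{p^s}{k}\theta_z\bigl((U_p^{(z)})^s(g^k)\bigr)$; in particular $(U_p^{(z)})^s\eta_k\equiv0\pmod{p^s}$ whenever $p\nmid k$. Second, $\mathbb{Z}_p[[z]]$ is a domain with $p$-Frobenius lift $\phi\colon\Phi(z)\mapsto\Phi(z^p)$, so Lemma~\ref{lem:powers:frob} applied to $g$ (and to $g^{-1}=z^{-r}\cdot\mathrm{unit}$) gives $g^{kp^s}\equiv g(z^p)^{kp^{s-1}}\pmod{p^s}$, whence $(U_p^{(z)})^s(g^{kp^s})\equiv (U_p^{(z)})^{s-1}(g^{kp^{s-1}})\pmod{p^s}$ using $U_p^{(z)}\circ\phi=\mathrm{id}$; at $k_0=0$ the series $\eta_0=\frac{\theta_z g}{g}$ is a logarithmic derivative and so already has the Gauss property by the $m=1$ case of Theorem~\ref{thm:det}.

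With these in hand I would split the $k_0$-sum in $\Sigma_s(\boldsymbol{m})$ according to whether $p\mid k_0$. The terms with $p\nmid k_0$ contribute $\equiv0\pmod{p^s}$ by the first computation. Writing $k_0=pk_0'$ in the remaining terms, comparing $(U_p^{(z)})^s\eta_{pk_0'}$ with $(U_p^{(z)})^{s-1}\eta_{k_0'}$ via the second computation (their difference is $p$ times a fixed $g$-dependent series $\tau_{k_0'}$), and invoking the Gauss congruence for $f$ in the form $f_{pk_0',p^s\boldsymbol{m}}\equiv f_{k_0',p^{s-1}\boldsymbol{m}}\pmod{p}$, one obtains $\Sigma_s(\boldsymbol{m})-\Sigma_{s-1}(\boldsymbol{m})\equiv p\bigl[\Sigma^{f_1}_{s-1}(\boldsymbol{m})+\textstyle\sum_{k_0'}f_{pk_0',p^s\boldsymbol{m}}(U_p^{(z)})^{s-1}\tau_{k_0'}\bigr]\pmod{p^s}$, where $f_1:=(U_pf-f)/p$ and $\Sigma^{f_1}$ is the analogue of $\Sigma$ built from $f_1$ (indeed $(f_1)_{k_0',p^{s-1}\boldsymbol{m}}=(f_{pk_0',p^s\boldsymbol{m}}-f_{k_0',p^{s-1}\boldsymbol{m}})/p$, which is exactly the weight appearing in the first sum). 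The claim therefore reduces to showing the bracket is $\equiv0\pmod{p^{s-1}}$, a statement one power of $p$ weaker and at one lower level, which is precisely the shape needed to run an induction on $s$.

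The hard part will be closing this induction, because the bracket couples two distinct defects: the Gauss defect of $f$, carried by $f_1$, and the Frobenius defect of $g$, carried by the $\tau_{k_0'}$. The series $f_1$ is not itself Gauss but only ``Gauss with a one-power deficit'' (from $f$ Gauss one checks $U_p^sf_1\equiv U_p^{s-1}f_1\pmod{p^{s-1}}$), so the induction must be run over a hierarchy of such deficits rather than over the Gauss property alone; simultaneously the $\tau$-term has to be absorbed, which is where the interplay between $U_p$ and the logarithmic derivative of $g$ is used (namely that $\frac{\theta_z g}{g}$ is Gauss and that $U_p^{(z)}$ carries it to the corresponding object for the norm $g_1=\prod_{\zeta^p=1}g(\zeta z^{1/p})$). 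I would also dispatch separately the harmless $k_0=0$ and constant-in-$z$ contributions and the negative-exponent bookkeeping. Working algebraically through Lemma~\ref{lem:powers:frob}, rather than with the explicit root-of-unity formula for $U_p$, keeps the argument uniform in $p$, including $p=2$.
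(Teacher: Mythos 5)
Your setup is sound and in fact mirrors the paper's: the identity $\frac{zg'}{g}g^{k}=\frac{1}{k}\theta_z(g^{k})$ for $k\neq 0$, the Frobenius lift $z\mapsto z^p$ on $\mathbb{Z}_p[[z]]$ fed into Lemma~\ref{lem:powers:frob}, the separate treatment of the $k_0=0$ part via the $m=1$ case of Theorem~\ref{thm:det}, and the splitting of the defect of $F$ into a ``Frobenius defect of $g$'' piece and a ``Gauss defect of $f$'' piece are all exactly the right ingredients. But the proof is not complete: at the central step you reduce to showing that a bracket involving $\Sigma^{f_1}_{s-1}$ is $\equiv 0\pmod{p^{s-1}}$, propose to handle it by ``an induction over a hierarchy of deficits,'' and then explicitly concede that closing this induction is the hard part. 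That concession is accurate. The inductive statement is never formulated, and it cannot be the naive one: $f_1=(U_pf-f)/p$ satisfies only $U_p^sf_1\equiv U_p^{s-1}f_1\pmod{p^{s-1}}$, so each pass loses a power of $p$ while also dropping a level, and it is not shown that these losses balance; moreover the target changes shape from ``$\Sigma_s-\Sigma_{s-1}\equiv 0$'' to ``$\Sigma^{f_1}_{s-1}+(\text{extra term})\equiv 0$,'' which is not an instance of the original claim. As written, the core of the theorem is unproved.

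The missing idea is that no induction on $s$ is needed: a direct valuation count finishes the Gauss-defect piece in one step. In the paper's formulation, one shows that the coefficient of $z^L\boldsymbol{x}^{\boldsymbol{k}}$ in $F(z,\boldsymbol{x})-F(z^p,\boldsymbol{x}^p)$ has valuation at least $\min(\nu_p(L),\nu_p(\boldsymbol{k}))$. For the term
\begin{equation*}
  \frac{f_{\ell,\boldsymbol{k}}-f_{\ell/p,\boldsymbol{k}/p}}{\ell}\,\theta_z\bigl(g(z)^{\ell}\bigr),
\end{equation*}
the coefficient of $z^L$ in $\theta_z(g^{\ell})$ is $L\cdot[z^L]g^{\ell}$ and also equals $\ell\cdot[z^L]\bigl(\tfrac{\theta_zg}{g}g^{\ell}\bigr)$, hence has valuation at least $\max(\nu_p(\ell),\nu_p(L))$; combined with the single-step Gauss congruence $\nu_p(f_{\ell,\boldsymbol{k}}-f_{\ell/p,\boldsymbol{k}/p})\geq\min(\nu_p(\ell),\nu_p(\boldsymbol{k}))$ and the elementary inequality $\min(a,c)-a+\max(a,b)\geq\min(b,c)$, this yields the required power of $p$ immediately. (The same estimate, applied to $[z^{p^{s-1}L}]\eta_{k_0'}$, would close your own bracket; and your $\tau$-term needs no ``absorption'' at all, since $\tau_{k_0'}$ lies in the image of $\theta_z$, so your commutation relation $U_p^{(z)}\theta_z=p\,\theta_zU_p^{(z)}$ already makes $(U_p^{(z)})^{s-1}\tau_{k_0'}\equiv 0\pmod{p^{s-1}}$.) Until this estimate, or an equivalent, is supplied, the argument has a genuine gap.
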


\begin{proof}
  By assumption,
  \begin{equation}
    f (z, \boldsymbol{x}) = \sum_{\alpha (\ell, \boldsymbol{k}) > 0} f_{\ell,
    \boldsymbol{k}} z^{\ell} \boldsymbol{x}^{\boldsymbol{k}}, \label{eq:subst:f}
  \end{equation}
  where the sum is over all $\ell \in \mathbb{Z}$, $\boldsymbol{k} \in
  \mathbb{Z}^n$ such that $\alpha (\ell, \boldsymbol{k}) > 0$. First, let us
  note that
  \begin{equation*}
    f (g (z), \boldsymbol{x}) = \sum_{\alpha (\ell, \boldsymbol{k}) > 0} f_{\ell,
     \boldsymbol{k}} g (z)^{\ell} \boldsymbol{x}^{\boldsymbol{k}}
  \end{equation*}
  is a well-defined Laurent series in $\mathbb{Z}_p [[z^{\pm 1},
  \boldsymbol{x}^{\pm 1}]]$, since contributions to the coefficient of $z^L
  \boldsymbol{x}^{\boldsymbol{k}}$ only come from the indices $(\ell,
  \boldsymbol{k})$ with $\ell \leq L$. By the assumption that $\alpha (1,
  \boldsymbol{0}) > 0$, there are only finitely many such $(\ell, \boldsymbol{k})$
  with $\alpha (\ell, \boldsymbol{k}) > 0$.
  
  Write $f (z, \boldsymbol{x}) = f_0 (\boldsymbol{x}) + z f_1 (z, \boldsymbol{x})$.
  It follows from the case $m = 1$ of Theorem~\ref{thm:det} that $\frac{z g'
  (z)}{g (z)} f_0 (\boldsymbol{x})$ satisfies the Gauss congruences. We may
  therefore replace $f (z, \boldsymbol{x})$ with $f (z, \boldsymbol{x}) - f_0
  (\boldsymbol{x})$. In other words, we may assume that the sum in
  \eqref{eq:subst:f} is over all $\ell \in \mathbb{Z}$, $\boldsymbol{k} \in
  \mathbb{Z}^n$ such that $\alpha (\ell, \boldsymbol{k}) > 0$ and $\ell \neq
  0$. All subsequent sums are assumed to be of this form. Observe that
  \begin{equation*}
    F (z, \boldsymbol{x}) = \frac{z g' (z)}{g (z)} f (g (z), \boldsymbol{x}) =
     \sum_{\ell \neq 0, \boldsymbol{k}} \frac{f_{\ell, \boldsymbol{k}}}{\ell} z
     \frac{\md}{\md z} [g (z)^{\ell}] \boldsymbol{x}^{\boldsymbol{k}} .
  \end{equation*}
  We rewrite this as $F = F_1 + F_2$ with
  \begin{equation*}
    F_1 (z, \boldsymbol{x}) = \sum_{\ell, \boldsymbol{k}} \frac{f_{\ell,
     \boldsymbol{k}} - f_{\ell / p, \boldsymbol{k}/ p}}{\ell} z
     \frac{\md}{\md z} [g (z)^{\ell}] \boldsymbol{x}^{\boldsymbol{k}}
  \end{equation*}
  and
  \begin{equation*}
    F_2 (z, \boldsymbol{x}) = \sum_{\ell, \boldsymbol{k}} \frac{f_{\ell / p,
     \boldsymbol{k}/ p}}{\ell} z \frac{\md}{\md z} [g (z)^{\ell}]
     \boldsymbol{x}^{\boldsymbol{k}},
  \end{equation*}
  where we use the convention that $f_{\ell / p, \boldsymbol{k}/ p} = 0$ if
  $\ell$ or $\boldsymbol{k}$ is not divisible by $p$. The second sum equals
  \begin{equation*}
    F_2 (z, \boldsymbol{x}) = \sum_{\ell, \boldsymbol{k}} f_{\ell / p,
     \boldsymbol{k}/ p} z \frac{\md}{\md z} \left[ \frac{g (z)^{\ell} - g
     (z^p)^{\ell / p}}{\ell} \right] \boldsymbol{x}^{\boldsymbol{k}} + F (z^p,
     \boldsymbol{x}^p) .
  \end{equation*}
  To prove our claim, we need to show that the coefficient of $z^L
  \boldsymbol{x}^{\boldsymbol{k}}$ in $F (z, \boldsymbol{x}) - F (z^p,
  \boldsymbol{x}^p)$ is divisible by $p^r$ with $r = \min (\nu_p (L), \nu_p
  (\boldsymbol{k}))$.
  
  Let $h (z) = (g (z)^{\ell} - g (z^p)^{\ell / p}) / \ell$ for $\ell \in
  p\mathbb{Z}$. Since $g (z) \in \mathbb{Z}_p [[z]]$, we have $g (z)^p
  \equiv g (z^p)$, which implies that $h$ has $p$-adically integer
  coefficients. Let $h_L \in \mathbb{Z}_p$ be the coefficient of $z^L$ in $h
  (z)$. Then, the coefficient of $z^L$ in $z \frac{\md}{\md z} h (z)$ is
  $L h_L$, which is divisible by $p^{\nu_p (L)}$.
  
  It therefore only remains to show that the coefficient of $z^L
  \boldsymbol{x}^{\boldsymbol{k}}$ in $F_1 (z, \boldsymbol{x})$ is divisible by
  $p^r$ with $r = \min (\nu_p (L), \nu_p (\boldsymbol{k}))$. Equivalently, if
  $C_L$ is the coefficient of $z^L$ in
  \begin{equation*}
    \frac{f_{\ell, \boldsymbol{k}} - f_{\ell / p, \boldsymbol{k}/ p}}{\ell} z
     \frac{\md}{\md z} [g (z)^{\ell}],
  \end{equation*}
  then we need to show that $\nu_p (C_L) \geq \min (\nu_p (L), \nu_p
  (\boldsymbol{k}))$. Observe that the $p$-adic valuation of the coefficient of
  $z^L$ in $z \frac{\md}{\md z} g (z)^{\ell}$ is at least $\max (\nu_p
  (\ell), \nu_p (L))$. Then, because $f$ satisfies the Gauss congruences for
  $p$,
  \begin{equation*}
    \nu_p (C_L) \geq \min (\nu_p (\ell), \nu_p (\boldsymbol{k})) - \nu_p
     (\ell) + \max (\nu_p (\ell), \nu_p (L)) \geq \min (\nu_p (L), \nu_p
     (\boldsymbol{k})),
  \end{equation*}
  which completes the proof.
\end{proof}

\begin{corollary}
  \label{cor:subst:rat}Let $g_j \in \mathbb{Q} (x)$ be nonzero. If the
  rational function $f \in \mathbb{Q} (\boldsymbol{x})$ has the Gauss property,
  then so does the rational function
  \begin{equation*}
    \left(\prod_{j = 1}^n \frac{x_j g'_j (x_j)}{g_j (x_j)} \right) f (g_1
     (x_1), \ldots, g_n (x_n)) .
  \end{equation*}
\end{corollary}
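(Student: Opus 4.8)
The plan is to deduce this rational-function statement from its Laurent-series counterpart, Theorem~\ref{thm:subst}, by peeling off the substitutions $x_j \mapsto g_j(x_j)$ one variable at a time. First I would fix, for all but finitely many primes $p$, a Laurent expansion of $f$ with respect to a vertex $\boldsymbol{v}$ of the Newton polytope of its denominator; by Corollary~\ref{cor:NQ:cone} this expansion lies in $\mathbb{Z}_p[[\boldsymbol{x}^{\pm 1}]]$ (for generic $p$) and is supported on a proper cone $C$. As in the proof of Theorem~\ref{thm:det:rat}, I would choose a linear form $\alpha$ with positive, $\mathbb{Q}$-linearly independent coefficients that is positive on the nonzero part of $C$; the positivity on each coordinate direction is what will supply the hypothesis $\alpha(1,\boldsymbol{0})>0$ needed at each step, and one checks that this positivity is preserved as the substitutions are carried out. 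By Proposition~\ref{prop:PQv} it is harmless to pick any convenient vertex, so I may also arrange that the relevant coefficients are $p$-adic units.

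For the iteration, set $G_0 = f$ and, for $1 \le j \le n$,
\[ G_j(\boldsymbol{x}) = \Bigl(\prod_{i=1}^j \frac{x_i g_i'(x_i)}{g_i(x_i)}\Bigr) f\bigl(g_1(x_1),\ldots,g_j(x_j),x_{j+1},\ldots,x_n\bigr), \]
so that $G_n$ is the asserted function. The key observation is that the factors $\tfrac{x_i g_i'}{g_i}$ with $i<j$ do not involve $x_j$, whence
\[ G_j = \frac{x_j g_j'(x_j)}{g_j(x_j)}\,\bigl(G_{j-1}\bigr)_{x_j \to g_j(x_j)}. \]
This is exactly the operation appearing in Theorem~\ref{thm:subst}, with $x_j$ in the role of the distinguished variable $z$ and the remaining variables in the role of $\boldsymbol{x}$. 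Hence, assuming $G_{j-1}$ satisfies the Gauss congruences for $p$ and that $g_j$ meets the hypotheses of Theorem~\ref{thm:subst}, the theorem yields the same for $G_j$, and induction on $j$ finishes the argument.

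The remaining work is to arrange that each $g_j$ fits the hypothesis $g_j \in x_j^{r}\,\mathbb{Z}_p[[x_j]]^{\times}$ with $r>0$. Writing $g_j = c_j x_j^{e_j} u_j(x_j)$ with $u_j(0)=1$ and $e_j$ the order of $g_j$ at the origin, for all but finitely many $p$ the constant $c_j$ is a $p$-adic unit and $u_j \in \mathbb{Z}_p[[x_j]]^{\times}$. If $e_j>0$ we are done. If $e_j<0$, I would conjugate by the toroidal substitution $x_j\mapsto 1/x_j$: since this has an invertible integer matrix, Proposition~\ref{prop:toroidal} shows it preserves the Gauss property, and a short computation shows it sends $\tfrac{x_j g_j'}{g_j}\,f(\ldots,g_j,\ldots)$ to minus the corresponding expression built from $\tilde g_j(x_j)=g_j(1/x_j)$, which now has order $-e_j>0$.

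The main obstacle is the case $e_j=0$, that is $g_j(0)\notin\{0,\infty\}$, where the formal substitution $\sum_\ell f_\ell\,g_j(x_j)^\ell$ fails to converge and Theorem~\ref{thm:subst} does not apply directly. When $g_j$ has unequal numerator and denominator degrees it has nonzero order at infinity, and $x_j\mapsto 1/x_j$ again reduces to the positive-order case. The genuinely hard situation is $g_j$ of equal degrees with nonzero constant term (for instance $g_j=(1+x_j)/(1-x_j)$, which has order $0$ at both $0$ and $\infty$): here $G_n$ is still a perfectly well-defined rational function with a legitimate Laurent expansion, but its coefficients must be tied to the Gauss congruences for $f$ by a separate argument. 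I expect this step to hinge on the fact that, although $g_j^\ell$ itself does not converge when $g_j(0)\neq 0$, each summand $z\frac{\md}{\md z}(g_j^{\ell})=\ell\,g_j^{\ell-1}\,z g_j'$ does have positive order in $z$; the route I would pursue is to refine the decomposition $F=F_1+F_2$ from the proof of Theorem~\ref{thm:subst} so that it applies term-by-term at this level, or else to establish the congruences by a limiting or specialization argument. This is where I anticipate the real difficulty to lie.
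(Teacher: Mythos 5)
Your overall strategy coincides with the paper's: reduce to a single substitution $f(z,\boldsymbol{x})\mapsto \frac{zg'(z)}{g(z)}f(g(z),\boldsymbol{x})$ applied one variable at a time (the factors $x_ig_i'/g_i$ for $i<j$ ride along harmlessly as part of the ``new $f$''), invoke Theorem~\ref{thm:subst} when $g(0)=0$, and use the toroidal flip $z\mapsto 1/z$ together with Proposition~\ref{prop:toroidal} when $g$ has a pole at the origin. However, you explicitly leave open the case $g(0)=c\in\mathbb{Q}^{\times}$, and the ideas you sketch for it (refining the $F_1+F_2$ decomposition term by term, or a limiting argument) are not developed and are not what closes the case. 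This is a genuine gap, and it is not confined to $g$ of equal numerator and denominator degrees: already $g(z)=z+1$ escapes both of your reductions, since $g(1/z)=1/z+1$ has a pole at $0$ and the pole case would just flip back. (A smaller inaccuracy: when $g$ has a pole at $0$, the order of $g(1/z)$ at $z=0$ is minus the order of $g$ at \emph{infinity}, not $-e_j$, so even your pole case silently assumes $g$ vanishes at infinity.)

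The paper closes the remaining case with a composition trick rather than any refinement of Theorem~\ref{thm:subst}. Writing $T_g(f)(z,\boldsymbol{x})=\frac{zg'(z)}{g(z)}f(g(z),\boldsymbol{x})$, the chain rule gives $T_h\circ T_g=T_{g\circ h}$, so the class of $g$ for which the statement holds is closed under composition. If $g(0)=c\neq 0$, then $g=A\circ g_0$ with $g_0=g-c$ vanishing at the origin and $A(w)=w+c$, so by the first case it suffices to treat $g(z)=z+c$; and $z+c=g_1(g_2(g_1(z)))$ with $g_1(z)=1/z$ (a toroidal substitution) and $g_2(z)=z/(1+cz)$ (which vanishes at $0$), both already handled. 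You would need to supply this, or some equivalent mechanism, to complete your argument.
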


\begin{proof}
  Clearly, it suffices to show that, if $f \in \mathbb{Q} (z, \boldsymbol{x})$
  has the Gauss property, then, for any nonzero $g \in \mathbb{Q} (z)$, the
  rational function
  \begin{equation*}
    F (z, \boldsymbol{x}) = \frac{z g' (z)}{g (z)} f (g (z), \boldsymbol{x})
  \end{equation*}
  has the Gauss property.
  
  First, consider the case that $g (0) = 0$. Let $P, Q \in \mathbb{Z} [z,
  \boldsymbol{x}]$ and suppose that $f = P / Q$ has the Gauss property. Observe
  that there exists a vertex $\boldsymbol{v}$ of $N (Q)$ and a linear form
  $\alpha$ with $\alpha (1, 0, \ldots 0) > 0$, such that $\alpha
  (\boldsymbol{w}) > 0$ for all nonzero $\boldsymbol{w}$ in the proper cone $C$
  generated by $N (Q / (z, \boldsymbol{x})^{\boldsymbol{v}})$. It follows from
  Corollary~\ref{cor:NQ:cone} that the Laurent series expansion of $f$ with
  respect to $\boldsymbol{v}$ is supported on $C$. Since adding a constant does
  not affect the Gauss property, we may assume that this series has constant
  term $0$. Hence, the assumptions of Theorem~\ref{thm:subst} are satisfied
  for all but finitely many primes $p$. We conclude that $F (z, \boldsymbol{x})$
  has the Gauss property.
  
  Next, suppose that $g (z)$ has a pole at $z = 0$. By
  Proposition~\ref{prop:toroidal}, $F (z, \boldsymbol{x})$ has the Gauss
  property if and only if $F (1 / z, \boldsymbol{x})$ has the Gauss property.
  Let $h (z) = g (1 / z)$, and note that
  \begin{equation*}
    F (1 / z, \boldsymbol{x}) = - \frac{z h' (z)}{h (z)} f (h (z),
     \boldsymbol{x}) .
  \end{equation*}
  Since $h (0) = 0$, it follows from the previous case that $F (1 / z,
  \boldsymbol{x})$ and, hence, $F (z, \boldsymbol{x})$ has the Gauss property.
  
  It therefore remains to consider the case $g (0) = c \in
  \mathbb{Q}^{\times}$. In light of the first case, it suffices to consider
  $g (z) = z + c$. Observe that $g (z) = g_1 (g_2 (g_1 (z)))$ with $g_1 (z) =
  1 / z$ and $g_2 (z) = z / (1 + c z)$. Since the result holds for $g_1$ and
  $g_2$, we conclude that it also holds for $g$.
\end{proof}

\section{A proof of Minton's result}\label{sec:minton}

In this section, we reprove the following result of Minton
\cite{minton-cong}.

\begin{theorem}[Minton, 2014]
  \label{thm:minton}Let $f \in \mathbb{Q} (x)$. Then the following are
  equivalent:
  \begin{enumerate}
    \item \label{i:minton:gauss}$f$ has the Gauss property.
    
    \item \label{i:minton:p}The coefficients $a_n$ of the Laurent series
    expansion $f = \sum a_n x^n$ satisfy
    \begin{equation*}
      a_{n p} \equiv a_n \pmod{p}
    \end{equation*}
    for almost all primes $p$.
    
    \item \label{i:minton:log}$f$ is $f (0)$ plus a $\mathbb{Q}$-linear
    combination of functions of the form $x u' (x) / u (x)$, where $u \in
    \mathbb{Q} [x]$ is irreducible and $u (0) = 1$.
  \end{enumerate}
\end{theorem}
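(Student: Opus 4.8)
I would establish the three conditions in the cycle (c) $\Rightarrow$ (a) $\Rightarrow$ (b) $\Rightarrow$ (c). The implication (c) $\Rightarrow$ (a) is immediate from Theorem~\ref{thm:det:rat} in the case $n=m=1$: each $x\,u'(x)/u(x)$ has the Gauss property, adjoining the constant $f(0)$ alters no coefficient of index $\geq 1$, and a $\mathbb{Q}$-linear combination of Laurent series satisfying the Gauss congruences for $p$ again does so as soon as the (finitely many) scalars lie in $\mathbb{Z}_p$, hence for all but finitely many $p$. The implication (a) $\Rightarrow$ (b) is just the case $r=1$ of the Gauss congruences. All the content therefore lies in (b) $\Rightarrow$ (c), and this is where I would concentrate.

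For (b) $\Rightarrow$ (c) I may assume $f$ is regular at the origin (a pole there would introduce a negative monomial into the expansion, which is excluded by the monomial test used at the very end). The plan is to read off the polar structure of $f$: over $\overline{\mathbb{Q}}$ there are finitely many nonzero, pairwise distinct $\beta_i$ (reciprocals of the poles) and nonzero polynomials $R_i$, with $\deg R_i + 1$ equal to the order of the corresponding pole, such that $a_n=\sum_i \beta_i^n R_i(n)$ for all large $n$. Fix a finite Galois extension $L/\mathbb{Q}$ containing all $\beta_i$ and the coefficients of the $R_i$, and restrict to the cofinite set of primes $p$ admitting a prime $\mathfrak p \mid p$ of $L$ at which every $\beta_i$ is a unit, the $\beta_i$ are distinct modulo $\mathfrak p$, and all data are $\mathfrak p$-integral. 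Writing $\sigma$ for a Frobenius at $\mathfrak p$, I would use $\beta_i^{\,p}\equiv\sigma(\beta_i)\pmod{\mathfrak p}$ and, since $np\equiv 0$, $R_i(np)\equiv R_i(0)\pmod{\mathfrak p}$; as $\sigma$ permutes the Galois-stable set $\{\beta_i\}$, the hypothesis $a_{np}\equiv a_n\pmod p$ becomes $\sum_i \beta_i^n\bigl(R_i(n)-R_{\sigma^{-1}(i)}(0)\bigr)\equiv 0 \pmod{\mathfrak p}$ for all large $n$.

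The heart of the argument is then a separation step. The functions $n\mapsto n^s\beta_i^n$ (distinct $\beta_i$, $0\le s<\max_i\deg R_i+1$) are linearly independent, and their confluent Vandermonde determinant is a fixed nonzero element of $L$; for all but finitely many $\mathfrak p$ it is invertible modulo $\mathfrak p$, so the displayed congruence forces every coefficient of $R_i(n)-R_{\sigma^{-1}(i)}(0)$ to vanish modulo $\mathfrak p$. Letting $p$ vary over an infinite set, the nonconstant coefficients of each $R_i$ vanish outright; hence every pole is simple and $R_i=c_i$ is constant. The congruence now reads $\sigma^{-1}(c_i)\equiv c_i\pmod{\mathfrak p}$, where I have used that the residues of $f\in\mathbb{Q}(x)$ are Galois-equivariant, $c_{\sigma(i)}=\sigma(c_i)$. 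As $p$ varies, Chebotarev makes $\sigma$ meet every conjugacy class of $\mathrm{Gal}(L/\mathbb{Q})$, and each relation, holding for infinitely many $\mathfrak p$, holds exactly; so the stabilizer of $c_i$ meets every conjugacy class and, since a proper subgroup of a finite group cannot do so (Jordan), equals the whole group. Thus $c_i\in\mathbb{Q}$ and is a single rational $c_O$ on each Galois orbit $O$ of the $\beta_i$, whose contribution $c_O\sum_{\beta\in O}\beta^n$ is $-c_O$ times the $x^n$-coefficient of $x\,u_O'/u_O$ with $u_O=\prod_{\beta\in O}(1-\beta x)$ irreducible and $u_O(0)=1$.

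To finish, I would set $\tilde f=f(0)+\sum_O(-c_O)\,x\,u_O'/u_O$ and note that $f-\tilde f$ has no poles and vanishes at the origin, so it is a polynomial $A$ with $A(0)=0$. Since $\tilde f$ is of the form (c) it satisfies (b), hence so does $A$; testing $a_{jp}\equiv a_j\pmod p$ on any monomial $e_jx^j$ of $A$ with $j\geq 1$ and $jp$ beyond the degree of $A$ gives $e_j\equiv 0\pmod p$ for almost all $p$, so $A=0$ and $f=\tilde f$ is of the form (c). The main obstacle I anticipate is precisely the separation step: one must run the Vandermonde inversion \emph{uniformly} over a cofinite set of primes—keeping the $\beta_i$ distinct and the confluent determinant invertible modulo $\mathfrak p$—and then marry it to the Galois descent (Chebotarev together with Jordan's covering theorem) in order to pass from congruences modulo infinitely many $\mathfrak p$ to the exact rationality and orbit-constancy of the residues.
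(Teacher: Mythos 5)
Your proof is correct, and its overall architecture coincides with the paper's: the cycle (c)$\Rightarrow$(a)$\Rightarrow$(b)$\Rightarrow$(c) with the first two implications handled exactly as you do, and all the content in (b)$\Rightarrow$(c) via the polar decomposition of $f$, simplicity of the poles, rationality of the residues, and Galois orbits assembling into $x\,u'/u$. The execution of (b)$\Rightarrow$(c) is genuinely different, though. The paper stays at the level of rational functions: it applies $U_p$ to the partial fraction expansion, uses the Lucas-type congruence $U_p\bigl(1/(1-\alpha x)^j\bigr)\equiv 1/(1-\alpha^p x)\pmod p$ to conclude that all poles are simple, and then derives $A_i^p\equiv A_i\pmod p$ from the Fermat congruence $f_k\equiv f_k^p$ applied to $f_k=\sum_i A_i\alpha_i^k$, invoking Frobenius's density theorem for rationality. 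You instead work on the coefficient sequence $a_n=\sum_i\beta_i^nR_i(n)$, twist by an explicit Frobenius element $\sigma$ at $\mathfrak{p}$, and perform the separation by inverting a confluent Vandermonde (Casoratian) determinant modulo $\mathfrak{p}$, finishing with Chebotarev and Jordan's covering theorem. Your route makes fully explicit a separation step the paper leaves implicit (matching polar parts at the $\alpha_i^p$ also requires the $\alpha_i$ to be distinct mod $p$), at the cost of the uniform invertibility of the Casoratian over a cofinite set of primes, which you rightly identify as the delicate point; the paper's route is shorter because the $U_p$ computation and the $f_k\equiv f_k^p$ trick package both halves into one-line congruences of rational functions. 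The two density arguments are interchangeable, since $A^p\equiv\sigma(A)\pmod{\mathfrak{p}}$ translates the paper's $A_i^p\equiv A_i$ into your $\sigma(c_i)\equiv c_i$; note also that once $\tau(c_i)=c_i$ holds for every $\tau$ realized as a Frobenius, Chebotarev already supplies every element of the Galois group, so the appeal to Jordan's theorem can be dispensed with.
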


Note that statement \ref{i:minton:p} concerns only congruences modulo primes.
By the theorem, this already implies the Gauss congruences modulo prime
powers.

\begin{proof}
  That \ref{i:minton:log} implies \ref{i:minton:gauss} is a consequence of the
  case $m = 1$ of Theorem~\ref{thm:det:rat}. Since \ref{i:minton:gauss}
  obviously implies \ref{i:minton:p}, it remains to show that \ref{i:minton:p}
  implies \ref{i:minton:log}.
  
  Write $f = P / Q$ with $P, Q \in \mathbb{Z} [x]$. Assume that the
  congruences \ref{i:minton:p} hold or, equivalently, that
  \begin{equation}
    U_p (P / Q) \equiv P / Q \quad (\operatorname{mod} p), \label{eq:minton:U}
  \end{equation}
  where $U$ is the operator introduced in \eqref{eq:U}. It follows as in the
  proof of Proposition~\ref{prop:NPNQ} (which only relied on the Gauss
  congruences modulo primes, not prime powers) that $N (P) \subseteq N (Q)$.
  This implies that $P / Q$ has no pole in $x = 0$ and that $\deg (P)
  \leq \deg (Q)$. Since adding a constant to $f$ does not affect the
  result, we may assume that $Q (0) = 1$ and that $\deg (P) < \deg (Q)$. Then,
  $P / Q$ has a partial fraction expansion of the form
  \begin{equation*}
    \frac{P}{Q} = \sum_{i = 1}^r \sum_{j = 1}^{m_i} \frac{A_{i j}}{(1 -
     \alpha_i x)^j},
  \end{equation*}
  where the $\alpha_i \in \bar{\mathbb{Q}}$ are distinct algebraic numbers,
  $m_i \geq 1$ and $A_{i j} \in \bar{\mathbb{Q}}$. We first show that
  $m_i = 1$. To that end, note that, for all sufficiently large prime numbers
  $p$, we have that, for all $i$ and $j$, the norms of $\alpha_i$ and $A_{i
  j}$ are $p$-adic units, the $\alpha_i$ are distinct modulo $p$, and $p >
  m_i$. Let $p$ be a prime number satisfying these conditions. Since $p >
  m_i$, we then have, for all $j = 1, 2, \ldots, m_i$,
  \begin{eqnarray*}
    U_p \left(\frac{1}{(1 - \alpha_i x)^j} \right) & = & U_p \left(\sum_{k
    \geq 0} \binom{k + j - 1}{j - 1} \alpha_i^k x^k \right)\\
    & = & \sum_{k \geq 0} \binom{p k + j - 1}{j - 1} \alpha_i^{p k}
    x^k\\
    & \equiv & \sum_{k \geq 0} \alpha_i^{p k} x^k = \frac{1}{1 -
    \alpha_i^p x} \quad (\operatorname{mod} p) .
  \end{eqnarray*}
  As a consequence, we see that $U_p (P / Q)$, modulo $p$, is equal to a
  rational function with simple poles. From \eqref{eq:minton:U} we conclude
  that $P / Q$ has only simple poles as well.
  
  From now on, we may therefore write $A_i = A_{i 1}$ and have
  \begin{equation*}
    \frac{P}{Q} = \sum_{i = 1}^r \frac{A_i}{1 - \alpha_i x}, \quad U_p \left(\frac{P}{Q} \right) \equiv \sum_{i = 1}^r \frac{A_i}{1 - \alpha_i^p x} .
  \end{equation*}
  Moreover, the $k$th coefficient of $P / Q = \sum_{k \geq 0} f_k x^k$ is
  $f_k = \sum_{i = 1}^r A_i \alpha_i^k$. Since $f_k \in \mathbb{Q}$ is a
  $p$-adic integer, we have
  \begin{equation*}
    f_k \equiv f_k^p \equiv \sum_{i = 1}^r A_i^p \alpha_i^{p k} \quad
     (\operatorname{mod} p),
  \end{equation*}
  which implies that
  \begin{equation*}
    \frac{P}{Q} \equiv \sum_{i = 1}^r \frac{A_i^p}{1 - \alpha_i^p x} \quad
     (\operatorname{mod} p) .
  \end{equation*}
  Since $P / Q \equiv U_p (P / Q) \pmod{p}$, we conclude
  that $A_i^p \equiv A_i \pmod{p}$, for all $i = 1, 2,
  \ldots, r$. From Frobenius's density theorem, see, for instance,
  \cite[p.~134]{janusz-anf}, it follows that $A_i \in \mathbb{Q}$ for all
  $i$.
  
  Finally, let us group the $\alpha_i$ in Galois orbits under $\operatorname{Gal}
  (\bar{\mathbb{Q}} /\mathbb{Q})$. Suppose, say, that $\alpha_1, \ldots,
  \alpha_s$ is such a Galois orbit. Since $P / Q$ is Galois invariant, and the
  $A_i$ are rational, we must have $A_1 = A_2 = \ldots = A_s$. Hence, we
  conclude that $P / Q$ is a rational linear combination of functions of the
  form
  \begin{equation*}
    \sum_{i = 1}^s \frac{1}{1 - \alpha_i x} = s - x \frac{v' (x)}{v (x)},
  \end{equation*}
  where $v (x) = \prod_{i = 1}^s (1 - \alpha_i x)$. Moreover, $v \in
  \mathbb{Q} [x]$ because $\alpha_1, \ldots, \alpha_s$ form a Galois orbit.
\end{proof}

\begin{acknowledgements}
The first and third author would like to thank
the Max-Planck-Institute for Mathematics in Bonn, where this work was
initiated, for providing support and wonderful working conditions.
\end{acknowledgements}


\begin{thebibliography}{Apé79}
  \bibitem[Ap\'e79]{apery}Roger Ap\'ery. {\newblock}Irrationalit\'e de
  $\zeta (2)$ et $\zeta (3)$. {\newblock}\textit{Ast\'erisque}, 61:11--13,
  1979.
  
  \bibitem[Beu85]{beukers-apery85}Frits Beukers. {\newblock}Some congruences
  for the Ap\'ery numbers. {\newblock}\textit{Journal of Number Theory},
  21(2):141--155, October 1985.
  
  \bibitem[Cos88]{coster-sc}Matthijs~J. Coster.
  {\newblock}\textit{Supercongruences}. {\newblock}PhD thesis, Universiteit
  Leiden, 1988.
  
  \bibitem[Ges14]{monthly-11757}Ira Gessel. {\newblock}Problem 11757.
  {\newblock}\textit{American Mathematical Monthly}, 121(2):170, February
  2014.
  
  \bibitem[GKZ94]{gkz-det}I.~M. Gelfand, M.~M. Kapranov, and A.~V. Zelevinsky.
  {\newblock}\textit{Discriminants, Resultants, and Multidimensional
  Determinants}. {\newblock}Birkh{\"a}user, Boston, MA, 1994.
  
  \bibitem[Jan73]{janusz-anf}Gerald~J. Janusz. {\newblock}\textit{Algebraic
  Number Fields}. {\newblock}Academic Press, New York, 1973.
  
  \bibitem[Min14]{minton-cong}Gregory~T. Minton. {\newblock}Linear recurrence
  sequences satisfying congruence conditions. {\newblock}\textit{Proceedings
  of the American Mathematical Society}, 142(7):2337--2352, April 2014.
  
  \bibitem[RY15]{ry-diag13}Eric Rowland and Reem Yassawi. {\newblock}Automatic
  congruences for diagonals of rational functions.
  {\newblock}\textit{Journal de Th\'eorie des Nombres de Bordeaux},
  27(1):245--288, 2015.
  
  \bibitem[Str14]{s-apery}Armin Straub. {\newblock}Multivariate Ap\'ery
  numbers and supercongruences of rational functions.
  {\newblock}\textit{Algebra \& Number Theory}, 8(8):1985--2008, 2014.
  
  \bibitem[Zar08]{zarelua-tr}A.~V. Zarelua. {\newblock}On congruences for the
  traces of powers of some matrices. {\newblock}\textit{Proceedings of the
  Steklov Institute of Mathematics}, 263(1):78--98, December 2008.
\end{thebibliography}
\end{document}